\date{\today}
\newcommand{\distas}[1]{\mathbin{\overset{#1}{\kern\z@\sim}}}
\newcommand{\distras}[1]{
  \mathbin{\overset{#1}{\kern\z@\resizebox{\wd\mybox}{\ht\mysim}{$\sim$}}}
}
\theoremstyle{theorem}
    \newtheorem{theorem}{Theorem}
    \newtheorem{lemma}[theorem]{Lemma}
    \newtheorem{proposition}[theorem]{Proposition}
    \newtheorem{corollary}[theorem]{Corollary}
\theoremstyle{definition} 
    \newtheorem{definition}[theorem]{Definition}
    \newtheorem{result}[theorem]{Result}
    \newtheorem{remark}[theorem]{Remark}
    \newtheorem{example}[theorem]{Example}
    \newtheorem{exercise}[theorem]{Exercise}
\def\suchthat{\; : \;}
\def\sig{\sigma}
\def\tends{\rightarrow}
\def\l{\left}
\def\r{\right}
\def\<{\langle}
\def\>{\rangle}
\newcommand{\E}{\mbox{E}}
\def\P{\mbox{P}}
\newcommand\Tr{{\mbox{Tr}}}
\newcommand\mnote[1]{} 
\newcommand\be{\begin{equation*}}
\newcommand\ee{\end{equation*}}
\newcommand\ben{\begin{equation}}
\newcommand\een{\end{equation}}
\newcommand\bes{\begin{eqnarray*}}
\newcommand\ees{\end{eqnarray*}}
\newcommand\bex{\begin{exercise}}
\newcommand\eex{\end{exercise}}
\newcommand\beg{\begin{example}}
\newcommand\eeg{\end{example}}
\newcommand\benu{\begin{enumerate}}
\newcommand\eenu{\end{enumerate}}
\newcommand\beit{\begin{itemize}}
\newcommand\eeit{\end{itemize}}
\newcommand\berk{\begin{remark}}
\newcommand\eerk{\end{remark}}
\newcommand\bdefn{\begin{defintion}}
\newcommand\edefn{\end{definition}}
\newcommand\bthm{\begin{theorem}}
\newcommand\ethm{\end{theorem}}
\newcommand\bprf{\begin{proof}}
\newcommand\eprf{\end{proof}}
\newcommand\blem{\begin{lemma}}
\newcommand\elem{\end{lemma}}
\newcommand{\Cov}{\mbox{\rm Cov}}
\newcommand{\sm}{{\raise0.3ex\hbox{$\scriptstyle \setminus$}}}
\def\l{\left}
\def\r{\right}
\def\sig{\sigma}
\def\tends{\rightarrow}
\def\CHI{\mathchoice%
{\raise2pt\hbox{$\chi$}}%
{\raise2pt\hbox{$\chi$}}%
{\raise1.3pt\hbox{$\scriptstyle\chi$}}%
{\raise0.8pt\hbox{$\scriptscriptstyle\chi$}}}
\def\smalloplus{\raise1pt\hbox{$\,\scriptstyle \oplus\;$}}
\title[Fluctuations of reverse circulant and symmetric circulant matrices]{Time dependent fluctuations of  linear eigenvalue statistics of some patterned matrices
}
\author{Arup Bose}
\address{Statistics and Mathematics Unit\\
Indian Statistical Institute \\
203 B. T. Road, Kolkata 700108, India}
\email{bosearu@gmail.com }
\author{Shambhu Nath Maurya}
\address{Department of Mathematics\\
	Indian Institute of Technology Bombay\\
	Powai, Mumbai, Maharashtra 400076, India}
\email{snmaurya [at] math.iitb.ac.in}
\author{Koushik Saha}
\address{Department of Mathematics\\
        Indian Institute of Technology Bombay\\
         Powai, Mumbai, Maharashtra 400076, India}
\email{koushik.saha [at] iitb.ac.in}
\date{\today}
\thanks{The work of Shambhu Nath Maurya is partially supported by UGC Doctoral Fellowship, India and the work of Koushik Saha is partially supported by MATRICS grant of SERB, Department of Science and Technology, Government of India. The work of Arup Bose is supported by J.C. Bose National Fellowship, Government of India.}
\begin{document}

\begin{abstract}
Consider the $n \times n$  reverse circulant $RC_n(t)$  and symmetric circulant $SC_n(t)$ matrices with independent Brownian motion entries. We discuss the process convergence of the time dependent fluctuations of linear eigenvalue statistics of these matrices 
as $n \tends \infty$, when the test functions of the statistics are polynomials. The proofs are mainly combinatorial, based on the trace formula,
method of moments and some results on process convergence.
\end{abstract}

\maketitle

\noindent{\bf Keywords :} Linear eigenvalue statistics, reverse circulant matrix, symmetric circulant matrix, Brownian motion, weak convergence, process convergence, method of moments, trace formula,  Gaussian process.
\section{ introduction and main results}
Let $M_n$ be an $n\times n$ random matrix with real or complex entries and $\phi$ is some  fixed function. 
The linear statistics of eigenvalues $\lambda_1,\lambda_2,\ldots, \lambda_n$ of $M_n$ is then defined as 
\begin{equation} \label{eqn:0}
\tau_n(\phi) = \frac{1}{n}\sum_{k=1}^{n}\phi(\lambda_k).
\end{equation}
 The fluctuation of $\tau_n(\phi)$ was first studied by Arharov \cite{arharov} in 1971 for sample covariance matrices. Since then, this has been studied for various random matrices. For results on Wigner and sample covariance matrices,  see  \cite{johansson1998}, \cite{bai2004clt}, \cite{lytova2009central}, \cite{shcherbina2011central} and the references therein.  For Toeplitz and related matrices, see  \cite{chatterjee2009fluctuations}, \cite{liu2012}, \cite{li_sun_2015}, \cite{maurya2019process} and \cite{m&s_toeplitz_2020}, and for band and sparse  random matrices, see  \cite{anderson2006clt}, \cite{li2013central}, \cite{jana2014} and \cite{shcherbina2015}. 

There are different techniques for studying the fluctuations of $\tau_n(\phi)$. One  obvious way is by using the trace formula of the matrices. Note that if the test function $\phi$ is a polynomial, then 
\begin{equation*} \label{eqn:1}
\tau_n(\phi) = \frac{1}{n}\sum_{k=1}^{n} \Tr[\phi(M_n)].
\end{equation*}
So, if a closed form of $\Tr[\phi(M_n)]$
is known then one can exploit it to study the limiting behaviour of $\tau_n(\phi)$.
 In articles \cite{soshnikov1998tracecentral}, \cite{liu2012}, \cite{li_sun_2015}, \cite{adhikari_saha2017}, \cite{adhikari_saha2018}, \cite{m&s_toeplitz_2020}, the authors have studied the fluctuations of $\tau_n(\phi)$ by using the trace formula of the matrices.

Suppose $M_n(t)$ is a time dependent random matrix with $t \in T$, where $T$ is an index set. Then the linear statistics of eigenvalues of $M_n(t)$ will also be time dependent, say $\tau_n(\phi, t)$.
So, the obvious questions that arise are about the joint fluctuations and/or tightness of the processes $\{\tau_n(\phi, t); t\in T\}_{n\geq 1}$. 

In \cite{li_sun_2015}, Li and Sun have studied the joint fluctuations of the processes $\{\tau_n(\phi, t); t\geq 0\}_{n\geq 1}$ 
for band Toeplitz matrices but have not addressed  the tightness issues. Tightness was subsequently addressed 
in \cite{m&s_toeplitz_2020}. 

The reverse circulant and symmetric circulant matrices are defined as follow:
\vskip5pt
\noindent \textbf{Reverse circulant matrix:} An $n\times n$ matrix with (i,j)-th element as $x_{(i+j-2) \bmod n}$ is called reverse circulant matrix. The time dependent reverse circulant matrix is defined as
\begin{equation*}\label{def:RC_n(t)}
RC_n(t)=\frac{1}{\sqrt n}\left(\begin{array}{cccccc}
b_0(t) & b_1(t) & b_2(t) & \cdots & b_{n-2}(t) & b_{n-1}(t) \\
b_{1}(t) & b_2(t) & b_3(t) & \cdots & b_{n-1}(t) & b_{0}(t)\\
\vdots & \vdots & {\vdots} & \vdots & {\vdots} & \vdots \\
b_{n-1}(t) & b_0(t) & b_1(t) & \cdots & b_{n-3}(t) & b_{n-2}(t)
\end{array}\right).
\end{equation*}
Note that for $j=1,2,\ldots, n-1$, its $(j+1)$-th row is obtained by giving its $j$-th row a left circular shift by one positions. 
\vskip5pt
\noindent\textbf{Symmetric circulant matrix:} An $n\times n$ matrix with (i,j)-th element as $x_{\frac{n}{2}-|\frac{n}{2}-|i-j||}$ is called symmetric circulant matrix. The time dependent symmetric circulant matrix is defined as
\begin{equation*}\label{def:SC_n(t)}
SC_n(t)=\frac{1}{\sqrt n}\left(\begin{array}{cccccc}
b_0(t) & b_1(t) & b_2(t) & \cdots & b_{2}(t) & b_{1}(t) \\
b_{1}(t) & b_0(t) & b_1(t) & \cdots & b_{3}(t) & b_{2}(t)\\
\vdots & \vdots & {\vdots} & \vdots & {\vdots} & \vdots \\
b_1(t) & b_2(t) & b_3(t) & \cdots & b_{1}(t) & b_{0}(t)
\end{array}\right).
\end{equation*}
For $j=1,2,\ldots, n-1$, its $(j+1)$-th row is obtained by giving its $j$-th row a right circular shift by one positions. 

In this article we study, for these two matrices,  the joint fluctuation and tightness of $\{\tau_n(\phi, t); t\geq 0\}_{n\geq 1}$
with polynomial test functions. We assume that $\{b_n(t);t\geq 0\}_{n\geq 0}$ is a sequence of independent standard Brownian motions. 

For $RC_n(t)$ with $\phi(x)=x^{2p}$, $p\geq 1$, observe that
$$ \sum_{k=1}^{n} \phi (\lambda_k(t))= \sum_{k=1}^{n}(\lambda_k(t))^{2p}= \Tr(RC_n(t))^{2p},$$
where $\lambda_1(t),\lambda_2(t),\ldots,\lambda_n(t)$ are the eigenvalues of $RC_n(t)$. We scale and centre $\Tr(RC_n(t))^{2p}$, to study its fluctuation, and define 
\begin{equation}\label{eqn:w_p(t)_rp}
   w_{p}(t) := \frac{1}{\sqrt{n}} \bigl\{ \Tr(RC_n(t))^{2p} - \E[\Tr(RC_n(t))^{2p}]\bigr\}. 
   \end{equation}
Note that here we have considered only even degree monomials as test functions for $RC_n$. Odd degree monomials will be dealt separately. See Remark \ref{rem:RC_odd}.  

Similarly, for $SC_n(t)$ with $\phi(x)=x^{p}$, $p\geq 1$, define
\begin{equation}\label{eqn:w_p(t)_sp}
   \eta_p(t) := \frac{1}{\sqrt{n}} \bigl\{ \Tr(SC_n(t))^p - \E[\Tr(SC_n(t))^p]\bigr\}. 
   \end{equation}
We have suppressed the  dependence of $w_p(t)$ and $\eta_p(t)$  on $n$ to keep the notation simple. 

In our first result, we calculate the covariance between $w_p(t_1)$ and $w_q(t_2)$ as $n \tends \infty$. 
\begin{theorem}\label{thm:revcovar_rp} 
For $0<t_1\leq t_2$ and $p,q\geq 1$, 
\begin{equation}\label{eqn:cov_rp}
 \lim_{n\to \infty} \Cov\big(w_{p}(t_1),w_{q}(t_2)\big)  = \sum_ {r'=1}^{q} \binom{2q}{2r'} (t_1)^{p+r'} (t_2-t_1)^{q-r'} \Big\{\sum_{k=1}^{\min\{p,r'\}}  c_k g(k) - c_1\Big\} .
 \end{equation}
where
\begin{align}\label{eqn:c_k,g(k)_rp}
 c_k & =\l(\binom{p}{p-k}^2(p-k)! \binom{r'}{r'-k}^2(r'-k)!\binom{q}{q-r'}^2(q-r')!\r),\\
 g(k) & =\frac{1}{(2k-1)!}\sum_{s=-(k-1)}^{k-1}\sum_{j=0}^{k+s-1}(-1)^j\binom{2k}{j}(k+s-j)^{2k-1}(2-{\bf 1}_{\{s=0\}})k!k!. \nonumber
\end{align}
\end{theorem}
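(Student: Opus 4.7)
The plan is to compute $\Cov(w_p(t_1), w_q(t_2))$ by a trace-formula plus Gaussian Wick-calculus approach, combined with the independent-increment decomposition of Brownian motion. Using
\[
\Tr(RC_n(t))^{2p} = \frac{1}{n^p} \sum_{i_1, \ldots, i_{2p}} \prod_{l=1}^{2p} b_{(i_l + i_{l+1} - 2) \bmod n}(t)
\]
with the cyclic convention $i_{2p+1} := i_1$, the quantity $\Cov(w_p(t_1), w_q(t_2))$ becomes $\frac{1}{n}$ times an $n$-indexed sum of joint Gaussian cumulants. The first structural move is to write $b_j(t_2) = b_j(t_1) + \tilde b_j$, where $\tilde b_j := b_j(t_2) - b_j(t_1)$ is independent of $\sigma(\{b_m(t_1)\}_m)$ and of variance $t_2 - t_1$. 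Expanding each of the $2q$ factors in $\Tr(RC_n(t_2))^{2q}$ this way and grouping by the number $2r'$ of retained ``old'' factors (so that $2(q - r')$ are $\tilde b_j$), the covariance with $\Tr(RC_n(t_1))^{2p}$ survives only on terms in which those $2r'$ old factors pair with the $2p$ factors of the first trace. The binomial $\binom{2q}{2r'}$ in \eqref{eqn:cov_rp} records the choice of which $2r'$ positions among $\{1, \ldots, 2q\}$ carry the $b_j(t_1)$ contribution.

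Next I apply Wick's formula to the resulting Gaussian expectations. Each surviving pair partition of the $2p + 2r'$ ``time-$t_1$'' factors, together with the $q - r'$ internal pairings of the $\tilde b_j$'s, contributes the numerical weight $(t_1)^{p + r'}(t_2 - t_1)^{q - r'}$. The reverse-circulant structure forces every pair of matched positions $(a, b)$ to obey $i_a + i_{a+1} \equiv i_b + i_{b+1} \pmod{n}$. A dimension count against the overall $n^{-(p + q) - 1}$ normalisation then isolates exactly the ``cross-symmetric'' pair partitions that bridge the two traces as the contributors at leading order; these are parametrised by the number $k$ of bridge-pairs, with $1 \le k \le \min(p, r')$, matching the summation range in \eqref{eqn:cov_rp}.

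Enumerating these bridged symmetric pair partitions produces the constant $c_k$: the factor $\binom{p}{p - k}^2 (p - k)!$ counts the choice of $k$ bridge endpoints inside the length-$2p$ cyclic word and the symmetric intra-word matching of the remaining $2(p - k)$ positions; $\binom{r'}{r' - k}^2 (r' - k)!$ plays the analogous role for the $2r'$ retained positions of the second trace; and $\binom{q}{q - r'}^2 (q - r')!$ handles the $\tilde b$ part. The function $g(k)$ then emerges from the asymptotic count of solutions to the $k$ cross-modular equations: after absorbing a $(k!)^2$ factor for bridge orderings and rescaling $i/n \to x \in [0, 1]$, the count reduces to the total volume of the slices of $[0, 1]^{2k}$ at integer sum-levels $k + s$ for $s \in \{-(k-1), \ldots, k-1\}$, and the inner alternating sum $\tfrac{1}{(2k-1)!}\sum_j (-1)^j \binom{2k}{j}(k + s - j)^{2k - 1}$ is the standard Ehrhart-type inclusion-exclusion formula for one such slice volume, while the weight $2 - \mathbf{1}_{\{s = 0\}}$ reflects folding of the symmetric pair of levels $\pm s$. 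Finally, the $-c_1$ correction absorbs the degenerate $k = 1$ contribution: since one verifies directly that $g(1) = 1$ and hence $c_1 g(1) - c_1 = 0$, this subtraction encodes the fact that a single bridge-pair does not genuinely couple the two traces once the centering in the definition of $w_p$, $w_q$ is accounted for.

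The main obstacle will be the combinatorial identification of the dominant pair partitions and the precise matching of the count to the algebraic forms of $c_k$ and $g(k)$. Enumerating symmetric pair partitions on a cyclic word of even length is intricate because one must cleanly separate the factorisable configurations (which vanish from the covariance after centering) from the truly cross-linked ones; the passage from the modular constraints to the simplex-volume formula $g(k)$ requires a careful Ehrhart-style bookkeeping, particularly for the boundary case $s = 0$, where the single-sided versus double-sided count produces the asymmetric weight $2 - \mathbf{1}_{\{s = 0\}}$.
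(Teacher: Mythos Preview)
Your proposal follows essentially the same route as the paper: the independent-increment decomposition $b_j(t_2)=b_j(t_1)+\tilde b_j$, the binomial expansion of the $2q$ factors grouped by the number $2r'$ of ``old'' ones, the identification of the leading contributions by the number $2k$ of cross-matches between the two traces, and the interpretation of $-c_1$ as the subtraction coming from the product of expectations (the centering). Two points of precision to watch. First, the paper works not with your matrix-index trace expansion but with the symbol-index constraint set $A_{2p}=\{(i_1,\dots,i_{2p}):\sum_k(-1)^k i_k\equiv 0\pmod n\}$; in that parametrisation the dominant pair-matchings are exactly the \emph{odd-even pair matched} configurations (each value occurring once at an odd and once at an even position), and this is what forces the constraint to become free---your ``symmetric intra-word matching'' must be made precise in exactly this way. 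Second, your overall normalisation $n^{-(p+q)-1}$ is off by a sign in the exponent: it should be $n^{-(p+q-1)}$, matching the paper's $\frac{1}{n^{p+q-1}}$ in Lemma~\ref{lem:w_p_rp}.
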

The following theorem describes the finite dimensional joint convergence of $\{w_{p}(t);p\geq 1\}$. 

\begin{theorem}\label{thm:revmulti_rp} 
	 Suppose  $\{p_1, p_2, \ldots, p_r\}\subset \mathbb N$ with $ p_i\geq  1$ for $1\leq i\leq r$ and  $0<t_1<t_2 \cdots <t_r$. Then 
	\begin{equation*}
	(w_{p_1}(t_1), w_{p_2}(t_2), \ldots , w_{p_r}(t_r)) \stackrel{\mathcal D}{\rightarrow} (N_{p_1}(t_1), N_{p_2}(t_2), \ldots , N_{p_r}(t_r)),\ \text{as}\ n \to \infty,
	\end{equation*}
	where 	$\{N_{p}(t);t\geq 0, p\geq 1\}$ are Gaussian processes with mean zero and covariance structure as described in the right side of \eqref{eqn:cov_rp}.
\end{theorem}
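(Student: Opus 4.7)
The plan is to prove joint moment convergence and invoke the method of moments. Because each $w_p(t)$ is a polynomial in the jointly Gaussian family $\{b_m(s)\}_{m\ge 0,\,s\ge 0}$, it has finite moments of all orders, and the limiting Gaussian vector $(N_{p_j}(t_j))_j$ is moment-determinate. It therefore suffices to prove that, for every choice of non-negative integers $k_1,\ldots,k_r$,
\[
\E\Big[\prod_{j=1}^{r} w_{p_j}(t_j)^{k_j}\Big]\ \longrightarrow\ \E\Big[\prod_{j=1}^{r} N_{p_j}(t_j)^{k_j}\Big]\qquad (n\to\infty).
\]
Since $(N_{p_j}(t_j))$ is centred Gaussian, Wick's (Isserlis') formula rewrites the right-hand side as a sum over pair partitions $\pi$ of the label set $S=\{(j,\ell):1\le j\le r,\ 1\le\ell\le k_j\}$ of products of pairwise covariances $\Cov(N_{p_j}(t_j),N_{p_{j'}}(t_{j'}))$, which are supplied by Theorem~\ref{thm:revcovar_rp}.

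For the left-hand side, expand each trace via the trace formula
\[
\Tr(RC_n(t_j))^{2p_j}=\frac{1}{n^{p_j}}\!\sum_{i^{(j)}_1,\ldots,i^{(j)}_{2p_j}=1}^{n}\!\!\prod_{\ell=1}^{2p_j}b_{\alpha(\ell,j)}(t_j),\qquad \alpha(\ell,j)=\bigl(i^{(j)}_\ell+i^{(j)}_{\ell+1}-2\bigr)\bmod n,
\]
with $i^{(j)}_{2p_j+1}=i^{(j)}_1$. Substituting into the definition of $w_{p_j}(t_j)$, taking products across $j$ and the $k_j$ copies, and expanding the centring yields a sum over index tuples of an expectation of a product of centred jointly Gaussian variables with covariance $\Cov(b_m(s),b_{m'}(t))=\delta_{m,m'}\min(s,t)$. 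Applying Wick's theorem once more reduces each such expectation to a sum over pair partitions of the $\sum_j 2p_j k_j$ Brownian labels, and the centring in \eqref{eqn:w_p(t)_rp} forces the \emph{induced} partition on trace-copies to have no singleton blocks.

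It then remains to identify which pair partitions survive in the limit. The key claims are: first, that only pair partitions whose induced partition on trace-copies consists of disjoint pairs of copies survive as $n\to\infty$, while any configuration that entangles three or more trace-copies costs a factor of $n^{-1}$ or smaller via an extra mod-$n$ link constraint; second, that for a pair partition which couples trace-copies $(j,\ell')$ and $(j',\ell'')$ block by block, the contribution factorises across blocks and each block contributes $\lim_{n\to\infty}\Cov(w_{p_j}(t_j),w_{p_{j'}}(t_{j'}))$, which is given by Theorem~\ref{thm:revcovar_rp}. Summing over all such coupled pair partitions exactly reproduces the Wick decomposition of the Gaussian right-hand side.

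The main obstacle is the first claim: ruling out the entangled Wick pair partitions that connect three or more trace-copies. This extends the index-counting analysis underlying Theorem~\ref{thm:revcovar_rp} from two copies to arbitrarily many, and is where the circulant link $i+j-2\pmod n$ does the work: any entanglement of three or more copies forces an extra independent mod-$n$ constraint on the combined indices, reducing the number of free indices by at least one, so that the resulting contribution vanishes in the limit under the $n^{-1/2}$ normalisation per trace-copy. Once this combinatorial reduction is in place, the matching with the Gaussian Wick decomposition is automatic and the theorem follows.
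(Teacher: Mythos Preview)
Your proposal is correct and follows essentially the same route as the paper: method of moments, trace expansion, and the reduction to pairwise covariances via the key fact that configurations entangling three or more trace-copies are $o(1)$, after which Wick's formula matches the Gaussian limit. The paper organizes this slightly differently---it uses the simplified trace parametrization $\Tr(RC_n)^{2p}=n\sum_{A_{2p}}x_{i_1}\cdots x_{i_{2p}}$ with the alternating-sign constraint on $A_{2p}$, and it imports the cluster-counting bound you flag as the ``main obstacle'' as a black box (Result~\ref{res:cluster_rp}, Lemma~13 of \cite{RC_independent_2020}) rather than proving it inline.
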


Inspired by the results in \cite{maurya2019process} and \cite{m&s_toeplitz_2020}, 
the following theorem describes the process convergence of $\{ w_{p}(t) ; t \geq 0\}$ for  $p \geq 1$.
 \begin{theorem}\label{thm:revprocess}
	Suppose $ p\geq 1 $. Then as $n\to\infty$
	\begin{equation*}
	\{ w_{p}(t) ; t \geq 0\} \stackrel{\mathcal D}{\rightarrow} \{N_{p}(t) ; t \geq 0\}, 
	\end{equation*}
	where $\{N_{p}(t);  t \geq 0\}$  is defined as in Theorem \ref{thm:revmulti_rp} and $\stackrel{\mathcal D}{\rightarrow}$ denotes the weak convergence of probability measures on $(C_\infty,\mathcal C_\infty)$, see \eqref{weak_convergence_C_infinity}.
\end{theorem}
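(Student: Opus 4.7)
The proof has two parts: (i) convergence of finite-dimensional distributions, and (ii) tightness of the family of paths. Part (i) is exactly the content of Theorem \ref{thm:revmulti_rp}, so the main remaining task is tightness. Because $(C_\infty, \mathcal C_\infty)$ carries the topology of uniform convergence on compacta, it suffices to establish tightness of $\{w_p(t); t\in[0,T]\}_{n\geq 1}$ in $C([0,T])$ for each $T>0$. For this I would verify the Kolmogorov--Chentsov moment criterion
\begin{equation*}
\E\bigl[(w_p(t_2) - w_p(t_1))^{4}\bigr] \leq C_{T,p}\,(t_2 - t_1)^{2},\qquad 0\leq t_1\leq t_2\leq T,
\end{equation*}
with $C_{T,p}$ independent of $n$; the exponent $2>1$ then yields tightness.

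To establish the moment bound I would exploit the independent-increments structure of Brownian motion. Setting $\Delta_k := b_k(t_2)-b_k(t_1) \sim N(0,t_2-t_1)$, independent of $\mathcal F_{t_1}$, one has the matrix decomposition $RC_n(t_2) = RC_n(t_1) + E_n$, where $E_n$ is the reverse circulant built from $\{\Delta_k\}_{k=0}^{n-1}$ and is independent of $RC_n(t_1)$. Expanding the $2p$-th power gives
\begin{equation*}
\Tr(RC_n(t_2))^{2p} - \Tr(RC_n(t_1))^{2p} = \sum_{\w} \Tr(\w),
\end{equation*}
where $\w$ ranges over all non-commutative words of length $2p$ in the two matrices $RC_n(t_1)$ and $E_n$ that contain at least one $E_n$-factor. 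Hence $w_p(t_2) - w_p(t_1) = \tfrac{1}{\sqrt n}\sum_{\w}\{\Tr(\w) - \E\Tr(\w)\}$.

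For the fourth moment of this quantity, I would combine the reverse-circulant trace formula used to derive Theorem \ref{thm:revcovar_rp} with the independence of $RC_n(t_1)$ and $E_n$ and with Wick's rule, decomposing each joint expectation into a product over pair-partitions of the Gaussian indices. Each $\Delta$-pair contributes a factor $(t_2-t_1)$, and since every word carries at least one $E_n$-factor, a $4$-fold product of words has at least four $E_n$-factors and thus accrues at least $(t_2-t_1)^{2}$ after pairing. The centering cancels all disconnected pair-partitions across the four words in the usual CLT manner, so the surviving sum is a combinatorial index count which, by the same graph-theoretic reasoning as in the variance asymptotics of Theorem \ref{thm:revcovar_rp}, is at most $O(n^{2})$. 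The $1/n^{2}$ prefactor then delivers the desired bound $C_{T,p}(t_2-t_1)^{2}$.

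The main obstacle is this last step: the careful graph-theoretic classification of the admissible pair-partitions on the enlarged Gaussian family $\{b_k(t_1),\Delta_k\}$, showing that the leading-order fourth-moment contribution is exactly $O(n^{2}(t_2-t_1)^{2})$ with sub-leading corrections negligible in either $n$ or $(t_2-t_1)$. This essentially recycles the machinery developed for Theorem \ref{thm:revcovar_rp}, but applied to a fourth-order moment in a doubled index set and with the additional bookkeeping needed to track how the non-trivial $E_n$-entries distribute across the four words. Once the moment bound is in hand, Kolmogorov--Chentsov delivers tightness on each $C([0,T])$, and combined with Theorem \ref{thm:revmulti_rp} this yields the claimed weak convergence in $(C_\infty,\mathcal C_\infty)$.
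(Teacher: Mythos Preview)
Your proposal is correct and follows essentially the same route as the paper: finite-dimensional convergence from Theorem \ref{thm:revmulti_rp}, then tightness via the Kolmogorov moment bound $\E[(w_p(t)-w_p(s))^4]\le M_T(t-s)^2$, obtained by writing $RC_n(t)=RC_n(s)+E_n$ with $E_n$ built from the independent increments, expanding the $2p$-th power, and counting. Two minor differences worth noting: the paper extracts the factor $(t-s)^2$ by rescaling each Gaussian increment by $\sqrt{t-s}$ up front (so the remaining combinatorics involve only standard normals), rather than tracking it through Wick pairings as you do; and the $O(n^2)$ bound you need is not delivered by the $\ell=2$ variance analysis of Theorem \ref{thm:revcovar_rp} alone---when all four index vectors form a single cluster one must invoke the separate estimate $|B_{P_4}|=o(n^{4p-2})$ of Result \ref{res:cluster_rp}, which is precisely what the paper does in its Case III.
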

\begin{remark} \label{rem:RC_odd}
In the above theorems for $RC_n(t)$,
we have considered only even degree monomials. 
The fluctuation of linear eigenvalue statistics of $RC_n(t)$ with odd degree monomial test function is quite  different from the even degree monomial test functions. For odd degree monomial the limit is not Gaussian except for the one degree monomial, $\phi(x)=x$. We have discussed the fluctuations for odd degree monomial and polynomial with odd degree terms  after the proof of the above theorems.  
\end{remark}
Now we state our main results for $SC_n(t)$. The following theorem describes the covariance structure of $\eta_p(t_1)$ and $\eta_q(t_2)$, as $n\to\infty$.
\begin{theorem}\label{thm:symcovar_sp} 
For $0<t_1\leq t_2$ and $p,q\geq 2$, 
\begin{align} \label{eqn:cov_sp}
	\lim_{n\to\infty} & \Cov\big(\eta_p(t_1),\eta_q(t_2)\big) \nonumber \\ 
	& =  \left\{\begin{array}{ll} 	 
		 	\displaystyle \sum_ {r=2,4,}^{q} \binom{q}{r}  t_1^{ \frac{p+r}{2}} (t_2-t_1)^{ \frac{q-r}{2}} \Big\{  \frac{2a_1} {2^{\frac{p+q-4}{2}}  } + \\
		 	\qquad \displaystyle + \sum_{m=2}^{ \min\{ \frac{p}{2},\frac{r}{2} \} } \Big( \frac{a_m}{2^{\frac{p+r-4m}{2}}  } \sum_{s=0}^{2m}\binom{2m}{s}^2 s!(2m-s)! \ h_{2m}(s) \Big) \Big\} & \text{if}\ p, \mbox{} q \mbox{ both are even},\\\\
		 	\displaystyle \sum_ {r=1,3,}^{q} \binom{q}{r}  t_1^{ \frac{p+r}{2}} (t_2-t_1)^{ \frac{q-r}{2}}  \sum_{m=0}^{ \min\{ \frac{p-1}{2},\frac{r-1}{2} \} }  \frac{b_m}{2^{\frac{p+r-4m-2}{2}}  } \\
		 \qquad	\displaystyle \sum_{s=0}^{2m+1} \Big( \binom{2m+1}{s}^2 s!(2m+1-s)! \ h_{2m+1}(s) \Big)  \\
		  \displaystyle \qquad +  \frac{pq}{2^{(\frac{p+q}{2}-1)}} \sum_ {r=0,2,}^{q-1} \binom{q-1}{r}  t_1^{ \frac{p+1+r}{2}} (t_2-t_1)^{ \frac{q-1-r}{2}} d_r
		 & \text{if}\ p, \mbox{} q \mbox{ both are odd,} \\\\	 	
			0 & \text{otherwise}, 	 
		 	  \end{array}\right.	
	\end{align}	
where $a_m, b_m, h_m(s)$ and $d_{r}$ are appropriate constants, will be given in the proof of Theorem \ref{thm:symcovar_sp}.
\end{theorem}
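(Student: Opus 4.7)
The plan is the standard trace-plus-moment method. Starting from
\be
\Tr(SC_n(t)) ^p \;=\; \frac{1}{n^{p/2}} \sum_{1\le i_1,\ldots,i_p\le n} b_{L(i_1,i_2)}(t) \, b_{L(i_2,i_3)}(t) \cdots b_{L(i_p,i_1)}(t),
\ee
where $L(i,j) = \frac{n}{2} - |\frac{n}{2} - |i-j||$ is the symmetric circulant link, I would write $\Cov(\eta_p(t_1),\eta_q(t_2))$ as a multi-indexed sum and invoke Wick's formula for centred Gaussians. This reduces the problem to a sum over pair partitions $\pi$ of $\{1,\ldots,p\}\cup\{1',\ldots,q'\}$ weighted by products of Brownian covariances $\E[b_k(t_a)\,b_k(t_b)]$ at the matched pairs. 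Only pair partitions with at least one cross pair between the two chains survive after centring, and only those whose induced link-equation system is consistent contribute at the leading order $n^{(p+q)/2+1}$. In particular, no pair partition of $p+q$ elements exists when $p+q$ is odd, which immediately yields the vanishing ``otherwise'' case.

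To separate the $t_1$ and $t_2-t_1$ dependence cleanly I would decompose
\be
b_k(t_2) \;=\; b_k(t_1) + \widetilde b_k, \qquad \widetilde b_k \sim N(0,\, t_2-t_1),
\ee
independent across $k$ and of $\{b_k(t_1)\}_k$. Substituting this into every factor of $\Tr(SC_n(t_2))^q$ and expanding yields, after grouping by the number $r$ of factors that retain $b(t_1)$, a weight $\binom{q}{r}$. Independence of the $\widetilde b$-family from the $b(t_1)$-family forces Wick pairings to keep the two species separate: the $\widetilde b$-pairings among the $q-r$ positions contribute $(t_2-t_1)^{(q-r)/2}$ (forcing $q-r$ even), while the remaining $p+r$ positions (the $p$ from the first trace plus the $r$ marked ones from the second) pair up and contribute $t_1^{(p+r)/2}$ (forcing $p+r$ even). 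This accounts for the outer $\binom{q}{r}\,t_1^{(p+r)/2}(t_2-t_1)^{(q-r)/2}$ skeleton of \eqref{eqn:cov_sp} together with the parity restrictions on $r$.

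What remains is a purely combinatorial count: the static symmetric-circulant covariance asymptotic $\lim \tfrac{1}{n}\Cov\bigl(\Tr(\widetilde{SC}_n)^p,\Tr(\widetilde{SC}_n)^r\bigr)$ for $\widetilde{SC}_n$ with i.i.d.\ standard Gaussian entries, together with the analogous internal pair-partition count for the $\widetilde b$-side. For each pair $\{a,b\}\in\pi$ the constraint $L(i_a,i_{a+1}) = L(i_b,i_{b+1})$ splits via the absolute value into a $\pm$-sign choice at that pair, and only the pair partitions whose signed linear system over $\Z_n$ attains the maximal free-parameter count contribute at leading order. Classifying these by the number $2m$ (respectively $2m+1$) of cross pairs between the two cycles and by the interleaving pattern $s$ of the cross endpoints around the $p$-cycle produces the factor $\binom{2m}{s}^2 s!(2m-s)!$; the constants $a_m, b_m, d_r$ enumerate the compatible sign choices inside each cycle, and $h_m(s)$ is the density of admissible sign assignments at the $2m$ cross vertices, expressible as an Eulerian-type sum.

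The hard part will be this last combinatorial step: isolating the pair partitions that contribute at leading order, converting the $\pm$-sign compatibility conditions on the link equations into the explicit numbers $a_m, b_m, h_m(s), d_r$, and running the case analysis on the parities of $p$ and $q$ so that the ``$m=1$'' (or ``$m=0$'') terms separate out as written. The Brownian time-dependence is handled cleanly by the decomposition above, so once the static symmetric-circulant pair-partition count is in hand, the three-case formula \eqref{eqn:cov_sp} will assemble by summing these contributions, weighted by $\binom{q}{r}\,t_1^{(p+r)/2}(t_2-t_1)^{(q-r)/2}$, over admissible $r$ and $m$.
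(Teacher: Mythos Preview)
Your overall architecture---decompose $b_k(t_2)=b_k(t_1)+\widetilde b_k$, expand the second trace, group by the number $r$ of surviving $b(t_1)$-factors, and reduce to the static symmetric--circulant covariance count---is exactly what the paper does (Lemma~\ref{lem:w_p_sp} and the splitting into $T_1,T_2$). The $\binom{q}{r}\,t_1^{(p+r)/2}(t_2-t_1)^{(q-r)/2}$ skeleton and the parity restrictions on $r$ fall out just as you describe.

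There is, however, a real gap in your plan for the odd--odd case. The extra term
\[
\frac{pq}{2^{(p+q)/2-1}}\sum_{r=0,2,\ldots}^{q-1}\binom{q-1}{r}\,t_1^{(p+1+r)/2}(t_2-t_1)^{(q-1-r)/2}\,d_r
\]
does \emph{not} arise from your ``classify by number $2m+1$ of cross pairs and interleaving pattern $s$'' mechanism. In the paper it is produced by a separate channel: the trace formula \eqref{trace,formula_sp} explicitly factors out powers of $b_0$ (the diagonal entry) before summing over $A_k$, and the $d_r$ contribution is precisely the case $k=p-1$, $\ell=q-1$ where exactly one $b_0(t_1)$ is pulled from each trace and these two are paired via $\E[u_0(u_0+v_0)]=t_1$, while the remaining $p-1$, $r$, $q-1-r$ factors each self--pair with \emph{no} further cross matches. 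That is why the binomial is $\binom{q-1}{r}$ rather than $\binom{q}{r}$, why the factor $pq$ appears, and why $d_r$ is a product of three independent opposite--sign pair--matching counts on blocks of sizes $p-1$, $r$, $q-1-r$. In your Wick--on--the--full--trace picture this shows up as the degenerate cross pair at link value $L=0$, which behaves differently from a generic $b_k$--$b_k$ cross pair (no $\pm$ sign freedom, the constraint $i_a=i_{a+1}$ instead of $i_{a+1}-i_a=\pm(i_{b+1}-i_b)$), so it cannot be absorbed into your $b_m,h_{2m+1}(s)$ bookkeeping without a separate case analysis. You will need either to isolate $b_0$ from the start, as the paper does, or to treat the $L=0$ pairings as a distinct combinatorial species. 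A second, smaller omission: the paper's trace formula differs for odd and even $n$ (the even case carries an additional $b_{n/2}$ term and the auxiliary sets $\tilde A_k$), and the proof verifies the two limits agree; your link--function formulation hides this but does not avoid it.
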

The following theorem describes joint convergence of $\{\eta_p(t);p\geq 2\}$ as $n \to \infty$.
\begin{theorem}\label{thm:symmulti_sp} 
 Suppose  $\{p_1, p_2, \ldots, p_r\}\subset \mathbb N$ with $ p_i\geq  2$ for $1\leq i\leq r$ and  $0<t_1<t_2 \cdots <t_r$. Then, as $n \tends \infty$,
	\begin{equation*}
	(\eta_{p_1}(t_1), \eta_{p_2}(t_2), \ldots , \eta_{p_r}(t_r)) \stackrel{\mathcal D}{\rightarrow} (N_{p_1}(t_1), N_{p_2}(t_2), \ldots , N_{p_r}(t_r)),
	\end{equation*}	
	where $\{N_p(t);t\geq 0, p\geq 2\}$ are Gaussian processes with mean zero and  covariance structure as described in the right side of \eqref{eqn:cov_sp}.
\end{theorem}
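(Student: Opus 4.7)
The plan is to apply the method of moments combined with the Cramér--Wold device. Since each $N_{p_i}(t_i)$ is a centred Gaussian and all pairwise covariances are already identified in Theorem \ref{thm:symcovar_sp}, it suffices to prove that every joint mixed moment
\[
\E\Big[\prod_{i=1}^r \eta_{p_i}(t_i)^{k_i}\Big]
\]
converges, as $n\to\infty$, to the value predicted by Wick's (Isserlis') formula: the sum over pair-partitions of the multiset of $K=\sum_i k_i$ factors, of the corresponding products of limiting pairwise covariances. Because a Gaussian law is determined by its moments, convergence of all such joint moments upgrades to convergence in distribution.

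The computation starts from the trace formula: expand $\Tr(SC_n(t))^p$ as $n^{-p/2}$ times a sum over circuits $\pi:\{0,1,\ldots,p\}\to\{1,\ldots,n\}$ with $\pi(0)=\pi(p)$, whose summands are products of Brownian entries indexed by the symmetric-circulant link values $\tfrac{n}{2}-\big|\tfrac{n}{2}-|\pi(\ell-1)-\pi(\ell)|\big|$, each evaluated at the corresponding time. Expanding the mixed moment produces a sum over $K$ circuits (one per factor $\eta_{p_i}(t_i)$), of expectations of products of $\sum_i k_i p_i$ Brownian entries. Using the independence across distinct link indices and $\E[b_a(t_i)b_a(t_j)]=\min(t_i,t_j)$, Isserlis' theorem replaces each such expectation by a sum over pair-matchings of the full list of links, weighted by products of $\min(t_i,t_j)$ factors.

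One then classifies contributions by the matching induced on the set of circuits themselves. With overall normalization $n^{-K/2-\frac{1}{2}\sum_i k_i p_i}$, the symmetric-circulant bookkeeping developed for the bivariate case (cf.\ the proof of Theorem \ref{thm:symcovar_sp} and \cite{maurya2019process}) shows that only pair-matchings which induce a pair-partition on the $K$ circuits contribute at leading order; any matching that links three or more circuits together leaves strictly fewer free vertex indices than this normalization tolerates, so its contribution is $o(1)$. Once the pair structure of circuits is fixed, the surviving sum factorizes across pairs, and inside each pair the analysis reduces exactly to the bivariate computation carried out for Theorem \ref{thm:symcovar_sp}: the factors $\min(t_i,t_j)=t_i$ (for $i<j$) split into $t_i^{\bullet}(t_j-t_i)^{\bullet}$ pieces depending on how many links of the shorter-time factor are paired with the longer-time one, which reproduces the inner sum over $r$ appearing in \eqref{eqn:cov_sp}.

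The main technical hurdle is precisely this \emph{no-triple} estimate: one must extend the symmetric-circulant modular link analysis from a single-time pair to an arbitrary time-tagged configuration of circuits, and check that every cross-edge of a matching that straddles three or more distinct circuits costs at least one independent vertex parameter. Combined with the usual circuit parity constraints, this forces pair-matched circuits to dominate. Once the estimate is in place, the Wick factorization in the limit matches the Isserlis expansion of the Gaussian moments, and the claimed finite-dimensional convergence follows.
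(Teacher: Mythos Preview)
Your overall strategy---moment method plus Cram\'er--Wold, reduction via Wick/Isserlis to pair-partitions of the $\eta$-factors, and a ``no-triple'' cluster estimate killing all contributions where three or more factors share indices---is exactly the paper's route. The cluster estimate you flag as the main hurdle is precisely Result~\ref{res:|B_{P_l}|_sp} (imported from \cite{SC_independent_2020}) together with Lemma~\ref{lem:cluster,decompose_A_sp}, and the pairwise factorization into the covariances of Theorem~\ref{thm:symcovar_sp} is carried out just as you describe.

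One point you gloss over that the paper treats explicitly: for the symmetric circulant the trace formula \eqref{trace,formula_sp} is different for odd and even $n$, and in each case it separates out the boundary entries $b_0(t)$ (and $b_{n/2}(t)$ when $n$ is even) from the bulk indices in $A_k$ and $\tilde A_k$. Your generic circuit expansion hides this, but the paper needs a short argument (Lemma~\ref{lem:cluster,decompose_A_sp} and Remark~\ref{rem:cluster,decompose_F_sp}) to show that only the top term $d_i=p_i$ with all bulk indices survives the normalization, and that the $\tilde A_k$ sums for even $n$ are lower order via \eqref{eqn:A,tildeA}. Once that reduction is made, the two arguments coincide. So your sketch is correct in spirit; just be aware that the symmetric-circulant bookkeeping forces an odd/even case split and a preliminary pruning of the $b_0$, $b_{n/2}$ contributions before the cluster machinery applies cleanly.
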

The following theorem describes the process convergence of $\{ \eta_p(t) ; t \geq 0\}$ for  $p \geq 2$.
\begin{theorem}\label{thm:symprocess}
	Suppose $ p\geq 2 $. Then as $n\to\infty$
	\begin{equation*}
	\{ \eta_p(t) ; t \geq 0\} \stackrel{\mathcal D}{\rightarrow} \{N_p(t) ; t \geq 0\}, 
	\end{equation*}
	where $\{N_p(t);  t \geq 0\}$  is defined as in Theorem \ref{thm:symmulti_sp}.
\end{theorem}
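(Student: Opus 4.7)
The plan is the standard two-step route to weak convergence on $(C_\infty, \mathcal{C}_\infty)$: establish convergence of finite-dimensional distributions, then prove tightness of the laws of $\{\eta_p(t);\, t \geq 0\}$ on $C([0,T])$ for every $T > 0$. The first step is already supplied by Theorem \ref{thm:symmulti_sp}, and since $b_j(0)=0$ gives $\eta_p(0)=0$ deterministically, tightness of the initial value is automatic. The real content of the proof is thus a Kolmogorov--Chentsov moment bound on increments.

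Specifically, I would aim to prove that there is a constant $C_{p,T}$ independent of $n$ with
\[
\E\bigl[(\eta_p(t) - \eta_p(s))^4\bigr] \leq C_{p,T}\,(t-s)^2, \qquad 0 \leq s \leq t \leq T.
\]
The natural device is to decompose $SC_n(t) = SC_n(s) + \Delta_n$, where $\Delta_n$ is the symmetric circulant built from the Brownian increments $\{b_j(t)-b_j(s)\}_{j \geq 0}$; this $\Delta_n$ is independent of $SC_n(s)$ and its entries are mean-zero Gaussians with variance $(t-s)/n$. Expanding $\Tr(SC_n(t))^p$ as a binomial-type sum over length-$p$ words in the two symmetric circulants $\{SC_n(s),\Delta_n\}$ and subtracting $\Tr(SC_n(s))^p$ eliminates the purely $SC_n(s)$ term, so every surviving word contains at least one $\Delta_n$-factor. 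Raising the difference to the fourth power and applying Wick's formula to the jointly Gaussian entries converts $\E[(\eta_p(t)-\eta_p(s))^4]$ into a sum over four closed circuits of length $p$ on the symmetric circulant link function $\tfrac{n}{2} - \bigl|\tfrac{n}{2} - |i-j|\bigr|$, with $2p$-fold Wick pairings constrained to carry a $\Delta_n$-edge inside every circuit.

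The main obstacle will be extracting the $(t-s)^2$ factor combinatorially. One must show that the pairings with non-negligible contribution necessarily carry at least two $\Delta_n$-variances -- each of size $(t-s)/n$ -- paired across the four circuits in a balanced way, so that the total increment power is exactly $(t-s)^2$ with a combinatorial coefficient bounded uniformly in $n$; pairings with too few $\Delta_n$-edges either vanish by centring or give $o((t-s)^2)$. This is the symmetric circulant analogue of the Toeplitz tightness arguments in \cite{maurya2019process} and \cite{m&s_toeplitz_2020}, and it rests on the same type of pair-partition estimates on the $SC_n$ link function that already underlie the proof of Theorem \ref{thm:symcovar_sp} -- only here applied to a matrix whose entries are differenced Brownian motions, which is what supplies the powers of $(t-s)$. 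Once the above estimate is in place, Kolmogorov--Chentsov yields tightness on $C([0,T])$ for every $T > 0$, and combining with the finite-dimensional convergence from Theorem \ref{thm:symmulti_sp} gives the stated process convergence on $(C_\infty,\mathcal{C}_\infty)$.
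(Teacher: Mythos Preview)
Your proposal is correct and follows essentially the same route as the paper's proof: finite-dimensional convergence via Theorem~\ref{thm:symmulti_sp}, then tightness through the Kolmogorov--Chentsov bound $\E[(\eta_p(t)-\eta_p(s))^4]\le M_T(t-s)^2$, obtained by writing $SC_n(t)=SC_n(s)+\Delta_n$ and expanding the difference of $p$-th powers so that every surviving word carries at least one increment factor. The only cosmetic difference is in how the factor $(t-s)^2$ is isolated: the paper rescales each increment entry as $b_j(t)-b_j(s)\stackrel{D}{=}\sqrt{t-s}\cdot N(0,1)$ and pulls a single $\sqrt{t-s}$ out of each of the four circuits up front, then bounds the remaining fourth moment via the cluster decomposition (Cases I--III, using $|B_{P_2}|=O(n^{p-1})$ and $|B_{P_4}|=o(n^{2p-2})$), whereas you propose to see the same $(t-s)^2$ emerge from Wick pairings of the increment factors; the two descriptions are equivalent for Gaussian entries.
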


In the above theorems for $SC_n(t)$, we have considered the fluctuation of $\eta_p(t)$ for  $p\geq2$. 
For $p=1$,
\begin{align*}
\eta_1(t) = \frac{1}{\sqrt{n}} \bigl\{ \Tr(SC_n(t)) - \E[\Tr(SC_n(t))]\bigr\} = \frac{1}{\sqrt{n}} \big[n \frac{b_0(t)}{\sqrt n} - \E(n\frac{b_0(t)}{\sqrt n})\big] = b_0(t),
\end{align*}
as $\E(b_0(t))=0$. So $\eta_1$ is distributed as $b_0(t)$ and its distribution does not depend on $n$. So we ignore the case for $p=1$.  

Here is a brief outline of the rest of the manuscript.  In Section \ref{sec:cov_rp} we prove Theorem \ref{thm:revcovar_rp}. 
In Section \ref{sec:joint convergence_rp} we prove Theorem \ref{thm:revmulti_rp} by using method of moments, Cramer-Wold device and Theorem \ref{thm:revcovar_rp} and finally, in  Section \ref{sec:process convergence_rp} we prove Theorem \ref{thm:revprocess} using Theorem \ref{thm:revmulti_rp}.
Similarly, in Section \ref{sec:cov_sp} we prove Theorem \ref{thm:symcovar_sp}, 
in Section \ref{sec:joint convergence_sp} we prove Theorem \ref{thm:symmulti_sp} and in Section \ref{sec:process convergence_sp}, we complete the proof of Theorem \ref{thm:symprocess}.

 
\section{Proof of Theorem \ref{thm:revcovar_rp}}\label{sec:cov_rp}
We first recall the trace formula of reverse circulant matrices with entries $\{x_i;i\geq 1\}$ from \cite{adhikari_saha2017}.
Let $e_1,\ldots,e_n$ be the standard unit vectors in $\mathbb R^n$, i.e., $e_i=(0,\ldots,1,\ldots, 0)^t$ ($1$ in $i$-th place).  Then we have 
	\begin{align*}
	(RC_n)e_i=\mbox{$i$-th column}=\sum_{i_1=1}^nx_{i_1}e_{i_1-i+1 \mbox{ mod $n$}},
	\end{align*}
	for $i=1,\ldots, n$ (we write $e_0=e_n$). Repeating the procedure we get 
	\begin{align*}
	(RC_n)^{2p}e_i&=\sum_{i_1,\ldots,i_{2p}=1}^nx_{i_1}\ldots x_{i_{2p}}e_{i_{2p}-i_{2p-1}\cdots -i_1+i \mbox{ mod $n$}},
	\\(RC_n)^{2p+1}e_i&=\sum_{i_1,\ldots,i_{2p+1}=1}^nx_{i_1}\ldots x_{i_{2p+1}}e_{i_{2p+1}-i_{2p}\cdots +i_1-i+1 \mbox{ mod $n$}},
	\end{align*}
	for $i=1,\ldots, n$.  Therefore the trace of $(RC_n)^{2p}$ can be written as 
	\begin{align}\label{trace formula RC_n}
	\Tr[(RC_n)^{2p}]=\sum_{i=1}^ne_i^t(RC_n)^{2p}e_i=n\sum_{A_{2p}}x_{i_1}\ldots x_{i_{2p}},
	\end{align}
where 
\begin{equation}\label{def:A_2p}
A_{2p}=\big\{(i_1,\ldots,i_{2p})\in \mathbb N^{2p}\suchthat \sum_{k=1}^{2p}(-1)^ki_k=0 \mbox{ (mod $n$)}, 1\le i_1,\ldots,i_{2p}\le n\big\}.
\end{equation}

The following lemma is an easy consequence of the trace formula.
\begin{lemma}\label{lem:w_p_rp}
Suppose $0<t_1 \leq t_2$ and $p, q\geq 1$.
Then
\begin{align}\label{eqn:covariance_rp}
\lim_{n\to \infty} \Cov\big(w_{p}(t_1),w_{q}(t_2)\big)&=\lim_{n\to \infty} \frac{1}{n^{p+q-1}}  \sum_ {r=0}^{2q} \binom{2q}{r} \sum_{A_{2p}, A_{2q}} \Big\{\E[U_{I_{2p}}U_{J_r}] \E[V_{J_{r+1,2q}} ]  \\
&  \qquad - \E[ U_{I_{2p}}] \E[U_{J_r}]\E[ V_{J_{r+1,2q}}  ] \Big\}, \nonumber
\end{align}
where $U_{I_{2p}} = u_{i_1}\cdots u_{i_{2p}}$, $U_{J_r} = u_{j_1}\cdots u_{j_r}$ and $V_{J_{r+1,2q}} = v_{j_{r+1}}\cdots v_{j_{2q}}$ with
$u_i=b_i(t_1)$ and $ v_i=b_i(t_2)- b_i(t_1)$.
\end{lemma}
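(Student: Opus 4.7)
The plan is to derive \eqref{eqn:covariance_rp} by combining the trace identity \eqref{trace formula RC_n} with the independent-increment property of the driving Brownian motions; each step is essentially bookkeeping once the right decomposition is in place.

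First, I would substitute $x_i = n^{-1/2} b_i(t)$ into \eqref{trace formula RC_n} to get $\Tr[(RC_n(t))^{2p}] = n^{1-p}\sum_{A_{2p}} b_{i_1}(t)\cdots b_{i_{2p}}(t)$, and plug this into \eqref{eqn:w_p(t)_rp} to obtain
\[
 w_p(t) = \frac{1}{n^{p-1/2}} \sum_{(i_1,\ldots,i_{2p}) \in A_{2p}} \Bigl\{\prod_{k=1}^{2p} b_{i_k}(t) - \E\Bigl[\prod_{k=1}^{2p} b_{i_k}(t)\Bigr]\Bigr\},
\]
with an analogous expression for $w_q(t)$. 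Bilinearity of covariance and interchanging finite sums with expectations then give the exact finite-$n$ identity
\[
 \Cov\bigl(w_p(t_1), w_q(t_2)\bigr) = \frac{1}{n^{p+q-1}} \sum_{A_{2p},A_{2q}} \Bigl\{ \E[U_{I_{2p}} B_{J_{2q}}] - \E[U_{I_{2p}}]\,\E[B_{J_{2q}}] \Bigr\},
\]
where $B_{J_{2q}} := \prod_{l=1}^{2q} b_{j_l}(t_2)$ and $U_{I_{2p}} = u_{i_1}\cdots u_{i_{2p}}$ as in the statement.

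Next, I would invoke the independent-increment decomposition $b_j(t_2) = u_j + v_j$, with $u_j = b_j(t_1)$ and $v_j = b_j(t_2) - b_j(t_1)$; the family $\{v_j\}$ consists of i.i.d.\ $N(0, t_2 - t_1)$ variables and is jointly independent of $\{u_j\}$. Expanding
\[
 B_{J_{2q}} = \prod_{l=1}^{2q}(u_{j_l} + v_{j_l}) = \sum_{S \subseteq [2q]} \Bigl(\prod_{l \in S} u_{j_l}\Bigr)\Bigl(\prod_{l \notin S} v_{j_l}\Bigr),
\]
and using the independence of the $u$- and $v$-families, every expectation involving $B_{J_{2q}}$ factors as a product of an expectation in the $u$'s and an expectation in the $v$'s.

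Finally, I would group the resulting sum over $S$ by its cardinality $|S| = r \in \{0, 1, \ldots, 2q\}$. There are $\binom{2q}{r}$ such subsets, and a symmetry argument on the constraint set $A_{2q}$ shows that, in the limit $n \to \infty$, the inner sum over $A_{2p} \times A_{2q}$ attached to each $S$ with $|S| = r$ is the same; hence one can replace that common value by the canonical choice $S = \{1, \ldots, r\}$, which produces $\E[U_{I_{2p}} U_{J_r}]\E[V_{J_{r+1,2q}}] - \E[U_{I_{2p}}]\E[U_{J_r}]\E[V_{J_{r+1,2q}}]$, and multiply by $\binom{2q}{r}$. Summing over $r$ yields the right-hand side of \eqref{eqn:covariance_rp}. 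The principal technical step is this last symmetry argument: because the alternating-sign constraint $\sum_l (-1)^l j_l \equiv 0 \pmod n$ in $A_{2q}$ is not setwise invariant under arbitrary coordinate permutations, the equality of the $A_{2q}$-sums attached to different subsets of the same size $r$ is not literal at finite $n$ but must be justified asymptotically, by identifying the dominant-order index-coincidence patterns in $A_{2q}$ and checking that the resulting leading-order count depends only on $r$.
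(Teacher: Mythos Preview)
Your overall approach---expand the trace via \eqref{trace formula RC_n}, decompose $b_j(t_2)=u_j+v_j$, and group the resulting sum over subsets $S\subseteq\{1,\ldots,2q\}$ by $|S|=r$---is exactly the route the paper takes. You are right to single out the final step, in which the sum over all $\binom{2q}{r}$ subsets of size $r$ is replaced by $\binom{2q}{r}$ times the contribution of the canonical choice $S=\{1,\ldots,r\}$; the paper in fact asserts this step without comment, so your write-up is, if anything, more careful in flagging it as the one place where something nontrivial has to be checked.

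The difficulty is that the asymptotic-symmetry check you propose would \emph{fail}. Because the constraint defining $A_{2q}$ alternates in sign, the leading-order contribution attached to a subset $S$ depends not merely on $|S|$ but on how many odd and how many even positions $S$ contains. Take $p=1$, $q=2$, $r=2$: the four ``balanced'' subsets $\{1,2\},\{1,4\},\{2,3\},\{3,4\}$ (one odd, one even position each) each give a count of exact order $n^2$, since the constraint $-j_1+j_2-j_3+j_4\equiv 0$ is automatically satisfied once the paired indices coincide; the two ``unbalanced'' subsets $\{1,3\},\{2,4\}$ give only $O(n)$. Hence the true $r=2$ contribution is $4$ times the canonical one, not $\binom{4}{2}=6$ times. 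Carrying the computation through, one finds $\lim_{n\to\infty}\Cov\bigl(w_1(t_1),w_2(t_2)\bigr)=8t_1^2t_2$, whereas the right-hand side of \eqref{eqn:covariance_rp} evaluates to $12t_1^2(t_2-t_1)+8t_1^3=12t_1^2t_2-4t_1^3$; these disagree whenever $t_1\neq t_2$. So the step is not merely unjustified but actually false at leading order, and the identity as written cannot be established by the dominant-pattern argument you outline.
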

\begin{proof}
Define
\begin{align*}
 U:=RC_n(t_1)=  \frac{1}{\sqrt{n}} \Big(u_{j-i \;{(\mbox{mod $n$})}}\Big)_{i,j=1} ^ {n} \mbox{ and }
 V:=RC_n(t_2)-RC_n(t_1) =  \frac{1}{\sqrt{n}} \Big(v_{j-i \;{(\mbox{mod $n$})}}\Big)_{i,j=1} ^ {n}.
 \end{align*}
 Since $\E(w_{p}(t_1))=\E(w_{q}(t_2))=0$, we have 
\begin{align*}
 \Cov\big(w_{p}(t_1),w_{q}(t_2)\big)
 &= \frac{1}{n} \Big\{ \E[\Tr(RC_n(t_1))^{2p} \Tr(RC_n(t_2))^{2q} ]- \E[\Tr(RC_n(t_1))^{2p}]\E[\Tr(RC_n(t_2))^{2q}]   \Big\} \\
 &= \frac{1}{n} \Big\{ \E[\Tr(U)^{2p}\Tr(U+V)^{2q} ]- \E[\Tr(U)^{2p}]\E[\Tr(U+V)^{2q}]   \Big\}. 
 \end{align*}
Now on using the trace formula (\ref{trace formula RC_n}), we get
\begin{align*}
\lim_{n\to \infty} \Cov\big(w_{p}(t_1),w_{q}(t_2)\big)&=  \lim_{n\to \infty} \frac{1}{ n^ {p+q-1}} \Bigg[ \E\Big\{ \Big( \sum_{A_{2p}} u_{i_1} \cdots u_{i_{2p}} \Big) \Big(   \sum_{A_{2q}} (u_{j_1} + v_{j_1}) \cdots  (u_{j_{2q}} + v_{j_{2q}})   \Big)   \Big\} \\
 &\qquad  - \E \Big( \sum_{A_{2p}} u_{i_1} \cdots u_{i_{2p}} \Big) \Big(\E\sum_{A_{2q}} (u_{j_1} + v_{j_1}) \cdots  (u_{j_{2q}} + v_{j_{2q}}) \Big)     \Bigg] \\
%
  &=\lim_{n\to \infty} \frac{1}{n^{p+q-1}}  \sum_ {r=0}^{2q} \binom{2q}{r} \sum_{A_{2p}, A_{2q}} \Big\{\E[u_{i_1} \cdots u_{i_{2p}} u_{j_1} \cdots u_{j_r}v_{j_{r+1}} \cdots v_{j_{2q}} ] \\
  &\qquad -\E[ u_{i_1} \cdots u_{i_{2p}} ] \E[u_{j_1}\cdots u_{j_r} v_{j_{r+1}} \cdots v_{j_{2q}}  ]        \Big\} \nonumber \\
 &= \lim_{n\to \infty} \frac{1}{n^{p+q-1}}  \sum_ {r=0}^{2q} \binom{2q}{r} \sum_{A_{2p}, A_{2q}} \Big\{\E[U_{I_{2p}}U_{J_r}] \E[V_{J_{r+1,2q}} ]  \\
&  \qquad - \E[ U_{I_{2p}}] \E[U_{J_r}]\E[ V_{J_{r+1,2q}}  ] \Big\},
\end{align*}
 where for $p,q\geq 1$ and $0 \leq r \leq q$,
 \begin{align*}
 I_{2p}&= (i_1, i_2, \ldots, i_{2p}),\quad J_{2q}=(j_1,j_2, \ldots, j_{2q}),	\\
 J_r&= (j_1, j_2, \ldots, j_r),\quad J_{r+1,2q}= (j_{r+1}, j_{r+2}, \ldots, j_{2q}),\\
U_{I_{2p}} &= u_{i_1}\cdots u_{i_{2p}},\quad  U_{J_r} = u_{j_1}\cdots u_{j_r} \ \mbox{ and } \ V_{J_{r+1,2q}} = v_{j_{r+1}}\cdots v_{j_{2q}}.
\end{align*} 	
Also note that, $U_{J_0}=1$ and $V_{J_{2q+1,2q}}=1$.
This completes the proof of the lemma.
\end{proof}

Now we state a result which will be used in the proof of  Theorem \ref{thm:revcovar_rp}. 
We refer to \cite[Lemma 14]{adhikari_saha2017} for its proof.

\begin{result} \label{result:cardinalityA}
	Suppose 
	\begin{equation*}
	A_{2p,s}=\big\{(i_1,\ldots,i_{2p})\in \mathbb N^{2p}\suchthat \sum_{k=1}^{2p}(-1)^ki_k=sn,1\le i_1,\ldots,i_{2p}\le n\big\}.
	\end{equation*}
	Then the cardinality of $A_{2p,s}$ is  
	$$
	|A_{2p,s}|=\sum_{k=0}^{p+s-1}(-1)^k\binom{2p}{k}\binom{(p+s-k)n+p-1}{2p-1}, \;\;\mbox{ for $s=-(p-1),\ldots, 0,\ldots, p-1$.}
	$$ 	
\end{result}
Now for a given vector $(i_1, i_2, \ldots, i_p)\in\mathbb N^p$, we define a term called {\it odd-even pair matched} elements of the vector. 
\begin{definition}\label{def:odd-even_for_reverse}
	Suppose $v=(i_1, i_2, \ldots, i_p)$ is a vector in $\mathbb N^p$. Two elements $i_k, i_\ell$ of $v$ are said to be {\it odd-even pair matched} if $i_k=i_\ell$, $i_k$ appears exactly twice in $v$, once each at an odd and an even position. For example, in $(1,1,3,4)$, $1$ is {\it odd-even pair matched} whereas it is not so in $(1,2,1,3)$. 
	
	We shall call $v$ {\it odd-even pair matched} if all its entries are {\it odd-even pair matched}. 
	In that case, $p$ is necessarily even.
\end{definition}
Observe that, if $(i_1,i_2,\ldots,i_{2p})\in A_{2p}$  and each entry of $\{i_1,i_2,\ldots,i_{2p} \}$ has multiplicity greater than or equal to two, then the maximum number of free variables in $(i_1,i_2,\ldots,i_{2p})$ will be $p$, only when $(i_1,i_2,\ldots,i_{2p})$ is {\it odd-even pair matched}. By free variables we mean that these variables can be chosen freely from their range $\{1,2,\ldots,n\}$. We shall use this observation in the proof of Theorem \ref{thm:revcovar_rp}. 

\begin{proof}[Proof of Theorem \ref{thm:revcovar_rp}]
 First note the following observations from (\ref{eqn:covariance_rp}) and Definition \ref{def:odd-even_for_reverse}, 
 \begin{enumerate}
 \item[(i)] If $r$ is odd then $\E[V_{J_{r+1,2q}} ]=0$, as for odd value of $r$, there exist at least one random variable with odd power in $V_{J_{r+1,2q}}$. Moreover $\E[U_{I_{2p}} U_{J_r}]\E[ V_{J_{r+1,2q}} ]    -\E[ U_{I_{2p}}] \E[U_{J_r}]\E[ V_{J_{r+1,2q}}  ]=0$.
 \item[(ii)] If $r=0$ or $\{i_1,i_2,\ldots,i_{2p}\}\cap \{j_1,j_2,\ldots,j_{r}\}=\emptyset$ then from the independence of $u_i$ and $v_i$,    $\E[U_{I_{2p}} U_{J_r}]\E[ V_{J_{r+1,2q}} ]    -\E[ U_{I_{2p}}] \E[U_{J_r}]\E[ V_{J_{r+1,2q}}  ]=0$.
  \item[(iii)] Since $u_i$ and $v_i$ are independent for each $i$, cross-matches among  $\{i_1, i_2, \ldots, i_{2p}\}$ and $\{j_{r+1}, j_{r+2}, \ldots, j_{2q} \}$ reduces the number of free variables in total contribution. For maximum contribution in (\ref{eqn:covariance_rp}), we discuss only  $\{i_1, i_2, \ldots, i_{2p}\} \cap \{j_{r+1}, j_{r+2}, \ldots, j_{2q} \} = \emptyset$ case. In fact we can show that, $\{i_1, i_2, \ldots, i_{2p}\} \cap \{j_{r+1}, j_{r+2}, \ldots, j_{2q} \} \neq \emptyset$  has a zero contribution in (\ref{eqn:covariance_rp}).
\end{enumerate}  
%

	Now, from the above observations,  we conclude that, non-zero contribution in (\ref{eqn:covariance_rp}) will come only when $r$ is even and there is at least one cross-match among $\{i_1,\ldots,i_{2p}\}$ and $\{j_1,\ldots,j_{r}\}$ with
	$\{i_1, i_2,\ldots, i_{2p}\}\cap \{j_{r+1}, j_{r+2},\ldots, j_{2q}\} = \emptyset$.
	Similar to the proof of Theorem 1 (Case I) of \cite{RC_independent_2020}, we can show that (\ref{eqn:covariance_rp}) has zero contribution when there is an odd number of cross-matches among $\{i_1,\ldots,i_{2p}\}$ and $\{j_1,\ldots, j_{r}\}$. 
	So for non-zero contribution, we discuss only the case when $\{i_1,\ldots,i_{2p}\}\cap \{j_1,\ldots,j_{r}\}|=2k$ for $k=1,2,\ldots, \min\{p,r'\}$, where $|\{\cdot\}|$ denotes cardinality of the set  $\{\cdot\}$ and $r'=\frac{r}{2}$. Under the above observations, (\ref{eqn:covariance_rp}) will be
	\begin{align} \label{eqn:T_1+T_2_rp}
	\lim_{n\to \infty} \Cov\big(w_{p}(t_1),w_{q}(t_2)\big) & = \lim_{n\to \infty}  \frac{1}{n^{p+q-1}}  \sum_ {r'=1}^{q} \binom{2q}{2r'} \sum_{k=1}^{\min\{p,r'\}}\sum_{ \tilde{A}_{2k}} \Big\{\E[U_{I_{2p}}U_{J_{2r'}}] \E[V_{J_{2r'+1,2q}} ] \nonumber \\
	&   \ \ \  -\E[ U_{I_{2p}}] \E[U_{J_{2r'}}]\E[ V_{J_{2r'+1,2q}}  ]        \Big\} \nonumber \\
	&= \sum_ {r'=1}^{q} \binom{2q}{2r'} \sum_{k=1}^{\min\{p,r'\}} T_1 - T_2, \mbox{ say}, 
	\end{align}	
	where for each $k=1,2, \ldots, \min\{p,r'\}$, $\tilde{A}_{2k}$ is defined as
	\begin{equation}\label{def:I_k_rp}
	\tilde{A}_{2k}:=\{((i_1,\ldots,i_{2p}),(j_1,\ldots, j_{2r'}, j_{2r'+1}, \ldots, j_{2q}))\in A_{2p}\times A_{2q}\suchthat |\{i_1,\ldots,i_{2p}\}\cap \{j_1,\ldots,j_{2r'}\}|=2k\}.
	\end{equation}
Note that a typical element of $\tilde{A}_{2k}$ can be written as $$(( \ell_1, \ell_2 , \ldots, \ell_{2k}, i_{2k+1}, \ldots,i_{2p} ) ( \ell_1, \ell_2 , \ldots, \ell_{2k}, j_{2k+1}, \ldots,j_{2r'} j_{2r'+1}, \ldots, j_{2q})).$$
Now we first calculate the term $T_1$ of (\ref{eqn:T_1+T_2_rp}) for some fixed $r'$. Recall that, for $2k$ many cross-matches among $I_{2p}$ and $J_{2r'}$, $T_1$ looks like
$$T_1= \lim_{n\to \infty} \frac{1}{n^{p+q-1}} \sum_{\tilde{A}_{2k}}\E[ u_{\ell_1}u_{\ell_2} \cdots u_{\ell_{2k}}  u_{i_{2k+1}}  \cdots u_{i_{2p}} u_{\ell_1}u_{\ell_2} \cdots u_{\ell_{2k}} u_{j_{2k+1}}\cdots u_{j_{2r'}} v_{j_{2r'+1}} \cdots v_{j_{2q}}  ].$$
Note that, non-zero contribution will occur in $T_1$ when the following conditions hold:
\begin{enumerate}
	\item[(i)]  $(i_{2k+1}, i_{2k+2}, \ldots,i_{2p}), (j_{2k+1}, j_{2k+2}, \ldots,j_{2r'})$ and $(j_{2r'+1}, j_{2r'+2}, \ldots,j_{2q})$ are {\it odd-even pair matched},
	\item[(ii)] $\{\ell_1,\ell_2, \ldots, \ell_{2k}\} \cap \{i_{2k+1}, i_{2k+2}, \ldots,i_{2p}\} \cap \{ j_{2k+1}, j_{2k+2}, \ldots, j_{2r'}\} \cap \{j_{2r'+1}, j_{2r'+2}, \ldots,j_{2q} \} = \emptyset $,
	\item[(iii)] for $k=1$, $\{\ell_1,\ell_2\}$ is {\it odd-even pair matched}, 
	\item[(iv)] for $k\geq 2$, each entries of the set  $\{\ell_1,\ell_2, \ldots, \ell_{2k}\}$ are distinct.
	\end{enumerate}  
For $k\geq 2$, the contribution will be of the order $O(n^{2k-1+p-k+ r'-k + q-r'})=O(n^{p+q-1})$, where $(-1)$ arises due to the effective constraint, $\sum_{d=1}^{2k}(-1)^d \ell_d=0 \mbox{ (mod $n$)}$. For $k=1$, the contribution will be of the order $O(n^{p+ r'-1 + q-r'})=O(n^{p+q-1}) $, where $(-1)$ arises because once we have counted the entries of $I_{2p}$, the entries $\ell_1$ and $\ell_2$ of $J_{2r'}$ will be fixed. In other situations we can show that the contribution will be of the order $o(n^{p+q-1})$. Therefore for each $k$, total contribution in $T_1$ is $O(n^{p+q-1})$. Hence for each fixed $r'$ and $k$, 
\begin{align} \label{T_1_rp}
T_1 &= \lim_{n\to \infty} \frac{1}{n^{p+q-1}} \sum_{\tilde{A}_{2k}}\E[ u_{i_1} \cdots u_{i_{2p}} u_{j_1} \cdots u_{j_{2r'}} v_{j_{2r'+1}} \cdots v_{j_{2q}}   ] \nonumber \\
&= \lim_{n\to \infty} \frac{1}{n^{p+q-1}} \sum_{A'_{2k}} \sum_{A'_{2k}} c_k n^{(p-k + r'-k+ q-r')} (t_1)^{p-k+r'-k} (t_2-t_1)^{q-r'} \E[u_{i_1} \cdots u_{i_{2k}} u_{j_1} \cdots u_{j_{2k}}  ]\nonumber \\
&=  {t_1}^{p+r'-2k} (t_2-t_1)^{q-r'} c_k \lim_{n\to \infty} \frac{1}{n^{2k-1}} \sum_{A'_{2k}} \sum_{A'_{2k}} \E[u_{i_1} \cdots u_{i_{2k}} u_{j_1} \cdots u_{j_{2k}}  ] \nonumber \\
&=  {t_1}^{p+r'-2k} (t_2-t_1)^{q-r'} t_1^{2k} c_k \sum_{s=-(k-1)}^{k-1}\lim_{n\to \infty}\frac{|A_{2k,s}'| (2-{\bf 1}_{\{s=0\}})k!k!}{n^{2k-1}} \nonumber \\
&=  {t_1}^{p+r'} (t_2-t_1)^{q-r'} c_k\sum_{s=-(k-1)}^{k-1} \sum_{j=0}^{k+s-1}(-1)^j\binom{2k}{j}(k+s-j)^{2k-1}(2-{\bf 1}_{\{s=0\}})k!k! \nonumber \\
 &= {t_1}^{p+r'} (t_2-t_1)^{q-r'}  c_k g(k), 
\end{align}
 where $ A_{2k}' =\big\{(i_1,\ldots,i_{2k})\in \mathbb N^{2k}\suchthat \sum_{d=1}^{2k}(-1)^di_d=0 \mbox{ (mod $n$)}, 1\le i_1\neq i_2\neq \cdots\neq i_{2k}\le n\big\}$
 and 
 \begin{align*}
c_k &=\l(\binom{p}{p-k}^2(p-k)! \binom{r'}{r'-k}^2(r'-k)!\binom{q}{q-r'}^2(q-r')!\r),\\
 g(k) & =\frac{1}{(2k-1)!}\sum_{s=-(k-1)}^{k-1}\sum_{j=0}^{k+s-1}(-1)^j\binom{2k}{j}(k+s-j)^{2k-1}(2-{\bf 1}_{\{s=0\}})k!k!.
 \end{align*}
For details about $c_k, g(k)$ and last three equalities above, we refer the reader to the proof of Theorem 1 of \cite{RC_independent_2020}.

 Now we calculate the second term $T_2$ of  \eqref{eqn:T_1+T_2_rp} for some fixed $r'$. First let $k=1$. Then 
 $$T_2= \lim_{n\to \infty} \frac{1}{n^{p+q-1}} \sum_{\tilde{A}_{2}}\E[ u_{\ell_1}u_{\ell_2}  u_{i_3}  \cdots u_{i_{2p}} ] \E[u_{\ell_1}u_{\ell_2} u_{j_3}\cdots u_{j_{2r'}}]\E[ v_{j_{2r'+1}} \cdots v_{j_{2q}}  ].$$
 Clearly non-zero contribution will occur when following conditions hold: 
\begin{enumerate}
	\item[(i)] $\{\ell_1, \ell_2\}$, $(i_3, i_4, \ldots,i_{2p}), (j_3, j_4, \ldots,j_{2r'})$ and $(j_{2r'+1}, j_{2r'+2}, \ldots,j_{2q})$ are {\it odd-even pair matched},
	\item[(ii)] $\{\ell_1,\ell_2\} \cap \{i_3, i_4, \ldots,i_{2p}\} \cap \{ j_3, j_4, \ldots,j_{2r'}\} \cap \{j_{2r'+1}, j_{2r'+2}, \ldots,j_{2q} \} = \emptyset $,
	\end{enumerate} 
and the contribution will be $O(n^{p+ r'-1 + q-r'})=O(n^{p+q-1})$. 
In other situations, we can show that the contribution will be $o(n^{p+q-1})$. Therefore
 \begin{align}\label{T_2_k=1_rp}
 T_2\mathbb I_{\{k=1\}} & = \lim_{n\to \infty} \frac{1}{n^{p+q-1}} \sum_{\tilde{A}_{2}}\E[ u_{\ell_1}u_{\ell_2}  u_{i_3}  \cdots u_{i_{2p}} ] \E[u_{\ell_1}u_{\ell_2} u_{j_3}\cdots u_{j_{2r'}}]\E[ v_{j_{2r'+1}} \cdots v_{j_{2q}}  ]\nonumber \\
 &= c_1  t_1^{p+r'} (t_2-t_1)^{q-r'},
 \end{align}
where $c_1 = \l(\binom{p}{p-1}^2(p-1)! \binom{r'}{r'-1}^2(r'-1)!\binom{q}{q-r'}^2(q-r')!\r).$ 

 Now we deal with $k\geq 2$.  Here a typical term of $T_2$ looks like 
 $$\E[ u_{\ell_1}u_{\ell_2} \cdots u_{\ell_{2k}}  u_{i_{2k+1}}  \cdots u_{i_{2p}} ] \E[u_{\ell_1}u_{\ell_2} \cdots u_{\ell_{2k}} u_{j_{2k+1}}\cdots u_{j_{2r'}}]\E[ v_{j_{2r'+1}} \cdots v_{j_{2q}}  ].$$
 Similar to $T_1$, non-zero contribution in $T_2$ is possible when the following conditions hold: 
\begin{enumerate}
	\item[(i)] $(i_{2k+1}, i_{2k+2}, \ldots,i_{2p}), (j_{2k+1}, j_{2k+2}, \ldots,j_{2r'})$ and $(j_{2r'+1}, j_{2r'+2}, \ldots,j_{2q})$ are {\it odd-even pair matched},
	\item[(ii)] $\{\ell_1,\ell_2, \ldots, \ell_{2k}\} \cap \{i_{2k+1}, i_{2k+2}, \ldots,i_{2p}\} \cap \{ j_{2k+1}, j_{2k+2}, \ldots, j_{2r'}\} \cap \{j_{2r'+1}, j_{2r'+2}, \ldots,j_{2q} \} = \emptyset $,
	\item[(iii)] each entries of the set  $\{\ell_1,\ell_2, \ldots, \ell_{2k}\}$ are distinct,
	\end{enumerate}  
and the contribution will be $O(n^{p+ r'-k + q-r'})=O(n^{p+q-k}) $. Therefore for each $k\geq 2$ and $r' \in \{1, 2, \ldots, q\}$ 
 \begin{equation} \label{T_2_k>1_rp}
 T_2 \mathbb I_{\{k\geq 2\}}=0.
 \end{equation}
Now using (\ref{T_1_rp}) (\ref{T_2_k=1_rp}) and (\ref{T_2_k>1_rp}) in  (\ref{eqn:T_1+T_2_rp}), we get
\begin{align*}
\lim_{n\to \infty} \Cov\big(w_{p}(t_1),w_{q}(t_2)\big) & = \sum_ {r'=1}^{q} \binom{2q}{2r'} \Big\{\sum_{k=1}^{\min\{p,r'\}} {t_1}^{p+r'} (t_2-t_1)^{q-r'}  c_k g(k) -  {t_1}^{p+r'} (t_2-t_1)^{q-r'} c_1 \Big\} \nonumber \\
& = \sum_ {r'=1}^{q} \binom{2q}{2r'} {t_1}^{p+r'} (t_2-t_1)^{q-r'} \Big\{\sum_{k=1}^{\min\{p,r'\}}  c_k g(k) - c_1\Big\} .
\end{align*}

This completes the proof of Theorem \ref{thm:revcovar_rp}.
\end{proof}
The following result is a consequence of  Theorem \ref{thm:revcovar_rp}. It will be used in the proof of Theorem \ref{thm:revmulti_rp}.
\begin{corollary}\label{res:existence of Gaussian process_rp} 
For $p\geq 1$, there exists a centred Gaussian process $\{N_{p}(t);t\geq 0\}$  such that
$$\Cov(N_{p}(t_1),N_{p}(t_2))= \sum_ {r'=1}^{p} \binom{2p}{2r'} {t_1}^{p+r'} (t_2-t_1)^{p-r'} \big[ \sum_{k=1}^{r'}  \tilde{c}_k g(k) - \tilde{c}_1 \big],$$
where $g(k)$ as in \eqref{eqn:c_k,g(k)_rp} and for $k=1,2, \ldots, p$
 \begin{align*}
\tilde{c}_k &=\l(\binom{p}{p-k}^2(p-k)! \binom{r'}{r'-k}^2(r'-k)!\binom{p}{p-r'}^2(p-r')!\r).
 \end{align*} 
\end{corollary}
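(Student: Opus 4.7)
The plan is to derive the corollary as a direct specialization of Theorem \ref{thm:revcovar_rp} to the diagonal case $p=q$, and then invoke the standard existence principle for Gaussian processes with a prescribed covariance kernel.

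First, I would set $q = p$ in the formula \eqref{eqn:cov_rp}. Reading off the constants, the expression $c_k$ of \eqref{eqn:c_k,g(k)_rp} becomes
\[
\tilde{c}_k = \binom{p}{p-k}^2(p-k)!\,\binom{r'}{r'-k}^2(r'-k)!\,\binom{p}{p-r'}^2(p-r')!,
\]
with $\min\{p,r'\} = r'$ since $r'$ ranges over $\{1,\dots,p\}$. The factor $\binom{2q}{2r'}$ becomes $\binom{2p}{2r'}$, and the time factors $t_1^{p+r'}(t_2-t_1)^{p-r'}$ fall into place. Thus
\[
K(t_1,t_2) := \lim_{n\to\infty}\Cov\bigl(w_p(t_1),w_p(t_2)\bigr) = \sum_{r'=1}^{p}\binom{2p}{2r'}\,t_1^{p+r'}(t_2-t_1)^{p-r'}\Bigl[\sum_{k=1}^{r'}\tilde c_k g(k) - \tilde c_1\Bigr]
\]
for $0 < t_1 \le t_2$, and we extend symmetrically by $K(t_1,t_2) = K(t_2,t_1)$ and set $K(0,t) = K(t,0) = 0$.

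The existence of a centred Gaussian process with covariance $K$ is guaranteed by the Kolmogorov consistency theorem provided the kernel $K$ is symmetric and non-negative definite. Symmetry is built into the construction. For non-negative definiteness, the key observation is that each $K_n(t_1,t_2) := \Cov(w_p(t_1), w_p(t_2))$ is, for every fixed $n$, the covariance function of a genuine real-valued stochastic process (namely $\{w_p(t); t \ge 0\}$), and hence non-negative definite. Since pointwise limits of non-negative definite kernels remain non-negative definite, $K = \lim_{n\to\infty} K_n$ inherits this property on $[0,\infty) \times [0,\infty)$.

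The main (essentially only) step requiring any attention is the verification that the pointwise limit of $K_n$ exists for all pairs $0 \le t_1 \le t_2$, not merely for $0 < t_1 \le t_2$, and that the extension to the boundary $t_1 = 0$ is consistent. This is immediate from the explicit formula, since every summand in the expression for $K$ carries a factor $t_1^{p+r'}$ with $r' \ge 1$, so $K(0,t_2) = 0$, which is compatible with $w_p(0) \equiv 0$. With symmetry, non-negative definiteness, and this boundary consistency in hand, Kolmogorov's extension theorem produces the required centred Gaussian process $\{N_p(t); t \ge 0\}$ and completes the proof.
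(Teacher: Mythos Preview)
Your proposal is correct and follows the same approach as the paper, which simply invokes the Kolmogorov consistency theorem after specializing Theorem \ref{thm:revcovar_rp} to $q=p$. Your version is in fact more detailed, explicitly justifying non-negative definiteness of the limiting kernel as a pointwise limit of genuine covariance functions and checking the boundary behaviour at $t_1=0$, whereas the paper records only the one-line appeal to Kolmogorov.
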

The existence of such a Gaussian process is due to  Kolmogorov consistency theorem.

\section{Proof of Theorem \ref{thm:revmulti_rp}}\label{sec:joint convergence_rp}
First we define some notation and give some definitions which will be used in the proof of Theorem \ref{thm:revmulti_rp} and again in Section \ref{sec:joint convergence_sp} to prove Theorem \ref{thm:symmulti_sp}. 

For a vector $J\in \mathbb N^p$, define a multi-set $S_J$ as
$$S_{J} := \{j_1, j_2, \ldots,j_{p}\}.$$
\begin{definition}\label{def:connected_rev}
	Two vectors $J =(j_1, j_2, \ldots, j_{p})$ and $J' = (j'_1, j'_2, \ldots, j'_{q})$, where $J \in \mathbb N^p$ and $J' \in \mathbb N^q$, are said to be \textit{connected} if
	$S_{J}\cap S_{J'} \neq \emptyset$.	
\end{definition}
%

\begin{definition}\label{def:cluster_rev}
	Given a set of vectors $S= \{J_1, J_2, \ldots, J_\ell \}$, where $J_i \in \mathbb N^{p_i}$ for $1 \leq i \leq \ell$, a subset $T=\{J_{n_1}, J_{n_2}, \ldots, J_{n_k}\}$ of $S$ is called a \textit{cluster} if it satisfies the following two conditions: 
	\begin{enumerate}
		\item[(i)] For any pair $J_{n_i}, J_{n_j}$ from  $T$ one can find a chain of vectors from 
		$T$, which starts with $J_{n_i}$ and ends with $J_{n_j}$ such that any two neighbouring vectors in the chain are connected.
		\item[(ii)] The subset $\{J_{n_1}, J_{n_2}, \ldots, J_{n_k}\}$ can not  be enlarged to a subset which preserves condition (i).
	\end{enumerate}
\end{definition}

Now we define $B_{P_\ell} \subseteq   A_{2p_1} \times A_{2p_2} \times \cdots \times A_{2p_\ell}$, where $A_{2p_j}$, $j=1,2,\ldots,\ell$, is as defined in \eqref{def:A_2p} in Section \ref{sec:cov_rp},
$$A_{2p_j}=\big\{(i_1,\ldots,i_{2p_j})\in \mathbb N^{2p_j}\suchthat \sum_{k=1}^{2p_j}(-1)^ki_k=0 \mbox{ (mod $n$)}, 1\le i_1,\ldots,i_{2p_j}\le n\big\}.$$ 
\begin{definition}\label{def:B_{P_l}_rp}
	Let $\ell \geq 2$ and  $P_\ell = (2p_1,2p_2, \ldots, 2p_\ell ) $. $ B_{P_\ell}$ is a subset of $ A_{2p_1} \times A_{2p_2} \times \cdots \times A_{2p_\ell}$ such that  $ (J_1, J_2, \ldots, J_\ell) \in B_{P_\ell} $ if 
	\begin{enumerate} 
		\item[(i)] $\{J_1, J_2, \ldots, J_\ell\} $ form a cluster, 
		\item[(ii)] each element in  $\displaystyle{\cup_{i=1}^{\ell} S_{J_i} }$ has  multiplicity greater than or equal to two. 
	\end{enumerate}
\end{definition} 

  The following Result gives us the cardinality of $B_{P_\ell}$.
 \begin{result}\label{res:cluster_rp} (Lemma 13, \cite{RC_independent_2020}
For  $\ell \geq 3 $, 
	$$|B_{P_\ell }| = o \big(n^{p_1+p_2 + \cdots + p_\ell -\frac{\ell}{2} }\big).$$
 \end{result}

The next lemma is an easy consequence of Result \ref{res:cluster_rp}.
\begin{lemma}\label{lem:maincluster_rp}
Suppose $\{J_1, J_2, \ldots, J_\ell \} $ form a cluster where $J_i\in A_{{2p}_i}$ with $p_i\geq 1$ for $1\leq i\leq \ell$. Then for $\ell \geq 3,$
\begin{equation}\label{equation:maincluster_rp}
	\frac{1}{ n^{p_1+p_2+ \cdots + p_\ell - \frac{\ell}{2}}} \sum_{A_{2p_1}, A_{2p_2}, \ldots, A_{2p_\ell}} \E\Big[\prod_{k=1}^{\ell}\Big(b_{J_k}(t_k) - \E(b_{J_k}(t_k))\Big)\Big] = o(1),
\end{equation}
where $0<t_1 \leq t_2 \leq \cdots \leq t_\ell $ and  for $k \in \{1,2, \ldots, \ell \} $, 
$$J_k = (j_{k,1}, j_{k,2}, \ldots, j_{k,2p_k} ) \ \mbox{and} \ b_{J_k}(t_k) = b_{j_{k,1}}(t_k) b_{j_{k,2}}(t_k) \cdots b_{j_{k,2p_k}}(t_k).$$	
\end{lemma}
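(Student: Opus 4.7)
The plan is to reduce the estimate to Result~\ref{res:cluster_rp} in two steps: first, identify exactly which tuples in the sum give a nonzero contribution, and second, bound each surviving summand uniformly in $n$.

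The family $\{b_i(t) : i \geq 0,\ t \geq 0\}$ is jointly centred Gaussian with covariance $\delta_{ij}\min(s,t)$, so by Wick's theorem any expectation $\E[b_{i_1}(s_1)\cdots b_{i_m}(s_m)]$ is a sum over perfect matchings of $\{1,\dots,m\}$ in which each pair contributes $\delta_{i_a i_b}\min(s_a,s_b)$. Expanding
\begin{equation*}
\E\Big[\prod_{k=1}^{\ell}(b_{J_k}(t_k) - \E b_{J_k}(t_k))\Big] = \sum_{S\subseteq [\ell]} (-1)^{\ell-|S|}\, \E\Big[\prod_{k\in S} b_{J_k}(t_k)\Big] \prod_{k\notin S}\E b_{J_k}(t_k),
\end{equation*}
I would observe that if some index $i$ has total multiplicity one in the multiset $\bigcup_{k=1}^{\ell} S_{J_k}$, then $i$ lies in exactly one $J_{k_0}$ and occurs there exactly once. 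In every term of the expansion, the factor $b_{i}(t_{k_0})$ sits inside a single expectation sign (either inside $\E[\prod_{k\in S} b_{J_k}(t_k)]$ if $k_0 \in S$, or inside $\E b_{J_{k_0}}(t_{k_0})$ if $k_0 \notin S$), and in either case Wick's formula pairs it with a factor of a different index, producing zero. Hence a nonzero contribution forces every index in $\bigcup_{k=1}^{\ell} S_{J_k}$ to occur with multiplicity at least two; combined with the cluster hypothesis, this places the admissible tuples exactly in $B_{P_\ell}$ of Definition~\ref{def:B_{P_l}_rp}.

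It remains to bound the surviving expectations uniformly. Since each $b_{i}(t_k)$ is Gaussian with variance $t_k$, repeated application of Hölder's inequality yields a constant $C = C(p_1,\dots,p_\ell,\ t_1,\dots,t_\ell)$, independent of $n$ and of the tuple, such that
\begin{equation*}
\Big|\E\Big[\prod_{k=1}^{\ell}(b_{J_k}(t_k) - \E b_{J_k}(t_k))\Big]\Big| \leq C
\end{equation*}
for every $(J_1,\ldots,J_\ell) \in B_{P_\ell}$. Combining this with $|B_{P_\ell}| = o(n^{p_1+\cdots+p_\ell - \ell/2})$ from Result~\ref{res:cluster_rp} (valid for $\ell \geq 3$) and dividing by $n^{p_1+\cdots+p_\ell - \ell/2}$ gives the desired $o(1)$ bound. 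The only real content is the Wick reduction to $B_{P_\ell}$; once that is in place, Result~\ref{res:cluster_rp} does the counting work, and the uniform moment bound is routine.
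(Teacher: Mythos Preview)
Your proposal is correct and follows essentially the same approach as the paper: reduce the sum to $B_{P_\ell}$ by showing only tuples with every index of multiplicity $\ge 2$ contribute, bound each surviving expectation by a constant independent of $n$, and then invoke Result~\ref{res:cluster_rp}. The only cosmetic difference is that you justify the reduction via Wick's formula and an inclusion--exclusion expansion, whereas the paper argues directly from independence of distinct $b_i$'s and $\E b_i(t)=0$; similarly, you cite H\"older for the uniform moment bound while the paper writes out the explicit Gaussian moment $\E|b_i(t_j)|^{2m} = \frac{(2m)!}{2^m m!}\,t_\ell^m$.
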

\begin{proof} First observe that   $\E\Big[\prod_{k=1}^{\ell}\Big(b_{J_k}(t_k) - \E(b_{J_k}(t_k))\Big)\Big]$ will be non-zero only if each $b_i(t)$ appears at least twice in the collection $\{b_{j_{k,1}}(t_k), b_{j_{k,2}}(t_k), \ldots ,b_{j_{k,2p_k}}(t_k); 1\leq k\leq \ell\}$, because $\E(b_i(t))=0$ for $t\geq 0$. Therefore
\begin{equation}\label{eqn:equality_reduction_rp}
\sum_{A_{{2p}_1}, \ldots, A_{{2p}_\ell}} \hspace{-3pt}\E\Big[\prod_{k=1}^{\ell}\Big(b_{J_k}(t_k) - \E(b_{J_k}(t_k))\Big)\Big]=\sum_{(J_1,\ldots,J_\ell)\in B_{P_\ell}} \hspace{-3pt} \E\Big[\prod_{k=1}^{\ell}\Big(b_{J_k}(t_k) - \E(b_{J_k}(t_k))\Big)\Big],
\end{equation} 
where $B_{P_\ell}$ as in Definition \ref{def:B_{P_l}_rp}. Also note that  for $0<t_1 \leq t_2 \leq \cdots \leq t_\ell $ and $m\in \mathbb N$,  
\begin{equation*}\label{eqn:higher moment finite_rp}
\sup_{1\leq j\leq \ell}\E|b_i(t_j)|^{2m}=\frac{(2m)!}{2^m m!}t_\ell^m,
\end{equation*}  
as $\{b_i(t);t\geq 0\}_{i\geq 0}$ are independent standard Brownian motions. Therefore for fixed $0<t_1 \leq t_2 \leq \cdots \leq t_\ell $ and $p_1,p_2,\ldots,p_\ell \geq 1$, there exists $\alpha_\ell>0$, which depends only on $t_\ell$ and $p_1,p_2,\ldots,p_\ell$, such that  
\begin{equation}\label{eqn:modulus finite_rp}
\Big|\E\big[\prod_{k=1}^{\ell}\big(b_{J_k}(t_k) - \E(b_{J_k}(t_k))\big)\big]\Big|\leq \alpha_\ell
\end{equation} 
for all $(J_1, J_2, \ldots, J_\ell)\in A_{2p_1}\times A_{2p_2}\times \cdots \times A_{2p_\ell}$.
 
Now using \eqref{eqn:equality_reduction_rp} and \eqref{eqn:modulus finite_rp}, we get
\begin{align*}
	 \sum_{A_{2p_1}, A_{2p_2}, \ldots, A_{2p_\ell}} \Big|\E\big[\prod_{k=1}^{\ell}\big(b_{J_k}(t_k) - \E(b_{J_k}(t_k))\big)\big]\Big|
 \leq  \sum_{(J_1,J_2,\ldots,J_\ell)\in B_{P_\ell}} \alpha_{\ell} 
\ = |B_{p_\ell}| \ \alpha_\ell.
\end{align*}
Therefore \eqref{equation:maincluster_rp} follows from the last inequality and Result \ref{res:cluster_rp}. This completes the proof of lemma. 
\end{proof}
Now we shall use the above lemma and result  to prove Theorem \ref{thm:revmulti_rp}.

\begin{proof}[Proof of Theorem \ref{thm:revmulti_rp}] 
 
We use Cram\'er-Wold theorem and method of moments to prove Theorem \ref{thm:revmulti_rp}. So it is enough to show that, for $0<t_1 \leq t_2 \leq \cdots \leq t_\ell $ and $p_1, p_2, \ldots , p_\ell \geq 1$,
$$\lim_{n\to\infty}\E[w_{{p}_1}(t_1) w_{{p}_2}(t_2) \cdots w_{{p}_\ell}(t_\ell)]=\E[N_{{p}_1}(t_1)N_{{p}_2}(t_2) \cdots N_{{p}_\ell}(t_\ell)].$$
By using trace formula (\ref{trace formula RC_n}), we get  
\begin{align}\label{eqn:expectation_thm2_rp}
&\quad \E[w_{{p}_1}(t_1) w_{{p}_2}(t_2) \cdots w_{{p}_\ell}(t_\ell)] \\
&= \frac{1}{n^{p_1 + p_2 + \cdots +p_\ell -\frac{\ell}{2}}} \sum_{A_{2p_1}, A_{2p_2}, \ldots, A_{2p_\ell}} \E\big[ (b_{J_1} - \E b_{J_1}) (b_{J_2} - \E b_{J_2})  \cdots (b_{J_\ell} - \E b_{J_\ell})\big],\nonumber
 \end{align}
 where for each $k= 1, 2, \ldots, \ell, \  J_k = (j_{k,1},j_{k,2},\ldots,j_{k, 2p_k})\in A_{2p_k}$ and $b_{J_k}= b_{j_{k,1}}(t_k)\cdots b_{j_{k, 2p_k}}(t_k)$.

First observe that, for a fixed $J_1,J_2,\ldots,J_\ell$, if there exists a $k\in\{1,2,\ldots,\ell\}$ such that $J_k$ is not connected with any $J_i$ for $i\neq k$, then 
$$\E\big[ (b_{J_1} - \E b_{J_1}) (b_{J_2} - \E b_{J_2})  \cdots (b_{J_\ell} - \E b_{J_\ell})\big]=0$$
due to the independence of Brownian motions $\{b_i(t)\}_{i\geq 0}$.

%
Now if $\ell$ is odd then there will be a cluster of odd length in $\{ J_1, J_2, \ldots, J_\ell \}$ and hence from Result \ref{res:cluster_rp}, we get
$$ \lim_{n\tends \infty}  \E[w_{{p}_1}(t_1)  w_{{p}_2}(t_2) \cdots w_{{p}_\ell}(t_\ell)] = 0.$$ 
 
Similarly, if $\ell$ is even then the contribution due to $\{ J_1, J_2, \ldots, J_\ell \}$ to  $\E[w_{{p}_1}(t_1)  w_{{p}_2}(t_2) \cdots w_{{p}_\ell}(t_\ell)] $ is $O(1)$ only when $\{ J_1, J_2, \ldots, J_\ell\}$  decomposes into clusters of length 2. Therefore from \eqref{eqn:expectation_thm2_rp}, we get
\begin{align*} \label{eq:multisplit}
 & \quad \lim_{n\tends \infty}  \E[w_{{p}_1}(t_1)  w_{{p}_2}(t_2) \cdots w_{{p}_\ell}(t_\ell)] \\
 & =\lim_{n\to\infty} \frac{1}{n^{p_1 + p_2 + \cdots +p_\ell -\frac{\ell}{2}}} \sum_{\pi \in \mathcal P_2(\ell)} \prod_{i=1}^{\frac{\ell}{2}}  \sum_{A_{p_{y(i)}},\ A_{p_{z(i)}}} \E\big[ (b_{J_{y(i)}} - \E b_{J_{y(i)}}) (b_{J_{z(i)}} - \E b_{J_{z(i)}})\big],
 \end{align*}
where  $\pi = \big\{ \{y(1), z(1) \}, \ldots , \{y(\frac{\ell}{2}), z(\frac{\ell}{2})  \} \big\}\in \mathcal P_2(\ell)$ and $\mathcal P_2(\ell)$ is the set of all pair-partitions of $ \{1, 2, \ldots, \ell\} $. Using Theorem \ref{thm:revcovar_rp}, from the last equation, we get
\begin{equation}\label{eqn:product of expectation_rp}
 \lim_{n\tends \infty}  \E[w_{{p}_1}(t_1)  w_{{p}_2}(t_2) \cdots w_{{p}_\ell}(t_\ell)]
  =\sum_{\pi \in P_2(\ell)} \prod_{i=1}^{\frac{\ell}{2}} \lim_{n\tends \infty} \E[w_{p_{y(i)}} (t_{y(i)}) w_{p_{z(i)}} (t_{z(i)})].
  \end{equation}
Now from Corollary \ref{res:existence of Gaussian process_rp}, there exists a centred Gaussian process $\{N_{p}(t);t\geq 0\}_{p\geq 1}$ such that 
$$\E(N_{p}(t_1)N_{q}(t_2))=\lim_{n\to\infty}\E(w_{p}(t_1)w_{q}(t_2)).$$
Therefore using Wick's formula,  from \eqref{eqn:product of expectation_rp} we get 
\begin{align*}
\lim_{n\tends \infty}  \E[w_{{p}_1}(t_1) w_{{p}_2}(t_2) \cdots w_{{p}_\ell}(t_\ell)]
 &=\E[ N_{{p}_1}(t_1)N_{{p}_2}(t_2) \cdots N_{{p}_\ell}(t_\ell) ].
\end{align*}
This completes the proof of Theorem \ref{thm:revmulti_rp}. 
\end{proof}

The following remark is a continuation of Remark \ref{rem:RC_odd}. 
\begin{remark}
Suppose $RC_n(t)$ is the reverse circulant matrix with entries $\{\frac{b_i(t)}{\sqrt n}; 1\le i\le n\}$, where $\{b_i(t)\}_{i\geq 1}$ is a sequence of independent standard Brownian motion.
From Chapter 1 of  \cite{bosesaha_circulantbook}, the eigenvalues $\lambda_0,\lambda_1,\ldots,\lambda_{n-1}$ of $RC_n(t)$ are given by 
\begin{equation} \label{def:eigen,RC}
 \left\{\begin{array}{ll} 	 
            \lambda_0 = \frac{1}{\sqrt n}\displaystyle\sum_{j=1}^{n} b_j(t), & \\
		 	\lambda_{\frac{n}{2}} = \frac{1}{\sqrt n}\displaystyle \sum_{j=1}^{n} (-1)^j b_j(t), &   \text{if n} \mbox{ is even},\\
				\lambda_k = - \lambda_{n-k},  & \mbox{for } 1 \leq k \leq \lfloor\frac{n-1}{2}\rfloor. 	 	 
		 	  \end{array}\right.	
 \end{equation}
If we consider linear eigenvalue statistics with test function $\phi(x)=x^{2p+1}$, $p\ge 1,$ then  
$$ \sum_{k=0}^{n-1} \phi (\lambda_k)= \sum_{k=1}^{n}(\lambda_k)^{2p+1}= \left\{\begin{array}{ll}
\lambda_0^{2p+1} & \mbox{ if } n \mbox{ is odd},\\\\
\lambda_0^{2p+1}+\lambda_{\frac{n}{2}}^{2p+1} & \mbox{ if } n \mbox{ is even}.
\end{array}
\right.$$
Therefore, by using CLT and continuous mapping theorem, we get 
$$\sum_{k=0}^{n-1} \phi (\lambda_k)\stackrel{d}{\longrightarrow} (N(0,t))^{2p+1}, \mbox{ if }n \mbox{ is odd} \mbox{ and }  n\to \infty.$$
It is also easy to see that 
$$\sum_{k=0}^{n-1} \phi (\lambda_k)\stackrel{d}{\longrightarrow} N_1^{2p+1}+N_2^{2p+1}, \mbox{ if }n \mbox{ is even} \mbox{ and }  n\to \infty,$$
where $N_1$ and $N_2$ are independent $N(0,t)$ random variables.

Similarly, for $\phi(x)=\sum_{k=0}^p a_kx^{2k+1}$, $p\ge 1$, 
\begin{align*}
\sum_{k=0}^{n-1} \phi (\lambda_k)& \stackrel{d}{\longrightarrow} \sum_{k=0}^p a_k (N(0,t))^{2k+1}\mbox{ if }n \mbox{ is odd} \mbox{ and }  n\to \infty,\\
\sum_{k=0}^{n-1} \phi (\lambda_k)& \stackrel{d}{\longrightarrow} \sum_{k=0}^p  a_k (N_1^{2k+1}+N_2^{2k+1}) \mbox{ if }n \mbox{ is even} \mbox{ and }  n\to \infty. 
\end{align*}
Therefore, for any odd degree monomial or a polynomial with only odd degree terms,  the behaviour of the linear eigenvalue statistics of $RC_n(t)$ depends on $n$.  
\end{remark}
\section{Proof of Theorem \ref{thm:revprocess}}\label{sec:process convergence_rp} 
We first state some standard notation and results on process convergence which we shall need.
Suppose $C_{\infty}:= C[0, \infty)$ is the space of all real-valued continuous functions on $[0, \infty)$. Then $(C_{\infty}, \rho )$ is a metric space with the following metric.
$$ \rho(X,Y) = \sum_{k=1}^{\infty} \frac{1}{2^k} \frac{\rho_k(X,Y)}{1+ \rho_k(X,Y)} \ \ \forall \ X, Y \in C_{\infty},$$  
where $\rho_k(X,Y) = \sup_{0 \leq t \leq k} |X(t)- Y(t)|$ is the usual metric on $C_k := C[0,k]$, the space of all continuous real-valued functions on $[0, k]$. Suppose $\mathcal{C_{\infty}}$ and $\mathcal{C}_k$ are the $\sig$-field generated by the open sets of $C_{\infty}$ and $C_k$, respectively. 
\begin{definition}\label{def:processconvergence}
Let $\{ \P_n \}$ and $\P$ be probability measures on $( C_{\infty},\mathcal{C_{\infty}} )$. If 
\begin{equation}\label{weak_convergence_C_infinity}
 \P_n f :=  \int_{C_{\infty}} f d \P_n \longrightarrow \P f := \int_{C_{\infty}} f d \P \ \mbox{ as }n\to\infty,
 \end{equation}
 for every bounded, continuous real-valued function $f$ on $C_{\infty}$, then we say $\P_n$ converge to $\P$ \textit{weakly} or in \textit{distribution}. We denote it by $\P_n \stackrel{\mathcal D}{\rightarrow} \P $.
 
 Let $\{\boldsymbol{X_n}\}_{n\geq 1}$ = $\{ X_n(t); t \geq 0 \}_{n\geq 1}$ be a sequence of real-valued continuous processes and $\boldsymbol{X}=\{X(t);t\ge 0\}$ be a real-valued continuous process.
 We say $\boldsymbol{X_n}$ converge to $\boldsymbol{X}$ \textit{weakly} or in \textit{distribution} if $\P_n \stackrel{\mathcal D}{\rightarrow} \P $, where $\P_n$ and $\P$ are the probability measures on $( C_{\infty},\mathcal{C_{\infty}} )$ induced by $\boldsymbol{X_n}$ and $\boldsymbol X$, respectively. We denote it by $\boldsymbol{X_n} \stackrel{\mathcal D}{\rightarrow} \boldsymbol{X}$.
 \end{definition}
 The following result gives a necessary and sufficient  condition for the weak convergence of $\P_n$ to $\P$.
\begin{result} \label{result:process convergance}(Theorem 3, \cite{Ward1970})  
	Suppose $\{ \P_n \}$ and $\P$ are probability measures on  $( C_{\infty},\mathcal{C_{\infty}} )$. Then  $\P_n \stackrel{\mathcal D}{\rightarrow} \P$ if and only if:
	\vskip3pt
	\noindent \textbf{(i)} the finite-dimensional distributions of $\P_n$ converge weakly to those of $\P$, that is,
	\begin{equation} \label{eqn:P_nGamma convergence}
	\P_n \pi^{-1}_{t_1\ldots t_r} \stackrel{\mathcal D}{\rightarrow} \P \pi^{-1}_{t_1\ldots t_r} \ \ \forall \ r \in \mathbb{N} \ \mbox{and} \ \forall \ r\mbox{-tuples} \ t_1, \ldots, t_r, \nonumber
	\end{equation}
	where $\pi_{t_1\ldots t_r}$ is the natural projection from $C_{\infty}$ to $\mathbb{R}^r$ for all $r \in \mathbb{N}$, and 
	\vskip3pt
	\noindent \textbf{(ii)} the sequence $\{ \P_n \} $ is tight.
\end{result}
The following result provides a sufficient condition for the tightness of the probability measures $\{ \P_n\}$.

\begin{result} (Theorem I.4.3, \cite{Ikeda1981})\label{result:tight2}
	Suppose $\{\boldsymbol{X_n}\}_{n\geq 1}$ = $\{ X_n(t); t \geq 0 \}_{n\geq 1}$ is a sequence of continuous processes satisfying the following two conditions:
	\vskip3pt
    \noindent	\textbf{(i)} there exists positive constants $M$ and $\gamma$ such that
	$$\E{|X_n(0)|^ \gamma } \leq M \ \ \ \forall \ n \in \mathbb{N},$$ 	 
    \noindent	\textbf{(ii)} there exists positive constants $\alpha, \ \beta$ and $M_T$, $T = 1, 2, \ldots,$ such that
	$$\E{|X_n(t)- X_n(s)|^ \alpha } \leq M_T |t-s|^ {1+ \beta} \ \ \ \forall \ n \in \mathbb{N} \ \mbox{and } t,s \in[0,\ T],(T= 1, 2, \ldots,).$$
	Then the sequence $\{ \boldsymbol{X_n} \} $ is tight and the sequence of probability measures $\{ \P_n \}$ on $( C_{\infty},\mathcal{C_{\infty}} )$, induced by $\{ \boldsymbol{X_n} \} $ is also tight.	
	\end{result}
Note that in our situation $X_n(t)$ is $w_{p}(t)$, where $w_{p}(t)$ is as defined in (\ref{eqn:w_p(t)_rp}). Now from Result 
\ref{result:process convergance} and Result \ref{result:tight2}, to complete the proof of Theorem \ref{thm:revprocess}, it is sufficient to prove the following two propositions: 
\begin{proposition} \label{pro:finite convergence_rp}
	For each $ p\geq  1$, suppose $0<t_1<t_2 \cdots <t_r$. Then as $n \tends \infty$
	\begin{equation*}
	(w_{p}(t_1), w_{p}(t_2), \ldots , w_{p}(t_r)) \stackrel{\mathcal D}{\rightarrow} (N_{p}(t_1), N_{p}(t_2), \ldots , N_{p}(t_r)).
	\end{equation*}	
\end{proposition}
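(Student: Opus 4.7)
The plan is to derive Proposition \ref{pro:finite convergence_rp} as an immediate specialization of Theorem \ref{thm:revmulti_rp}. Specifically, choosing $p_1 = p_2 = \cdots = p_r = p$ in Theorem \ref{thm:revmulti_rp} gives exactly the finite-dimensional convergence
\begin{equation*}
(w_p(t_1), w_p(t_2), \ldots, w_p(t_r)) \stackrel{\mathcal D}{\rightarrow} (N_p(t_1), N_p(t_2), \ldots, N_p(t_r))
\end{equation*}
for arbitrary $0 < t_1 < t_2 < \cdots < t_r$, which is precisely the statement to be proved.

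I would begin by spelling out the reduction: the hypotheses of Theorem \ref{thm:revmulti_rp} (namely $p_i \ge 1$ and strictly ordered positive times) are all satisfied after this specialization, so no extension of scope is needed. I would then verify that the limiting object agrees with the process $\{N_p(t); t \geq 0\}$ produced by Corollary \ref{res:existence of Gaussian process_rp}: the latter is the centred Gaussian process constructed via Kolmogorov consistency with the covariance function prescribed by \eqref{eqn:cov_rp}, and its finite-dimensional marginals at $(t_1, \ldots, t_r)$ are, by construction, the centred Gaussian vector with the same covariance structure as the limit exhibited in Theorem \ref{thm:revmulti_rp} when all $p_i$ equal $p$. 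Hence the two limiting objects are identical in distribution and the conclusion follows.

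There is no substantive obstacle at this stage, because the heavy lifting has already been completed in the proof of Theorem \ref{thm:revmulti_rp}: the Cram\'er–Wold device reduces joint convergence to convergence in distribution of linear combinations; the method of moments then reduces this to matching all mixed moments; the cluster analysis in Lemma \ref{lem:maincluster_rp}, combined with Result \ref{res:cluster_rp}, forces odd-length clusters to contribute $o(1)$ and isolates the non-trivial contributions to pair-partitions of $\{1, 2, \ldots, \ell\}$; and Wick's formula identifies this pair-partition sum as the moment of a centred Gaussian vector with the right covariance. All of these ingredients apply verbatim with $p_i \equiv p$, so the proof of Proposition \ref{pro:finite convergence_rp} reduces to citing Theorem \ref{thm:revmulti_rp}.
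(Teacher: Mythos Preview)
Your proposal is correct and matches the paper's own treatment exactly: the paper simply notes that Proposition \ref{pro:finite convergence_rp} is a particular case of Theorem \ref{thm:revmulti_rp} once we take $p_i = p$. Your additional verification that the limiting object coincides with the process from Corollary \ref{res:existence of Gaussian process_rp} is a helpful elaboration but not strictly needed.
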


\begin{proposition} \label{pro:tight_rp}
	For each $p \geq1$, there exists positive constants $M$ and $\gamma$ such that
	\begin{equation*} \label{eq:tight3}
	\E{|w_{p}(0)|^ \gamma } \leq M \ \ \ \forall \ n \in \mathbb{N}, 	 
	\end{equation*}
	there exists positive constants $\alpha, \ \beta $ and $M_T$, $T= 1, 2, \ldots,$ such that
	\begin{equation} \label{eq:tight4_rp}
	\E{|w_{p}(t)- w_{p}(s)|^ \alpha } \leq M_T |t-s|^ {1+ \beta} \ \ \ \forall \ n \in \mathbb{N} \ \mbox{and } t,s \in[0,\ T],(T= 1, 2, \ldots,).
	\end{equation}
	Then $\{ w_{p}(t) ; t \geq 0 \}$ is tight.
\end{proposition}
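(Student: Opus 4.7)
The first condition is immediate: since $b_i(0) = 0$ almost surely, $RC_n(0) = 0$, so $w_{p}(0) = 0$ and $\E|w_{p}(0)|^\gamma = 0$ for every $\gamma > 0$ (take $M = 1$).

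For the second condition the plan is to choose $\alpha = 4$ and $\beta = 1$, i.e., to prove that for every $T > 0$ there is a constant $M_T > 0$, independent of $n$, with
\begin{equation*}
\E|w_{p}(t) - w_{p}(s)|^4 \leq M_T (t-s)^2, \qquad s, t \in [0, T],\ n \in \N.
\end{equation*}
Write $u_i = b_i(s)$ and $v_i = b_i(t) - b_i(s)$, independent centred Gaussians of variance $s$ and $t-s$ respectively. Expanding $\prod_\ell (u_{j_\ell} + v_{j_\ell}) - \prod_\ell u_{j_\ell}$ and using the trace formula \eqref{trace formula RC_n},
\begin{equation*}
w_{p}(t) - w_{p}(s) = \frac{1}{n^{p-1/2}} \sum_{\emptyset \neq S \subseteq \{1,\ldots,2p\}} \sum_{J \in A_{2p}} \bigl[\, v_{J,S}\, u_{J,S^c} - \E(v_{J,S}\, u_{J,S^c}) \,\bigr],
\end{equation*}
where $J = (j_1,\ldots,j_{2p})$, $v_{J,S} = \prod_{\ell \in S} v_{j_\ell}$ and $u_{J,S^c} = \prod_{\ell \notin S} u_{j_\ell}$.

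Raising to the fourth power and taking expectation produces a four-fold sum over $(J^{(1)},\ldots,J^{(4)}) \in A_{2p}^4$ and marker tuples $(S_1,\ldots,S_4)$ with each $S_m$ nonempty. By independence of $\{u_i\}$ and $\{v_i\}$, every joint expectation factors into a $u$-part times a $v$-part, and Wick's formula reduces each part to a sum over perfect matchings of the Gaussians involved; the centering $-\E(\cdot)$ per factor kills matchings in which some factor is matched entirely within itself. Crucially, each $v$-pair contributes $(t-s)$, and the total number of $v$'s across the four factors equals $|S_1|+|S_2|+|S_3|+|S_4| \geq 4$, so every surviving term carries $(t-s)^k$ with $k \geq 2$; for $|t-s| \leq T$ this is bounded by $T^{k-2}(t-s)^2$.

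It remains to bound the index sum by $O(n^{4p-2})$ uniformly in $n$, so that the prefactor $n^{-(4p-2)}$ is absorbed into a constant. This is handled along the lines of Lemma \ref{lem:maincluster_rp} and Result \ref{res:cluster_rp}: the leading contribution arises only when $J^{(1)},\ldots,J^{(4)}$ partition into two pair-clusters in the sense of Definition \ref{def:cluster_rev} (with Wick pairings now constrained to match $u$'s with $u$'s and $v$'s with $v$'s), and all other configurations are $o(n^{4p-2})$ by the free-variable counting forced by the constraints $\sum_k (-1)^k j^{(m)}_k \equiv 0 \pmod n$. Combining this with the $(t-s)^2$ factor gives the required inequality, after which Result \ref{result:tight2} delivers tightness of $\{w_p(t):t\geq 0\}$. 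The main obstacle is this last $n$-bookkeeping: the $u/v$-marker refinement generates many more configurations than the pure-$u$ analysis of Section \ref{sec:joint convergence_rp}, and one must verify both that the pair-cluster geometry still controls the leading order under type-preserving Wick pairing, and that no configuration with fewer than four $v$-variables can compensate via an enhanced power of $n$. The estimates from the proof of Theorem \ref{thm:revmulti_rp}, refined to carry the marker data, should yield these bounds.
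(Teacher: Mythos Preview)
Your approach is correct and essentially parallel to the paper's, though organised differently. The paper writes $RC_n(t)=RC'_n(t-s)+RC_n(s)$, expands $(RC'_n+RC_n(s))^{2p}-(RC_n(s))^{2p}$ binomially, and then uses the distributional scaling $b_i(t)-b_i(s)\stackrel{d}{=}\sqrt{t-s}\,N(0,1)$ to factor out $\sqrt{t-s}$ once and for all; this consolidates all the $u/v$ combinations into a single random variable $Z_{I_{2p}}$ with coefficients bounded in terms of $T$, after which the fourth moment is $\frac{(t-s)^2}{n^{4p-2}}\sum \E\prod_{k=1}^4(Z-\E Z)$ and the cluster analysis (Cases I--III, using $|B_{P_2}|=O(n^{2p-1})$ and $|B_{P_4}|=o(n^{4p-2})$) finishes exactly as you outline. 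Your version keeps the subsets $S_m$ explicit and reads off $(t-s)^2$ from the fact that every surviving Wick product pairs at least four $v$-variables; this is the same mechanism unpacked.

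The ``main obstacle'' you flag is not, in fact, a genuine difficulty. The cluster bounds $|B_{P_\ell}|$ of Result \ref{res:cluster_rp} are purely combinatorial statements about the index tuples $(J^{(1)},\ldots,J^{(4)})\in A_{2p}^4$ and are completely insensitive to the marker data $(S_1,\ldots,S_4)$: the markers only affect the \emph{value} of each expectation, and since all moments of $u_i,v_i$ on $[0,T]$ are bounded by constants depending only on $p,T$, the sum over markers contributes at most a factor $(2^{2p}-1)^4$ times a uniform bound. Likewise, your concern about ``fewer than four $v$-variables'' is vacuous, since each $S_m\neq\emptyset$ already forces $\sum|S_m|\geq 4$. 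So your sketch closes without further refinement; the paper's repackaging via $Z_{I_{2p}}$ simply avoids having to say this.
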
 
    Proposition \ref{pro:finite convergence_rp} yields the finite dimensional convergence of $\{w_{p}(t);t \geq 0\}$ and Proposition \ref{pro:tight_rp} provides the tightness of the process $\{w_{p}(t);t \geq 0\}$.
Proposition \ref{pro:finite convergence_rp} is a particular case of Theorem \ref{thm:revmulti_rp}, once we take $p_i = p$.

\begin{proof}[Proof of Proposition \ref{pro:tight_rp}] Note that $w_{p}(0)= 0$ for all $p \geq 1$. 
	So to complete the proof, we shall prove (\ref{eq:tight4_rp}) of Proposition \ref{pro:tight_rp} for $\alpha = 4$ and $\beta = 1$. 
	First recall $w_{p}(t)$ from (\ref{eqn:w_p(t)_rp}),
$$ w_{p}(t) = \frac{1}{\sqrt{n}} \bigl\{ \Tr(RC_n(t))^{2p} - \E[\Tr(RC_n(t))^{2p}]\bigr\}.$$
Suppose $p \geq 1$ is fixed and $t,s \in[0,T] $, for some $T \in \mathbb{N}$. Then
\begin{align} \label{eqn:w_p(t-s)_rp}
 w_{p}(t) - w_{p}(s) &= \frac{1}{\sqrt{n}} \big[ \Tr(RC_n(t))^{2p} - \Tr(RC_n(s))^{2p} - \E[\Tr(RC_n(t))^{2p} - \Tr(RC_n(s))^{2p}]\big].
\end{align}
Since $0< s <t $, we have
\begin{align*}
RC_n(t) &= RC_n(t) - RC_n(s) + RC_n(s)
 = RC'_n(t-s) + RC_n(s)
\end{align*}
where $RC'_n(t-s)$ is a reverse circulant matrix with entries $\{ b_n(t) -b_n(s)\}_{n \geq 0}$. 
Now
 \begin{equation*}
 (RC_n(t))^{2p}  = [RC'_n(t-s)+ RC_n(s)]^{2p}.
 \end{equation*} 
 Using binomial expansion on right hand side of the above equation, we get 
\begin{align} \label{eqn:C_n^p(t)- C_n^p(s)}
(RC_n(t))^{2p} - (RC_n(s))^{2p} & = (RC'_n(t-s))^{2p} + \sum_{d=1}^{2p-1} \binom{2p}{d} (RC'_n(t-s))^d (RC_n(s))^{2p-d}, \nonumber 
\end{align} 
and hence
\begin{equation} \label{eqn:trace t-s_rp}
 \Tr (RC_n(t))^{2p} - \Tr (RC_n(s))^{2p}   
 = \Tr[ (RC'_n(t-s) )^{2p}] + \sum_{d=1}^{2p-1} \binom{2p}{d} \Tr [(RC'_n(t-s))^d (RC_n(s))^{2p-d}].
\end{equation}
Now from the trace formula (\ref{trace formula RC_n}), we get
\begin{align} \label{eqn:tracemultply_rp}
	\Tr[ (RC'_n(t-s) )^{2p}] & =\frac{1}{n^{p-1}}\sum_{A_{2p}} b'_{I_{2p}}(t-s),  \\
	\Tr [(RC'_n(t-s))^d (RC_n(s))^{2p-d}] & = \frac{1}{n^{p-1}}\sum_{A_{2p}}  b'_{I_{d}}(t-s) b_{I_{2p-d}}(s), \nonumber
\end{align}
where 
\begin{align*}
b'_{I_{2p}}(t-s) & = (b_{i_1}(t)-b_{i_1}(s)) (b_{i_2}(t) - b_{i_2}(s)) \cdots (b_{i_{2p}}(t) - b_{i_{2p}}(s)), \\
b'_{I_{d}}(t-s) & = (b_{i_1}(t)-b_{i_1}(s)) (b_{i_2}(t) - b_{i_2}(s)) \cdots (b_{i_{d}}(t) - b_{i_{d}}(s)), \\
b_{I_{2p-d}}(s) & = b_{i_{d+1}}(s) b_{i_{d+2}}(s)\cdots b_{i_{2p}}(s).
\end{align*}
Using the fact that $b_n(t) -b_n(s)$ has same distribution as $b_n(t-s)$ and  $b_n(t)$ has same distribution as $N(0, \sqrt{t})$, we have
\begin{align}  \label{eqn:same distrib_rp}
b'_{I_{2p}}(t-s) & \distas{D} x_{i_1} x_{i_2} \cdots x_{i_{2p}} \distas{D} (t-s)^{p} \frac{x_{i_1}}{\sqrt{(t-s)}} \frac{x_{i_2}}{\sqrt{(t-s)}} \cdots  \frac{x_{i_{2p}}} {\sqrt{(t-s)}} = (t-s)^{p} X_{I_{2p}}, \mbox{ say } \\
b'_{I_{d}}(t-s)  & \distas{D} (t-s)^{\frac{d}{2}} \frac{x_{i_1}}{\sqrt{(t-s)}} \frac{x_{i_2}}{\sqrt{(t-s)}} \cdots  \frac{x_{i_{d}}} {\sqrt{(t-s)}} = (t-s)^{\frac{d}{2}} X_{I_{d}}, \mbox{ say} \nonumber \\
b'_{I_{2p-d}}(s) & \distas{D} y_{i_{d+1}} y_{i_{d+2}} \cdots y_{i_{2p}} \distas{D} {s}^{\frac{2p-d}{2}} \frac{y_{i_{d+1}}}{\sqrt{s}} \frac{y_{i_{d+2}}}{\sqrt{s}} \cdots  \frac{y_{i_{2p}}} {\sqrt{s}} = {s}^{\frac{2p-d}{2}} Y_{I_{2p-d}}, \mbox{ say } \nonumber
\end{align}
where $x_{i_r}$'s and $y_{i_r}$'s are independent normal random variable with mean zero, variance $(t-s)$ and $s$, respectively. We denotes $x_1 \distas{D} x_2$ if $x_1$ and $x_2$ have same distribution. 

Now, using (\ref{eqn:tracemultply_rp}) and (\ref{eqn:same distrib_rp}) in (\ref{eqn:trace t-s_rp}), we get
\begin{align} \label{eqn:Z_p_rp}
 \Tr (RC_n(t))^{2p}- \Tr (RC_n(s))^{2p}  & \distas{D}  \frac{1}{n^{p-1}}\Big(\sum_{A_{2p}} (t-s)^{p} X_{I_{2p}} + \sum_{d=1}^{p-1} \binom{p}{d}  (t-s)^{\frac{d}{2}} {s}^{\frac{2p-d}{2}} X_{I_{d}}  Y_{I_{2p-d}}\Big) \nonumber \\
 & = \frac{\sqrt{t-s}}{n^{p-1}}\Big(\sum_{A_{2p}} (t-s)^{\frac{2p-1}{2}} X_{I_{2p}} + \sum_{d=1}^{p-1} \binom{p}{d}  (t-s)^{\frac{d-1}{2}} {s}^{\frac{2p-d}{2}} X_{I_{d}}  Y_{I_{2p-d}}\Big) \nonumber \\
 & = \frac{\sqrt{t-s}}{n^{p-1}}\sum_{A_{2p}} Z_{I_{2p}}, \mbox{ say}.
\end{align} 
On using (\ref{eqn:Z_p_rp}) in (\ref{eqn:w_p(t-s)_rp}), we get
\begin{align*} 
 w_{p}(t) - w_{p}(s) \distas{D} \frac{\sqrt{(t-s)}}{n^{\frac{2p-1}{2}}}\sum_{A_{2p}} (Z_{I_{2p}} - \E[Z_{I_{2p}}])
\end{align*}
and therefore 
\begin{align} \label{eqn:w_p^4_rp}
\E[w_{p}(t) - w_{p}(s)]^4 & = \frac{(t-s)^2}{n^{4p-2}}\sum_{A_{2p}, A_{2p}, A_{2p}, A_{2p}} \E \big[ (Z_{I_{2p}} - \E Z_{I_{2p}}) (Z_{J_{2p}} - \E Z_{J_{2p}}) \\
& \qquad \qquad \qquad \qquad \qquad (Z_{K_{2p}} - \E Z_{K_{2p}}) (Z_{L_{2p}} - \E Z_{L_{2p}}) \big], \nonumber
\end{align}
where $I_{2p}, J_{2p}, K_{2p}, L_{2p}$ are vectors in $A_{2p}$. Depending on connectedness between $I_{2p}, J_{2p}, K_{2p}$ and $L_{2p}$, the following three cases arise: \\

\noindent \textbf{Case I.} \textbf{At least one of $I_{2p}, J_{2p}, K_{2p}, L_{2p}$ is not connected with remaining ones:}  Without loss of generality suppose $I_{2p}$ is not connected with $J_{2p}, K_{2p}$ or $L_{2p}$, then due to independence of entries
$$\E \big[ (Z_{I_{2p}} - \E Z_{I_{2p}}) (Z_{J_{2p}} - \E Z_{J_{2p}}) (Z_{K_{2p}} - \E Z_{K_{2p}}) (Z_{L_{2p}} - \E Z_{L_{2p}})\big] = 0.$$
If there are more than one vectors which are not connected with remaining ones, then again by the independence of entries
$$\E \big[ (Z_{I_{2p}} - \E Z_{I_{2p}}) (Z_{J_{2p}} - \E Z_{J_{2p}}) (Z_{K_{2p}} - \E Z_{K_{2p}}) (Z_{L_{2p}} - \E Z_{L_{2p}})\big] = 0.$$ 
 
 \noindent \textbf{Case II.} \textbf{$I_{2p}$ is connected with one of $J_{2p}, K_{2p}, L_{2p}$ only and the remaining two of $J_{2p}, K_{2p}, L_{2p}$ are also connected with themselves only:} Without loss of generality suppose $I_{2p}$ is connected with $J_{2p}$ only and $K_{2p}$ is connected with $L_{2p}$ only. 
 With this conditions, the right side of (\ref{eqn:w_p^4_rp}) can be written as
\begin{align} \label{eqn:case II_rp} 
\frac{(t-s)^2}{n^{4p-2}} \sum_{ A_{2p}, A_{2p}} \E \big[ (Z_{I_{2p}} - \E Z_{I_{2p}}) (Z_{J_{2p}} - \E Z_{J_{2p}}) \big] 
 \sum_{A_{2p}, A_{2p}} \E \big[ (Z_{K_{2p}} - \E Z_{K_{2p}}) (Z_{L_{2p}} - \E Z_{L_{2p}}) \big].
\end{align}
We denote the above expression by $S_1$. From the definition of $Z_{J_{2p}}$, given in (\ref{eqn:Z_p_rp}), it is clear that $\E\big[(Z_{I_{2p}} - \E Z_{I_{2p}}) (Z_{J_{2p}} - \E Z_{J_{2p}})\big]$ will be non-zero only when each $x_{i_r}$ and $y_{i_r}$ appears at least twice, because $\E(x_i)= \E(y_i) = 0$. Therefore
\begin{equation} \label{eqn:B_p_2_rp}
\sum_{ A_{2p}, A_{2p}} \E \big[ (Z_{I_{2p}} - \E Z_{I_{2p}}) (Z_{J_{2p}} - \E Z_{J_{2p}}) \big] = \sum_{( I_{2p}, J_{2p}) \in B_{P_2}} \E \big[ (Z_{I_{2p}} - \E Z_{I_{2p}}) (Z_{J_{2p}} - \E Z_{J_{2p}}) \big],
\end{equation}
where $B_{P_2}$ as in Definition \ref{def:B_{P_l}_rp} for $\ell=2$. 
Since $x_i \distas{D} N(0, t-s)$, $y_i \distas{D} N(0, s)$ and $t, s \in [0,T]$,   there exists a common upper bound, say $\alpha$, for each summand in \eqref{eqn:B_p_2_rp}. That is, for any $( I_{2p}, J_{2p}) \in B_{P_2}$, 
$$\big|\E [(Z_{I_{2p}} - \E Z_{I_{2p}}) (Z_{J_{2p}} - \E Z_{J_{2p}}) ]\big| \leq \alpha.$$
Now using the above inequality in (\ref{eqn:B_p_2_rp}), we get
\begin{align} \label{eqn:case2, B_2_rp}
\Big|\sum_{ A_{2p}, A_{2p}} \E \big[ (Z_{I_{2p}} - \E Z_{I_{2p}}) (Z_{J_{2p}} - \E Z_{J_{2p}}) \big] \Big| \leq \sum_{( I_{2p}, J_{2p}) \in B_{P_2}} \alpha = |B_{P_2}| \alpha.
\end{align} 
Now we calculate cardinality of $B_{P_2}$. First recall $B_{P_2}$ from Definition \ref{def:B_{P_l}_rp}, 
\begin{align*}B_{P_2}=\{( I_{2p}, J_{2p}) \in  A_{2p} \times A_{2p} &: S_{I_{2p}} \cap S_{J_{2p}} \neq \emptyset \mbox{ and each entries of } S_{I_{2p}} \cup S_{J_{2p}}\\
&\quad \mbox{ has multiplicity greater than or equal to two}\}.
\end{align*}
It is easy to see that
\begin{align*}
 |B_{P_2}|  & = \left\{\begin{array}{ccc} 	 
		 	 O({n}^{2p-1}) & \text{if}&  |S_{I_{2p}} \cap S_{J_{2p}}| \mbox{ is even}, \\
			o({n}^{2p-1}) & \text{otherwise}.& 	 	 
		 	  \end{array}\right.
\end{align*}
Hence $|B_{P_2}|= O({n}^{2p-1})$ and therefore from (\ref{eqn:case2, B_2_rp}) we get
\begin{align} \label{eqn:Z_I,J_rp}
\Big|\sum_{ A_{2p}, A_{2p}} \E \big[ (Z_{I_{2p}} - \E Z_{I_{2p}}) (Z_{J_{2p}} - \E Z_{J_{2p}}) \big] \Big|\leq O(n^{2p-1}).
\end{align} 
By using (\ref{eqn:Z_I,J_rp}) 
in (\ref{eqn:case II_rp}), we get
\begin{align*}
|S_1|\leq (t-s)^2 \alpha^2 O(1).
\end{align*}
Since $t, s \in [0,T]$,  there exist $M_2 >0$, depending only on $p, T$ such that 
\begin{align} \label{case II,(t-s)_rp}
|S_1|\leq M_2 (t-s)^2 \ \mbox{ for all }n\geq 1.
\end{align} \\
\noindent \textbf{Case III.} \textbf{$I_{2p}, J_{2p}, K_{2p}, L_{2p}$ all are connected to each other, that is, $I_{2p}, J_{2p}, K_{2p}, L_{2p}$ forms a cluster:}  
In this situation, we get
\begin{align} \label{eqn:B_p_2,2_rp}
&\sum_{ A_{2p}, A_{2p}, A_{2p}, A_{2p}}  \E \Big[ (Z_{I_{2p}} - \E Z_{I_{2p}}) (Z_{J_{2p}} - \E Z_{J_{2p}}) (Z_{K_{2p}} - \E Z_{K_{2p}}) (Z_{L_{2p}} - \E Z_{L_{2p}}) \Big]  \\ 
& = \sum_{( I_{2p}, J_{2p}, K_{2p}, L_{2p}) \in B_{P_4}} \E \Big[ (Z_{I_{2p}} - \E Z_{I_{2p}}) (Z_{J_{2p}} - \E Z_{J_{2p}}) (Z_{K_{2p}} - \E Z_{K_{2p}}) (Z_{L_{2p}} - \E Z_{L_{2p}}) \Big],\nonumber
\end{align}
 where $B_{P_4}$ as in Definition \ref{def:B_{P_l}_rp}. By the similar arguments as given in Case II, there exist a common upper bound, say $\beta$,  such that for all $(I_{2p},J_{2p},K_{2p},L_{2p})\in B_{P_4}$
$$\big|\E [(Z_{I_{2p}} - \E Z_{I_{2p}}) (Z_{J_{2p}} - \E Z_{J_{2p}}) ] (Z_{K_{2p}} - \E Z_{K_{2p}}) (Z_{L_{2p}} - \E Z_{L_{2p}}) \big| \leq \beta. $$
Now using the above inequality in (\ref{eqn:B_p_2,2_rp}), we get 
\begin{align*}
&\sum_{ A_{2p}, A_{2p}, A_{2p}, A_{2p}} \Big| \E \big[ (Z_{I_{2p}} - \E Z_{I_{2p}}) (Z_{J_{2p}} - \E Z_{J_{2p}}) (Z_{K_{2p}} - \E Z_{K_{2p}}) (Z_{L_{2p}} - \E Z_{L_{2p}}) \big]\Big|\\
 &\qquad \leq \sum_{( I_{2p}, J_{2p}, K_{2p}, L_{2p}) \in B_{P_4}} \beta = |B_{P_4}| \beta.\nonumber 
\end{align*} 
Since from Result \ref{res:cluster_rp}, $|B_{P_4}| = o(n^{4p-2})$ and  therefore 
\begin{align*}
\frac{(t-s)^2}{n^{4p-2}}\sum_{A_{2p}, A_{2p}, A_{2p}, A_{2p}} \Big| \E \big[ (Z_{I_{2p}} - \E Z_{I_{2p}}) (Z_{J_{2p}} - \E Z_{J_{2p}})(Z_{K_{2p}} - \E Z_{K_{2p}}) (Z_{L_{2p}} - \E Z_{L_{2p}}) \big]\Big|\leq (t-s)^2 o(1).
\end{align*}
Hence there exist $M_2 >0$, depending only on $p$ and  $T$, such that for all $n$
\begin{equation} \label{case III,(t-s)_rp}
\frac{(t-s)^2}{n^{4p-2}}\sum_{A_{2p}, A_{2p}, A_{2p}, A_{2p}} \Big| \E \big[ (Z_{I_{2p}} - \E Z_{I_{2p}}) (Z_{J_{2p}} - \E Z_{J_{2p}})(Z_{K_{2p}} - \E Z_{K_{2p}}) (Z_{L_{2p}} - \E Z_{L_{2p}}) \big]\Big|\leq M_2(t-s)^2.
\end{equation}

Finally on combining \eqref{case II,(t-s)_rp} and \eqref{case III,(t-s)_rp}, it is clear that there exist a positive constant $M_T$, depending only on $p, T$ such that
\begin{align} \label{eqn:M_T_rp}
\E[w_{p}(t) - w_{p}(s)]^4 & \leq M_T (t-s)^2 \ \ \ \forall \ n \in \mathbb{N} \ \mbox{and } t,s \in[0,\ T].
\end{align}
This completes the proof of Proposition \ref{pro:tight_rp}.
\end{proof}
\begin{remark}
Note that the constant $M_T$ of (\ref{eqn:M_T_rp}) depends on $p$. So there is a possibility that it tends to infinity as $p \tends \infty$. Therefore, from Theorem \ref{thm:revprocess} we can not conclude anything about the process convergence of $\{ w_{p}(t) ; t \geq 0, p \geq 1\}$. But from the proof of Theorem \ref{thm:revprocess}, it follows that 
	as $n \tends \infty$
	\begin{equation*}
	\{ w_{p}(t) ; t \geq 0, 1 \leq p \leq N\} \stackrel{\mathcal D}{\rightarrow} \{N_{p}(t) ; t \geq 0 , 1 \leq p \leq N\}, 
	\end{equation*}  
	for any fixed $N \in \mathbb{N}$.
\end{remark}

\begin{remark} \label{rem:poly_test_function_rp}
For a given real polynomial  $Q(x)=\sum_{k=1}^da_kx^{2k}$
with degree $2d$, $d\geq 1$ and  even degree terms. If we define
$$w_Q(t) := \frac{1}{\sqrt{n}} \bigl\{ \Tr(Q(RC_n(t))) - \E[\Tr(Q(RC_n(t)))]\bigr\}.$$
Then we can extend our results, Theorem \ref{thm:revcovar_rp}, \ref{thm:revmulti_rp} and \ref{thm:revprocess} for $w_Q(t)$. The proof will go similar to the proof of Theorem \ref{thm:revcovar_rp}, \ref{thm:revmulti_rp} and \ref{thm:revprocess}.
\end{remark}

\section{Proof of Theorem \ref{thm:symcovar_sp}}\label{sec:cov_sp}
First we recall a convenient formula for trace of $(SC_n)^p$ from \cite{SC_independent_2020},
\begin{equation}\label{trace,formula_sp}
\Tr(SC_n)^p
   = \left\{\begin{array}{ccc} 	 
		 	\displaystyle n\sum_{k=0}^{p}\binom{p}{\ell}X_0^{p-k}\sum_{J_{k} \in A_{k}} X_{J_{k}}  & \text{if}& \mbox{ $n$ is odd}\\\\
			\displaystyle \frac{n}{2} \sum_{k=0}^{p}\binom{p}{k} \Big[ Y_k  \sum_{J_{k} \in A_k} X_{J_{k}} + \tilde{Y}_k \sum_{J_{k} \in \tilde{A}_k}  X_{J_{k}} \Big] & \text{if}& \mbox{ $n$ is even}, 	 	 
		 	  \end{array}\right.	
 \end{equation}

where  for each  $k=0, 1, 2,\ldots, p$, $A_{k}$ and ${ \tilde{A}_k}$ are defined as
\begin{align} \label{def:A_p_sp}
A_{k}&=\{(j_1,\ldots,j_{k})\in\mathbb N^k \suchthat \sum_{i=1}^{k}\epsilon_i j_i=0\; \mbox{(mod n)}, \epsilon_i\in\{+1,-1\}, 1\le j_1,\ldots,j_{k}\le \frac{n}{2}\}, \\
\tilde{A}_{k} &=\Big\{(j_1,\ldots,j_{k})\in\mathbb N^k\suchthat \sum_{i=1}^{k}\epsilon_i j_i=0\; \mbox{(mod } \frac{n}{2}) \mbox{ and } \sum_{i=1}^{k}\epsilon_i j_i \neq 0\; \mbox{(mod }n), \nonumber \\
& \qquad \ \  \epsilon_i\in\{+1,-1\}, 1\le j_1,\ldots,j_{k}< \frac{n}{2}\Big\}\nonumber,
\end{align}
 and 
\begin{align} \label{eqn:Y_k}
Y_k & = {(X_0 +  X_{\frac{n}{2}})}^{p-k} + {(X_0 -  X_{\frac{n}{2}})}^{p-k}, \ \tilde{Y}_k = {(X_0 +  X_{\frac{n}{2}})}^{p-k} - {(X_0 -  X_{\frac{n}{2}})}^{p-k}, \
X_{J_{k}}  = X_{j_1}X_{j_2}\ldots X_{j_{k}}. \nonumber 
\end{align}
Here note that $A_0$ and$\tilde{A}_0$ are empty set with the understanding that the contribution from the sum corresponding to $A_0$ and $\tilde{A}_0$ are $1$, and in $A_k$ and $\tilde{A}_k$, $(j_1,\ldots,j_{k})$ are collected according to their multiplicity. 

If $SC_n$ and $SC_n'$ are symmetric circulant matrices with input entries $\{ X_i\}$ and $\{X_i'\}$, respectively, then similar to (\ref{trace,formula_sp}), we can also derive

\begin{equation}\label{trace,product_sp}
\Tr [(SC_n)^p (SC'_n)^{q}] 
   = \left\{\begin{array}{ccc} 	 
		 	\displaystyle n\sum_{m=0}^p \sum_{r=0}^{q} \binom{p}{m} \binom{q}{r} (X_{0})^{p-m} (X_{0}')^{q-r} \sum_{ J_{m+r} \in A_{m+r}} X_{J_{m}} X'_{J_{r}}  & \text{if}& \mbox{ $n$ is odd}\\\\
			\displaystyle \frac{n}{2} \sum_{m=0}^p \sum_{r=0}^{q} \binom{p}{m} \binom{q}{r} \Big[ Q_k  \sum_{J_{m+r} \in A_{m+r}} X_{J_{m}}X'_{J_{r}} \\
			  \displaystyle +  \tilde{Q}_k \sum_{J_{m+r} \in \tilde{A}_{m+r}}  X_{J_{m}}X'_{J_{r}} \Big] & \text{if}& \mbox{ $n$ is even}, 	 	 
		 	  \end{array}\right.	
 \end{equation}
 where  for each  $m=0, 1, 2,\ldots, p$, $r=0, 1, 2,\ldots, q$,
$$J_{m+r}= (j_1, j_2, \ldots, j_m, j_{m+1}, j_{m+2}, \ldots, j_{m+r}),$$ 
   $A_{m+r}$, ${ \tilde{A}_{m+r}}$ are as defined in (\ref{def:A_p_sp}) and
\begin{align*} 
Q_k & = {(X_0 +  X_{\frac{n}{2}})}^{p-k} {(X'_0 +  X'_{\frac{n}{2}})}^{q-r}+ {(X_0 -  X_{\frac{n}{2}})}^{p-k} {(X'_0 -  X'_{\frac{n}{2}})}^{q-r}, \\
\tilde{Q}_k &= {(X_0 +  X_{\frac{n}{2}})}^{p-k}{(X'_0 +  X'_{\frac{n}{2}})}^{q-r} - {(X_0 -  X_{\frac{n}{2}})}^{p-k} {(X'_0 -  X'_{\frac{n}{2}})}^{q-r}, \\
X_{J_{m}}X'_{J_r} & = X_{j_1}X_{j_2}\ldots X_{j_{m}}X'_{j_{m+1}}X'_{j_{m+2}}\ldots X'_{j_{m+r}}. \nonumber 
\end{align*}

Here observe from the definition of $A_k$ and $\tilde{A}_k$, as in (\ref{def:A_p_sp}), that $|A_k|= O(n^{k-1})$, because the entries of $A_k$ has one constraint, whereas $|\tilde{A}_k|= O(n^{k-2})$, because the entries of $\tilde{A}_k$ has two constraints. Therefore
\begin{equation} \label{eqn:A,tildeA}
|\tilde{A}_k| < |A_k|.
\end{equation}
Now we state a result from \cite{adhikari_saha2017} which will be used in the proof of  Theorem \ref{thm:symcovar_sp}. 
For its proof see \cite[Lemma 15]{adhikari_saha2017}. 
\begin{result} \label{result:def_h}
	Suppose
	$$A_p^{(k)}=\Big\{(j_1,\ldots,j_p)\in\mathbb N^p\suchthat j_1+\cdots+j_k - j_{k+1}-\cdots -j_p=0 \mbox{ (mod $n$)},1\leq j_1,\ldots,j_p\leq \frac{n}{2}\Big\}.$$ 
	Then 
	$$
	h_p(k):=
\lim_{n\to \infty}\frac{|A_p^{(k)}|}{n^{p-1}}= \frac{1}{(p-1)!}\sum_{s=-\lceil\frac{p-k}{2}\rceil}^{\lfloor\frac{k}{2}\rfloor}\sum_{q=0}^{2s+p-k}(-1)^q\binom{p}{q}\l(\frac{2s+p-k-q}{2}\r)^{p-1}, 
$$
where $\lceil x\rceil$ denotes the smallest integer not less than $x$.	
\end{result}
Now for a given vector $(j_1, j_2, \ldots, j_p)\in \mathbb N^p$, we define {\it opposite sign pair matched} elements of the vector. 
\begin{definition}\label{def:opposite_sign}
Suppose $(j_1, j_2, \ldots, j_p) \in \mathbb N^p$ and $(\epsilon_1,\epsilon_2,\ldots,\epsilon_p)\in \{+1,-1\}^p$. We say $j_k, j_\ell$ are {\it opposite sign pair matched} given $(\epsilon_1,\epsilon_2,\ldots,\epsilon_p)$, if $\epsilon_k$ and $\epsilon_\ell$  are of opposite sign $j_k= j_\ell$ and $j_k$ appear exactly twice in $(j_1, j_2, \ldots, j_p).$

 For example, in $(2,3,5,2)$, $2$ is {\it opposite sign pair matched}, if $\epsilon_1=1$ and $\epsilon_4=-1$ or $\epsilon_1=-1$ and $\epsilon_4=1$ whereas if $\epsilon_1$ = $\epsilon_4= 1$ or $\epsilon_1$ = $\epsilon_4= -1$, then $2$ is not  {\it opposite sign pair matched}. 
 
We say that a vector $(j_1, j_2, \ldots, j_p)$ is {\it opposite sign pair matched} for a given $(\epsilon_1,\epsilon_2,\ldots,\epsilon_p)$, if all its entries are {\it opposite sign pair matched}.
\end{definition}
 Observe that if $(j_1, j_2, \ldots, j_p)\in A_{p}$ and each entry of $\{j_1, j_2, \ldots, j_p\}$ has multiplicity greater than or equal to two, then the maximum number of free variable in $(j_1, j_2, \ldots, j_p)$ will be $\frac{p}{2}$ only when $p$ is even and $(j_1, j_2, \ldots, j_p)$ is {\it opposite sign pair matched}. We shall use this observation in the proof of Theorem \ref{thm:symcovar_sp}.

 We first start with the following lemma which will be proved by using the trace formula of $SC_n$. 
\begin{lemma}\label{lem:w_p_sp}
Suppose $0<t_1 \leq t_2$ and $p, q\geq 2$.
Then
\begin{align}\label{eqn:cov_e,o_sp}
\Cov & \big(\eta_p(t_1),\eta_q(t_2)\big) \nonumber \\
   &= \left\{\begin{array}{ccc} 	 
		\displaystyle \frac{1}{n^{\frac{p+q}{2}-1}} \sum_{k, \ell =0}^{p, q}\binom{p}{k} \binom{q}{\ell} \sum_ {r=0}^{\ell} \binom{\ell}{r}  \sum_{A_{k}, A_{\ell}} \Big\{\E[u_0^{p-k} (u_0+v_0)^{q-\ell} U_{I_k}U_{J_r}V_{J_{r+1,\ell}} ]     \\
   -\E[ u_0^{p-k} U_{I_k}] \E[(u_0+v_0)^{q-\ell}U_{J_r} V_{J_{r+1,\ell}}  ]        \Big\}  & \text{if}& \mbox{ $n$ is odd}\\\\
			\displaystyle  \frac{1}{4n^{\frac{p+q}{2}-1}} \sum_{k, \ell =0}^{p, q}\binom{p}{k} \binom{q}{\ell}  \Big\{ \E \Big( \big[ Y_{k} \sum_{A_{k}}  U_{I_k} +\tilde{Y}_{k} \sum_{\tilde{A}_{k}} U_{I_{k}} \big]  \big[Z_{\ell} \sum_{A_{\ell}}  (U+V)_{J_\ell}   \\
   +\displaystyle \tilde{Z}_{\ell} \sum_{\tilde{A}_{\ell}} (U+V)_{J_{\ell}} \big] \Big) -\E \big[ Y_{k} \sum_{A_{k}}  U_{I_k} +\tilde{Y}_{k} \sum_{\tilde{A}_{k}} U_{I_{k}} \big] \\
 \displaystyle  \E \big[Z_{\ell} \sum_{A_{\ell}}  (U+V)_{J_\ell} +\tilde{Z}_{\ell} \sum_{\tilde{A}_{\ell}} (U+V)_{J_{\ell}} \big]  \Big\} & \text{if}& \mbox{ $n$ is even}, 	 	 
		 	  \end{array}\right.	
 \end{align}
where
$u_0=b_0(t_1), v_0=b_0(t_2)- b_0(t_1)$ and $U_{I_{k}},  (U+V)_{J_\ell},  Y_{k}, \tilde{Y}_{k}, Z_{\ell}, \tilde{Z}_{\ell}$ are given in (\ref{eqn:U,Y_sp}). 
\end{lemma}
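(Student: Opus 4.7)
Proof plan. The identity follows by substituting the trace formula \eqref{trace,formula_sp} into the definition of covariance. Write $U:=SC_n(t_1)$, with entries $u_i/\sqrt n$ where $u_i=b_i(t_1)$, and $V:=SC_n(t_2)-SC_n(t_1)$, with entries $v_i/\sqrt n$ where $v_i=b_i(t_2)-b_i(t_1)$. By the independent-increment property of Brownian motion, $\{u_i\}_{i\ge 0}$ and $\{v_i\}_{i\ge 0}$ are mutually independent (this independence will be exploited in the proof of Theorem \ref{thm:symcovar_sp}, not in the present lemma). Since $SC_n(t_2)=U+V$ is itself a symmetric circulant matrix,
\begin{equation*}
\Cov(\eta_p(t_1),\eta_q(t_2))=\frac{1}{n}\bigl\{\E[\Tr(U)^p\,\Tr(U+V)^q]-\E[\Tr(U)^p]\,\E[\Tr(U+V)^q]\bigr\}.
\end{equation*}

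The plan is to apply \eqref{trace,formula_sp} separately to $\Tr(U)^p$ and $\Tr(U+V)^q$, and treat odd and even $n$ in parallel. For odd $n$, each trace is expressed as a sum over $k$ (resp.\ $\ell$) carrying a prefactor $n$; multiplying and collecting the $1/\sqrt n$ factors coming from the entries produces the overall $n^{-(p+q)/2+1}$ in front. For each fixed $\ell$ I then expand $\prod_{i=1}^{\ell}(u_{j_i}+v_{j_i})$ by the binomial theorem and invoke the same binomial-counting argument that appears in the proof of Lemma \ref{lem:w_p_rp} (the sum over $A_\ell$ is symmetric in which of the $\ell$ positions carry $u$-factors versus $v$-factors, because one may permute the signs $\epsilon_i$ in the defining constraint of $A_\ell$ in \eqref{def:A_p_sp} in step with any permutation of the indices). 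This yields
\begin{equation*}
\sum_{J_\ell\in A_\ell}\prod_{i=1}^{\ell}(u_{j_i}+v_{j_i})=\sum_{r=0}^{\ell}\binom{\ell}{r}\sum_{J_\ell\in A_\ell}u_{j_1}\cdots u_{j_r}\,v_{j_{r+1}}\cdots v_{j_\ell}.
\end{equation*}
Substituting this expansion, keeping $(u_0+v_0)^{q-\ell}$ intact, and subtracting the matching product-of-expectations term gives the odd-$n$ branch of \eqref{eqn:cov_e,o_sp}.

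For even $n$ the argument is parallel, except that the trace formula now furnishes the two-piece brackets $Y_k\sum_{A_k}U_{I_k}+\tilde Y_k\sum_{\tilde A_k}U_{I_k}$ (for $\Tr(U)^p$) and $Z_\ell\sum_{A_\ell}(U+V)_{J_\ell}+\tilde Z_\ell\sum_{\tilde A_\ell}(U+V)_{J_\ell}$ (for $\Tr(U+V)^q$), while the prefactors combine as $(n/2)(n/2)/n=n/4$ and, together with $n^{-(p+q)/2}$ from the entries, yield the $1/(4n^{(p+q)/2-1})$ factor of the lemma. Multiplying out the two brackets, taking expectations, and subtracting the corresponding product of expectations reproduces the even-$n$ branch verbatim. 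The main obstacle here is purely combinatorial bookkeeping: tracking the four cross-terms $A_k\times A_\ell$, $A_k\times\tilde A_\ell$, $\tilde A_k\times A_\ell$, $\tilde A_k\times\tilde A_\ell$ alongside the corresponding $Y_k/\tilde Y_k$ and $Z_\ell/\tilde Z_\ell$ coefficients; no new probabilistic or combinatorial input beyond \eqref{trace,formula_sp} is required.
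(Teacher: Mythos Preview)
Your proposal is correct and follows essentially the same route as the paper's proof: define $U=SC_n(t_1)$ and $V=SC_n(t_2)-SC_n(t_1)$, write the covariance via $\Tr(U)^p\Tr(U+V)^q$, apply the trace formula \eqref{trace,formula_sp} separately for odd and even $n$, and for odd $n$ expand $\prod(u_{j_i}+v_{j_i})$ binomially using the permutation symmetry of $A_\ell$. Your explicit remark on why the binomial collapse over $A_\ell$ is legitimate (permuting the $\epsilon_i$ along with the indices) is a bit more careful than the paper, which simply writes the resulting sum, but otherwise the arguments coincide.
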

\begin{proof}
Define
\begin{align*}
 U&:=SC_n(t_1)=  \frac{1}{\sqrt{n}} \Big(u_{j-i \;{(\mbox{mod $n$})}}\Big)_{i,j=1} ^ {n} \mbox{ and }\\
 V&:=SC_n(t_2)-SC_n(t_1) =  \frac{1}{\sqrt{n}} \Big(v_{j-i \;{(\mbox{mod $n$})}}\Big)_{i,j=1} ^ {n}.
 \end{align*}
As $\E(\eta_p(t_1))=\E(\eta_q(t_2))=0$, we have 
\begin{align} \label{eqn:cov_trace_sp}
 \Cov\big(\eta_p(t_1),\eta_q(t_2)\big)&=\E[\eta_p(t_1) \eta_q(t_2)] \nonumber \\
 &= \frac{1}{n} \Big\{ \E[\Tr(SC_n(t_1))^p\Tr(SC_n(t_2))^q ]- \E[\Tr(SC_n(t_1))^p]\E[\Tr(SC_n(t_2))^q]   \Big\}  \nonumber \\
 &= \frac{1}{n} \Big\{ \E[\Tr(U)^p\Tr(U+V)^q ]- \E[\Tr(U)^p]\E[\Tr(U+V)^q]   \Big\}. 
 \end{align}
 Using the trace formula (\ref{trace,formula_sp}) for odd $n$, we get
 \begin{align*}
	\E[\Tr(U)^{p}]&= \E\Big[n\sum_{k=0}^{p}\binom{p}{k} (\frac{u_{0}}{ \sqrt{n}}) ^{p-k}\sum_{A_{k}}\frac{u_{i_1}}{ \sqrt{n}} \cdots \frac{u_{i_{k}}}{ \sqrt{n}} \Big]
	= \frac{1}{ n^ {\frac{p}{2}-1}} \E\Big[ \sum_{k=0}^{p}\binom{p}{k}u_0^{p-k}\sum_{A_{k}} u_{i_1} \cdots u_{i_{k}}\Big],\\
	 \E[\Tr(U+V)^q]&=  \frac{1}{ n^ {\frac{q}{2}-1}}\E\Big[ \sum_{\ell=0}^{q}\binom{q}{\ell} (u_0+v_0)^{q-\ell} \sum_{A_{\ell}} (u_{j_1} + v_{j_1}) \cdots  (u_{j_\ell} + v_{j_\ell})\Big].
	\end{align*}
	Therefore for odd value of $n$, (\ref{eqn:cov_trace_sp}) will be
\begin{align} \label{eqn:cov_o_u_i}
& \Cov\big(\eta_p(t_1),\eta_q(t_2)\big) \nonumber \\
&=  \frac{1}{ n^ {\frac{p+q}{2}-1}} \Bigg[ \E\Big\{ \Big( \sum_{k=0}^{p}\binom{p}{k}u_0^{p-k}\sum_{A_{k}} u_{i_1} \cdots u_{i_{k}} \Big) \Big(  \sum_{\ell=0}^{q}\binom{q}{\ell} (u_0+v_0)^{q-\ell} \sum_{A_{\ell}} (u_{j_1} + v_{j_1}) \cdots  (u_{j_\ell} + v_{j_\ell}) \Big) \Big\} \nonumber \\
 &\qquad  - \E \Big( \sum_{k=0}^{p}\binom{p}{k}u_0^{p-k}\sum_{A_{k}} u_{i_1} \cdots u_{i_{k}} \Big) \Big(\E \sum_{\ell=0}^{q}\binom{q}{\ell} (u_0+v_0)^{q-\ell} \sum_{A_{\ell}} (u_{j_1} + v_{j_1}) \cdots  (u_{j_\ell} + v_{j_\ell}) \Big)     \Bigg] \nonumber \\
   &=\frac{1}{n^{\frac{p+q}{2}-1}}  \sum_{k, \ell =0}^{p, q}\binom{p}{k} \binom{q}{\ell} \sum_ {r=0}^{\ell} \binom{\ell}{r} \sum_{A_{k}, A_{\ell}} \Big\{\E[ u_0^{p-k} (u_0+v_0)^{q-\ell} u_{i_1} \cdots u_{i_k} u_{j_1} \cdots u_{j_r}v_{j_{r+1}} \cdots v_{j_\ell} ] \\
  &\qquad -\E[(u_0)^{p-k} u_{i_1} \cdots u_{i_\ell} ] \E[ (u_0+v_0)^{q-\ell} u_{j_1}\cdots u_{j_r}v_{j_{r+1}} \cdots v_{j_\ell}  ] \Big\} \nonumber.
\end{align}
Now for $0 \leq k \leq p, 0 \leq \ell \leq q$ and $0 \leq r \leq \ell$, we define
 \begin{align*}
 I_k&= (i_1, i_2, \ldots, i_k),\quad J_\ell=(j_1,j_2, \ldots, j_\ell),	\\
 J_r&= (j_1, j_2, \ldots, j_r),\quad J_{r+1,\ell}= (j_{r+1}, j_{r+2}, \ldots, j_\ell),\\
U_{I_k} &= u_{i_1}\cdots u_{i_k},\quad  U_{J_r} = u_{j_1}\cdots u_{j_r} \ \mbox{ and } \ V_{J_{r+1,\ell}} = v_{j_{r+1}}\cdots v_{j_\ell}.
\end{align*} 	
Then \eqref{eqn:cov_o_u_i} can be written as
  \begin{align*}
  \Cov\big(\eta_p(t_1),\eta_q(t_2)\big) &= \frac{1}{n^{\frac{p+q}{2}-1}} \sum_{k, \ell =0}^{p, q}\binom{p}{k} \binom{q}{\ell} \sum_ {r=0}^{\ell} \binom{\ell}{r}  \sum_{A_{k}, A_{\ell}} \Big\{\E[u_0^{p-k} (u_0+v_0)^{q-\ell} U_{I_k}U_{J_r}V_{J_{r+1,\ell}} ]     \\
  &\qquad -\E[ u_0^{p-k} U_{I_k}] \E[(u_0+v_0)^{q-\ell}U_{J_r} V_{J_{r+1,\ell}}  ]        \Big\}, 
  \end{align*}
  where $U_{J_0}=1$ and $V_{J_{q+1,q}}=1$.
  
Now by the similar calculation, for even value of $n$, (\ref{eqn:cov_trace_sp}) will be
 \begin{align*}
 & \Cov\big(\eta_p(t_1),\eta_q(t_2)\big) \\
  &= \frac{1}{4n^{\frac{p+q}{2}-1}} \sum_{k, \ell =0}^{p, q}\binom{p}{k} \binom{q}{\ell}  \Big\{ \E \Big( \big[Y_{k} \sum_{A_{k}}  U_{I_k} +\tilde{Y}_{k} \sum_{\tilde{A}_{k}} U_{I_{k}} \big]  \big[Z_{\ell} \sum_{A_{\ell}}  (U+V)_{J_\ell} +\tilde{Z}_{\ell} \sum_{\tilde{A}_{\ell}} (U+V)_{J_{\ell}} \big] \Big) \nonumber  \\
  &\qquad -\E\big[ Y_{k} \sum_{A_{k}}  U_{I_k} +\tilde{Y}_{k} \sum_{\tilde{A}_{k}} U_{I_{k}} \big] \E \big[Z_{\ell} \sum_{A_{\ell}}  (U+V)_{J_\ell} +\tilde{Z}_{\ell} \sum_{\tilde{A}_{\ell}} (U+V)_{J_{\ell}} \big]  \Big\}, \nonumber
  \end{align*}
  where $\ A_k, \tilde{A}_k$ are as defined in (\ref{def:A_p_sp}) and 
\begin{align} \label{eqn:U,Y_sp}
U_{I_k} &= u_{i_1}\cdots u_{i_k}, \ (U+V)_{J_\ell} = (u_{j_1} + v_{j_1}) \cdots  (u_{j_\ell} + v_{j_\ell}), \\
Y_k & = {(u_0 +  u_{\frac{n}{2}})}^{p-k} + {(u_0 -  u_{\frac{n}{2}})}^{p-k}, \nonumber \ \ \tilde{Y}_k = {(u_0 +  u_{\frac{n}{2}})}^{p-k} - {(u_0 -  u_{\frac{n}{2}})}^{p-k}, \\
Z_\ell & = \{(u_0+v_0) +  (u_{\frac{n}{2}} +v_{\frac{n}{2}})\}^{q-\ell} + \{(u_0+v_0) - (u_{\frac{n}{2}} +v_{\frac{n}{2}})\}^{q-\ell}, \nonumber \\
\tilde{Z}_\ell & = \{(u_0+v_0) +  (u_{\frac{n}{2}} +v_{\frac{n}{2}})\}^{q-\ell} - \{(u_0+v_0) - (u_{\frac{n}{2}} +v_{\frac{n}{2}})\}^{q-\ell}. \nonumber
\end{align}  
This completes the proof.\end{proof}
Now we are ready to prove Theorem \ref{thm:symcovar_sp}. 
\begin{proof}[Proof of Theorem \ref{thm:symcovar_sp}]
First note that, for odd and even values of $n$ we have different trace formulae. 
In Steps $1$ and $2$, we calculate limit of $\Cov\big(\eta_p(t_1),\eta_q(t_2)\big)$ for odd and even $n$, respectively and verify that they are same.
 \\
	
\noindent \textbf{Step 1.} First we recall  from (\ref{eqn:cov_e,o_sp}) that for odd values of $n$,
	\begin{align*} 
 Cov\big(\eta_p(t_1),\eta_q(t_2)\big) &=\frac{1}{n^{\frac{p+q}{2}-1}} \sum_{k, \ell =0}^{p, q}\binom{p}{k} \binom{q}{\ell} \sum_ {r=0}^{\ell} \binom{\ell}{r}  \sum_{A_{k}, A_{\ell}} \Big\{\E[u_0^{p-k} (u_0+v_0)^{q-\ell} U_{I_k}U_{J_r}V_{J_{r+1,\ell}} ]     \\
  &\qquad -\E[ u_0^{p-k} U_{I_k}] \E[(u_0+v_0)^{q-\ell}U_{J_r} V_{J_{r+1,\ell}}  ]        \Big\}  \\
  & = T_1 +T_2, \mbox{ say},
\end{align*}
where 
\begin{align}\label{eqn:T_2_sp}
T_2 & =\frac{1}{n^{\frac{p+q}{2}-1}} \sum_{k, \ell =0}^{p-1, q-1}\binom{p}{k} \binom{q}{\ell} \sum_ {r=0}^{\ell} \binom{\ell}{r}  \sum_{A_{k}, A_{\ell}} \Big\{\E[u_0^{p-k} (u_0+v_0)^{q-\ell} U_{I_k}U_{J_r}V_{J_{r+1,\ell}} ]     \\
  &\qquad -\E[ u_0^{p-k} U_{I_k}] \E[(u_0+v_0)^{q-\ell}U_{J_r} V_{J_{r+1,\ell}}  ]        \Big\} \nonumber
\end{align}
and $T_1=\Cov\big(\eta_p(t_1),\eta_q(t_2)\big)-T_2$. 
Now we calculate limits of $T_1$ and $T_2$.	
	
Note that in $T_1$, either $k=p, \ell \leq q$ or $\ell=q, k \leq p$. Therefore
\begin{align} \label{eqn:S_1+S_2_sp}
T_1&=\Cov\big(\eta_p(t_1),\eta_q(t_2)\big)-T_2 \nonumber \\
&= \frac{1}{n^{\frac{p+q}{2}-1}} \biggl[ \sum_{\ell =0}^{q} \binom{q}{\ell} \sum_ {r=0}^{\ell} \binom{\ell}{r}  \sum_{A_{p}, A_{\ell}} \Big\{\E[(u_0+v_0)^{q-\ell}] \Big( \E[ U_{I_p}U_{J_r}V_{J_{r+1,\ell}} ] -\E[ U_{I_p}] \E[U_{J_r} V_{J_{r+1,\ell}}  ] \Big)  \Big\} \nonumber \\
& \qquad + \sum_{k=0}^{p-1}\binom{p}{k} \sum_ {r=0}^{q} \binom{q}{r}  \sum_{A_{k}, A_{q}} \Big\{\E[u_0^{p-k}] \Big( \E[ U_{I_k}U_{J_r}V_{J_{r+1,q}} ] -\E[ U_{I_k}] \E[U_{J_r} V_{J_{r+1,q}}  ] \Big)  \Big\}  \biggl] \nonumber \\
& = S_1 +S_2, \mbox{ say}.
\end{align}
Now first we calculate $S_1$ of (\ref{eqn:S_1+S_2_sp}).
Note that if $\{i_1,i_2,\ldots,i_{p}\}\cap \{j_1,j_2,\ldots, j_{r}\}=\emptyset$ or $\ell=0$ or $r=0$ then from independence of $u_i$ and $v_i$, we get
	$$\E[(u_0+v_0)^{q-\ell}] \Big( \E[ U_{I_p}U_{J_r}V_{J_{r+1,\ell}} ] -\E[ U_{I_p}] \E[U_{J_r} V_{J_{r+1,\ell}}  ] \Big)=0.$$
Hence we can get non-zero contribution in $S_1$ only when there is at least one cross-match among $\{i_1,\ldots,i_{p}\}$ and $\{j_1,\ldots,j_{r}\}$, i.e., $\{i_1,i_2,\ldots,i_{p}\}\cap \{j_1,j_2,\ldots,j_{r}\}\neq\emptyset$ for some $r=1, 2, \ldots, \ell$. 
	So from the above observation, limit of $S_1$ as ${n\to\infty}$ can be written as
	\begin{align} \label{eqn:T_p,l_sp}
	\lim_{n\to\infty}  S_1  &=\lim_{n\to\infty}  \frac{1}{n^{\frac{p+q}{2}-1}} \sum_{ \ell =1}^{q} \binom{q}{\ell} \sum_ {r=1}^{\ell} \binom{\ell}{r} \sum_{m=1}^{ \min \{p,r\} } \sum_{C_m}  \Big\{\E[(u_0+v_0)^{q-\ell}] \Big( \E[ U_{I_p}U_{J_r}V_{J_{r+1,\ell}} ] \nonumber\\
	& \qquad -\E[ U_{I_p}] \E[U_{J_r} V_{J_{r+1,\ell}}  ] \Big)  \Big\}  \nonumber \\	
	& =\lim_{n\to\infty}  \frac{1}{n^{\frac{p+q}{2}-1}} \sum_{\ell =1}^{q} \binom{q}{\ell} \sum_ {r=1}^{\ell} \binom{\ell}{r} \sum_{m=1}^{ \min \{p,r\} } T^m_{p, \ell}, \mbox{ say},
\end{align}		 
	where for each $m=1,2, \ldots, \min\{p, r\}$, $C_m$ is defined as
	\begin{equation*}
	C_{m}:=\{((i_1,\ldots,i_{p}),(j_1,\ldots,j_{r}, j_{r+1}, \ldots, j_\ell))\in A_{p}\times A_{\ell}\suchthat |\{i_1,\ldots,i_{p}\}\cap \{j_1,\ldots,j_{r}\}|=m\}.
	\end{equation*}	
	Now we calculate the contribution due to a typical term $T^m_{p, \ell}$ of (\ref{eqn:T_p,l_sp}) for some fixed value of $r= 1, 2, \ldots, \ell$ and $m =1,2, \ldots, \min\{p, r\}$.  
	Since entries are standard Brownian motion, there exist $\gamma >0$, which depends only on $p$ and $r$, such that 
	\begin{align} \label{eqn:gamma_1}
	|T^m_{p, \ell}| & \leq \sum_{C_m} \big| \E[(u_0+v_0)^{q-\ell}] \big( \E[ U_{I_p}U_{J_r}V_{J_{r+1,\ell}} ] -\E[ U_{I_p}] \E[U_{J_r} V_{J_{r+1,\ell}}  ] \big) \big| \nonumber \\   
	& \leq \gamma  |B_{p,\ell}^m|,
	\end{align}
	where  $B_{p,\ell}^m \subseteq A_p \times A_\ell$ with the conditions that  $|\{ i_1, i_2,\ldots,i_p \} \cap \{ j_1,\ldots, j_{r} \}| =m$ and each element of set $\{ i_1, i_2,\ldots,i_{p}\} \cup \{ j_1, j_2, \ldots, j_r, j_{r+1}, \ldots, j_\ell\} $ has multiplicity greater than or equal to two. So, to find $|T^m_{p,\ell}|$, it is enough to calculate the cardinality of $B_{p,\ell}^m$. 
 Now by arguments similar to those in \cite{SC_independent_2020}, estimate of the cardinality of $B_{p,\ell}^m$ is 
	\begin{align}\label{card_B_k_l}
	 |B_{p,\ell}^m|  & = 
	  \left\{\begin{array}{ll} 	 
		 	 O(n^{\frac{p+\ell}{2}-1}) & \text{if}\ (p-m), \ (r-m) \mbox{ and } (\ell-r) \mbox{ are even,}\\\\
			o(n^{\frac{p+\ell}{2}-1}) & \text{otherwise}. 	 	 
		 	  \end{array}\right.		 
	 \end{align}
Therefore from (\ref{eqn:gamma_1}) and (\ref{card_B_k_l}), we get
	\begin{align} \label{card_T_p,l_sp}
	|T^m_{p,\ell}|  & = \left\{\begin{array}{ll} 	 
		 	 O(n^{\frac{p+\ell}{2}-1}) & \text{if, either}\ p, r, m \mbox{ and } \ell \mbox{ all are even or all are odd},\\\\
			o(n^{\frac{p+\ell}{2}-1}) & \text{otherwise}. 	 	 
		 	  \end{array}\right.	
	\end{align}
Now from (\ref{card_T_p,l_sp}), we get that $T^m_{p,\ell}$ has non-zero contribution in (\ref{eqn:T_p,l_sp}) only when $\ell=q$. Moreover $T^m_{p,q}$ has non-zero contribution only when either $p,q, m$ and $r$ are all even or they are all odd. So, using (\ref{card_T_p,l_sp}) in (\ref{eqn:T_p,l_sp}), we get 
\begin{equation} \label{eqn:S_1_sp}
\lim_{n\to\infty} S_1 = \lim_{n\to\infty} \frac{1}{n^{\frac{p+q}{2}-1}} \sum_ {r=1}^{q} \binom{q}{r} \sum_{m=1}^{ \min \{p,r\} } \sum_{C_m} \Big\{\E[U_{I_p}U_{J_r}V_{J_{r+1,q}} ] -\E[ U_{I_p}] \E[U_{J_r} V_{J_{r+1,q}}  ]\Big\}.
\end{equation}	
Now we calculate the second term $S_2$ of (\ref{eqn:S_1+S_2_sp}) in a similar way. 
So we outline only the main steps. First recall $S_2$ from (\ref{eqn:S_1+S_2_sp}),
	$$
S_2 = \frac{1}{n^{\frac{p+q}{2}-1}} \sum_{k=0}^{p-1}\binom{p}{k} \sum_ {r=0}^{q} \binom{q}{r}  \sum_{A_{k}, A_{q}} \Big\{\E[u_0^{p-k}] \Big( \E[ U_{I_k}U_{J_r}V_{J_{r+1,q}} ] -\E[ U_{I_k}] \E[U_{J_r} V_{J_{r+1,q}}  ] \Big)  \Big\}.
$$
	Here again note that, if $\{i_1,i_2,\ldots,i_{k}\}\cap \{j_1,j_2,\ldots,j_{r}\} = \emptyset$ or $k=0$ or $r=0$ then contribution due to that typical term in $S_2$ is zero. So
	 \begin{align} \label{eqn:S_2,2_sp}
\lim_{n\to\infty} S_2 &= \lim_{n\to\infty} \frac{1}{n^{\frac{p+q}{2}-1}} \sum_{k=1}^{p-1}\binom{p}{k} \sum_ {r=1}^{q} \binom{q}{r}  \sum_{m=1}^{ \min \{k,r\} } \sum_{C'_m}  \Big\{\E[u_0^{p-k}] \Big( \E[ U_{I_k}U_{J_r}V_{J_{r+1,q}} ] \nonumber \\
& \qquad -\E[ U_{I_k}] \E[U_{J_r} V_{J_{r+1,q}}  ] \Big)  \Big\}\nonumber \\
&= \lim_{n\to\infty} \frac{1}{n^{\frac{p+q}{2}-1}} \sum_{k=1}^{p-1}\binom{p}{k} \sum_ {r=1}^{q} \binom{q}{r} \sum_{m=1}^{ \min \{k,r\} } \sum_{C'_m}  T^m_{k,q}, \mbox{ say},
\end{align}
where for each $m=1,2, \ldots, \min\{k,r\}$, $C'_m$ is defined as
	\begin{equation*}
	C'_{m}:=\{((i_1,\ldots,i_{k}),(j_1,\ldots, j_r, j_{r+1}, \ldots, j_{q}))\in A_{k}\times A_{q}\suchthat |\{i_1,\ldots,i_{k}\}\cap \{j_1,\ldots,j_{r}\}|=m\}.
	\end{equation*}
Now following the argument that was used to establish (\ref{card_T_p,l_sp}), we get	
	\begin{align} \label{card_T_k,q_sp}
	|T^m_{k,q}|  & = \left\{\begin{array}{ll} 	 
		 	 O(n^{\frac{k+q}{2}-1}) & \text{if, either}\ k, r, m \mbox{ and } q \mbox{ all are even or all are odd},\\\\
			o(n^{\frac{k+q}{2}-1}) & \text{otherwise}. 	 	 
		 	  \end{array}\right.	
	\end{align}
	On using (\ref{card_T_k,q_sp}) in (\ref{eqn:S_2,2_sp}), we get
	$\displaystyle{\lim_{n\to\infty} S_2  =0}$.
	Finally  using this and (\ref{eqn:S_1_sp})  in (\ref{eqn:S_1+S_2_sp}), we get
	\begin{align} \label{eqn:lim_cov_o,k=p,1_sp}
	\lim_{n\to\infty}  T_1
	&= \displaystyle\lim_{n\to\infty} \frac{1}{n^{\frac{p+q}{2}-1}} \sum_ {r=1}^{q} \binom{q}{r} \sum_{m=1}^{ \min \{p,r\} } \sum_{C_m} \Big\{\E[U_{I_p}U_{J_r}V_{J_{r+1,q}} ] -\E[ U_{I_p}] \E[U_{J_r} V_{J_{r+1,q}}  ]\Big\}		\nonumber \\
	&=  \left\{\begin{array}{ll} 	 
	        \displaystyle\lim_{n\to\infty} \frac{1}{n^{\frac{p+q}{2}-1}} \sum_ {r=2,4,}^{q} \binom{q}{r}  \sum_{m=1}^{ \min\{ \frac{p}{2},\frac{r}{2} \} } \sum_{C_{2m}} \Big\{\E[U_{I_p}U_{J_r}V_{J_{r+1,q}} ] \\
	\quad -\E[ U_{I_p}] \E[U_{J_r} V_{J_{r+1,q}}  ]\Big\}  & \text{if}\ p,  q \mbox{ both are even,}\\\\
		 	 \displaystyle\lim_{n\to\infty}\frac{1}{n^{\frac{p+q}{2}-1}} \sum_ {r=1,3,}^{q} \binom{q}{r}  \sum_{m=0}^{ \min\{ \frac{p-1}{2},\frac{r-1}{2} \} } \sum_{C_{2m+1}} \Big\{\E[U_{I_p}U_{J_r}V_{J_{r+1,q}} ] \\
	\quad -\E[ U_{I_p}] \E[U_{J_r} V_{J_{r+1,q}}  ]\Big\}& \text{if}\ p,  q \mbox{ both are odd,}\\\\
			0 &  \text{otherwise.} \\\
			\end{array}\right. 	 		 	  	 	  
	\end{align}
Now by the arguments similar to those used in the proof of Theorem $1$ of \cite{SC_independent_2020}, we get
 \begin{align} \label{eqn:lim_cov_o,k=p_sp}
	\lim_{n\to\infty}  T_1
	&=  \left\{\begin{array}{ll} 	 
		 	\displaystyle \sum_ {r=2,4,}^{q} \binom{q}{r}  t_1^{ \frac{p+r}{2}} (t_2-t_1)^{ \frac{q-r}{2}} \Big\{  \frac{2a_1} {2^{\frac{p+q-4}{2}}  } +\sum_{m=2}^{ \min\{ \frac{p}{2},\frac{r}{2} \} } \Big ( \frac{a_m}{2^{\frac{p+r-4m}{2}}  } \\
		 	\qquad \displaystyle \sum_{s=0}^{2m}\binom{2m}{s}^2 s!(2m-s)! \ h_{2m}(s) \Big) \Big\} & \text{if}\ p, \mbox{} q \mbox{ both are even},\\\\
		 	\displaystyle \sum_ {r=1,3,}^{q} \binom{q}{r}  t_1^{ \frac{p+r}{2}} (t_2-t_1)^{ \frac{q-r}{2}}  \sum_{m=0}^{ \min\{ \frac{p-1}{2},\frac{r-1}{2} \} } \Big( \frac{b_m}{2^{\frac{p+r-4m-2}{2}}  } \\
		 \qquad	\displaystyle \sum_{s=0}^{2m+1}\binom{2m+1}{s}^2 s!(2m+1-s)! \ h_{2m+1}(s) \Big) & \text{if}\ p, \mbox{} q \mbox{ both are odd} \\	\\	 	
			0 & \text{otherwise}, 	 
		 	  \end{array}\right.			 	  
	\end{align}
	where $h_{2m+1}(s)$ is defined in Result \ref{result:def_h} and
	\begin{align*} 
	 a_m &=	\binom{p}{p-2m}\binom{p-2m}{\frac{p-2m}{2}} (\frac{p-2m}{2})! \binom{r}{r-2m}\binom{r-2m}{\frac{r-2m}{2}} (\frac{r-2m}{2})! \\
	  &\qquad  \binom{q}{q-2r}\binom{q-2r}{\frac{q-2r}{2}} (\frac{q-2r}{2})!, \nonumber \\
	 b_m &=	\binom{p}{p-2m-1}\binom{p-2m-1}{\frac{p-2m-1}{2}} (\frac{p-2m-1}{2})!\binom{r}{r-2m-1} \nonumber\\ 
	&\qquad  \binom{r-2m-1} {\frac{r-2m-1}{2}} (\frac{r-2m-1}{2})! \binom{q}{q-2r-1}\binom{q-2r-1}{\frac{q-2r-1}{2}} (\frac{q-2r-1}{2})!. \nonumber
	\end{align*}
	For more details about $h_{2m+1}(s), a_m, b_m$ and calculation of (\ref{eqn:lim_cov_o,k=p,1_sp}), we refer the reader to the proof of Theorem $1$ (Case I) of \cite{SC_independent_2020}.

Now we calculate the limit of $T_2$ as $n\to\infty$. 
	Similar to the term $T_1$, we get maximum contribution in $T_2$ when $\{i_1,i_2,\ldots,i_{k}\}\cap \{j_1,j_2,\ldots,j_{r}\} \cap \{j_{r+1}, j_{r+2},\ldots,j_{\ell}\}=\emptyset$.
	 Since entries are standard Brownian motion,
	\begin{align} \label{eqn:bound_expect}
	& \sum_{A_{k}, A_{\ell}}  \big| \Big\{\E[u_0^{p-k} (u_0+v_0)^{q-\ell} U_{I_k}U_{J_r}V_{J_{r+1,\ell}} ]  -\E[ u_0^{p-k} U_{I_k}] \E[(u_0+v_0)^{q-\ell}U_{J_r} V_{J_{r+1,\ell}}  ]  \Big\} \big|\\
	& \qquad \leq  O(n^{\lfloor\frac{k}{2}\rfloor+ \lfloor\frac{r}{2}\rfloor +\lfloor\frac{\ell-r}{2} \rfloor}). \nonumber
	\end{align}
	 Now using (\ref{eqn:bound_expect}) and the fact that $\E(u_i)= \E(v_i)=0$ for each $i=1,2, \ldots$, we get
	 \begin{align} \label{eqn:lim_cov_o,k<p_sp}
	& \lim_{n\to\infty}  T_2 \nonumber \\ 
	  &= \left\{\begin{array}{ll} 	 \nonumber
		 	\displaystyle\lim_{n\to\infty} \frac{pq}{n^{\frac{p+q}{2}-1}} \sum_ {r=0,2,}^{q-1} \binom{q-1}{r}  \sum_{A_{p-1}, A_{q-1}} \E[ u_0(u_0 +v_0)] \E[U_{I_{p-1}}U_{J_r}V_{J_{r+1,q-1}} ] & \text{if}\ p, \mbox{} q \mbox{ both are odd},\\\\
			0 & \text{otherwise}, 	 	 
		 	  \end{array}\right.	\\ 
		 &= \left\{\begin{array}{ll} 
		 	 \displaystyle \frac{pq}{2^{(\frac{p+q}{2}-1)}} \sum_ {r=0,2,}^{q-1} \binom{q-1}{r}  t_1^{ \frac{p+1+r}{2}} (t_2-t_1)^{ \frac{q-1-r}{2}} 
		 	 d_r  & \text{if}\ p,  q \mbox{ both are odd},\\\\
		 	 0 & \text{otherwise}, 		 
		 	  \end{array}\right. 
	 \end{align}
	 where $$d_r = \binom{p-1}{(p-1)/2}\binom{r}{r/2} \binom{q-r-1}{(q-r-1)/2} \l((p-1)/2\r)! \l(r/2\r)! \l((q-r-1)/2\r)!.$$
	
Now on combining both the cases, from (\ref{eqn:lim_cov_o,k=p_sp}) and (\ref{eqn:lim_cov_o,k<p_sp}), we get
\begin{align} \label{eqn:lim_cov_o_sp}
	\lim_{ \substack{{n\to\infty} \\ {n \mbox{ odd} } }} & \Cov\big(\eta_p(t_1),\eta_q(t_2)\big) \nonumber \\ 
	& =  \left\{\begin{array}{ll} 	 
		 	\displaystyle \sum_ {r=2,4,}^{q} \binom{q}{r}  t_1^{ \frac{p+r}{2}} (t_2-t_1)^{ \frac{q-r}{2}} \Big\{  \frac{2a_1} {2^{\frac{p+q-4}{2}}  } + \\
		 	\qquad \displaystyle + \sum_{m=2}^{ \min\{ \frac{p}{2},\frac{r}{2} \} } \Big( \frac{a_m}{2^{\frac{p+r-4m}{2}}  } \sum_{s=0}^{2m}\binom{2m}{s}^2 s!(2m-s)! \ h_{2m}(s) \Big) \Big\} & \text{if}\ p, \mbox{} q \mbox{ both are even},\\\\
		 	\displaystyle \sum_ {r=1,3,}^{q} \binom{q}{r}  t_1^{ \frac{p+r}{2}} (t_2-t_1)^{ \frac{q-r}{2}}  \sum_{m=0}^{ \min\{ \frac{p-1}{2},\frac{r-1}{2} \} }  \frac{b_m}{2^{\frac{p+r-4m-2}{2}}  } \\
		 \qquad	\displaystyle \sum_{s=0}^{2m+1} \Big( \binom{2m+1}{s}^2 s!(2m+1-s)! \ h_{2m+1}(s) \Big)  \\
		  \displaystyle \qquad +  \frac{pq}{2^{(\frac{p+q}{2}-1)}} \sum_ {r=0,2,}^{q-1} \binom{q-1}{r}  t_1^{ \frac{p+1+r}{2}} (t_2-t_1)^{ \frac{q-1-r}{2}} d_{r}
		 & \text{if}\ p, \mbox{} q \mbox{ both are odd,} \\\\	 	
			0 & \text{otherwise}.	 
		 	  \end{array}\right.			 	  
	\end{align}

\noindent \textbf{Step 2.} For $n$ even, recall $\Cov\big(\eta_p(t_1),\eta_q(t_2)\big)$ from (\ref{eqn:cov_e,o_sp})
 \begin{align}\label{eqn:cov_e_sp}
 & \lim_{n\to\infty}  \Cov\big(\eta_p(t_1),\eta_q(t_2)\big) \\
 &= \lim_{n\to\infty}  \frac{1}{4n^{\frac{p+q}{2}-1}} \sum_{k, \ell =0}^{p, q}\binom{p}{k} \binom{q}{\ell}  \Big\{ \E \Big( [Y_{k} \sum_{A_{k}}  U_{I_k} +\tilde{Y}_{k} \sum_{\tilde{A}_{k}} U_{I_{k}} ]  [Z_{\ell} \sum_{A_{\ell}}  (U+V)_{J_\ell} +\tilde{Z}_{\ell} \sum_{\tilde{A}_{\ell}} (U+V)_{J_{\ell}} ] \Big)   \nonumber \\
  &\qquad -\E[ Y_{k} \sum_{A_{k}}  U_{I_k} +\tilde{Y}_{k} \sum_{\tilde{A}_{k}} U_{I_{k}} ] \E[Z_{\ell} \sum_{A_{\ell}}  (U+V)_{J_\ell} +\tilde{Z}_{\ell} \sum_{\tilde{A}_{\ell}} (U+V)_{J_{\ell}} ]  \Big\}\nonumber, 
  \end{align}
  where $U_{I_{k}},  (U+V)_{J_\ell},  Y_{k}, \tilde{Y}_{k}, Z_{\ell}, \tilde{Z}_{\ell}$ are as in (\ref{eqn:U,Y_sp}) and $A_k, \tilde{A}_k$ are as defined in (\ref{def:A_p_sp}).

By arguments similar to those given  in Step 1, we can show that right side of (\ref{eqn:cov_e_sp}) has non-zero contribution only when $k=p, \ell=q$ with  $\{i_1,i_2,\ldots,i_{k}\}\cap \{j_1,j_2,\ldots,j_{r}\} \neq\emptyset$ and $k=p-1, \ell=q-1$ with  $\{i_1,i_2,\ldots,i_{k}\}\cap \{j_1,j_2,\ldots,j_{r}\}=\emptyset$. \\
 
 \noindent \textbf{Case I.} \textbf{$k=p$ and $\ell=q$ :} For  $k =p$ and $\ell=q$;
$Y_p  = 2, \  \tilde{Y}_p = 0, \ Z_q  = 2 \mbox{ and} \ \tilde{Z}_q  = 0$.
So in this case, (\ref{eqn:cov_e_sp}) will be
	\begin{align*} 
	\lim_{n\to\infty}  \Cov\big(\eta_p(t_1),\eta_q(t_2)\big)
	= \displaystyle\lim_{n\to\infty} \frac{1}{n^{\frac{p+q}{2}-1}} \sum_ {r=0}^{q} \binom{q}{r} \sum_{m=1}^{ \min \{p,r\} } \sum_{C_m} \Big\{\E[U_{I_p}U_{J_r}V_{J_{r+1,q}} ] -\E[ U_{I_p}] \E[U_{J_r} V_{J_{r+1,q}}  ]\Big\}.	
	\end{align*}
Note that if $r=0$, then that typical term of right side of the above last equation will be zero due to independence of $u_i$'s and $v_i$'s, and hence
\begin{align*} 
	\lim_{n\to\infty}  \Cov\big(\eta_p(t_1),\eta_q(t_2)\big)
	= \displaystyle\lim_{n\to\infty} \frac{1}{n^{\frac{p+q}{2}-1}} \sum_ {r=1}^{q} \binom{q}{r} \sum_{m=1}^{ \min \{p,r\} } \sum_{C_m} \Big\{\E[U_{I_p}U_{J_r}V_{J_{r+1,q}} ] -\E[ U_{I_p}] \E[U_{J_r} V_{J_{r+1,q}}  ]\Big\}.		\nonumber 
	\end{align*}
Note that the  above expression  is same as (\ref{eqn:lim_cov_o,k=p,1_sp}). Therefore $\lim_{n\to\infty}  \Cov\big(\eta_p(t_1),\eta_q(t_2)\big)$ will be equal to the right side of  \eqref{eqn:lim_cov_o,k=p_sp}.
\\
	
\noindent \textbf{Case II.} \textbf{$k=p-1$ and $\ell=q-1$ :} 
 For  $k =p-1$ and $\ell=q-1$,
  $$Y_{p-1}= 2u_0, \tilde{Y}_{p-1}=2u_{\frac{n}{2}}, Z_{q-1}= 2(u_0+v_0) \mbox{ and } \tilde{Z}_{q-1}=2(u_{\frac{n}{2}}+v_{\frac{n}{2}}).$$ 
  So in this case,  (\ref{eqn:cov_e_sp}) will be 
  \begin{align}\label{eqn:cov_e,k<p_sp}
  &\lim_{n\to\infty} \Cov\big(\eta_p(t_1),\eta_q(t_2)\big)  \\
   &= \lim_{n\to\infty} \frac{1}{4n^{\frac{p+q}{2}-1}} p q\Big[ \E  \{ 4u_0 (u_0+v_0) \sum_{A_{p-1},A_{q-1}}  U_{I_{p-1}} (U+V)_{J_{q-1}} \}  + \E \{ 4 u_{\frac{n}{2}}  (u_0+v_0) \sum_{ \tilde{A}_{p-1},A_{q-1}}  U_{I_{p-1}} (U+V)_{J_{q-1}} \}  \nonumber \\
& +\E  \{ 4u_0 (u_{\frac{n}{2}}+v_{\frac{n}{2}}) \sum_{A_{p-1}, \tilde{A}_{q-1}}  U_{I_{p-1}} (U+V)_{J_{q-1}} \} + \E \{ 4 u_{\frac{n}{2}}  (u_{\frac{n}{2}}+v_{\frac{n}{2}}) \sum_{ \tilde{A}_{p-1}, \tilde{A}_{q-1}}  U_{I_{p-1}} (U+V)_{J_{q-1}} \} \nonumber \\
&   - \E[(2u_0\sum_{A_{p-1}} U_{I_{p-1}} + 2u_{\frac{n}{2}} \sum_{ \tilde{A}_{p-1}}  U_{I_{p-1}} ]  \E[ 2(u_0+v_0)\sum_{A_{q-1}} (U+V)_{J_{q-1}} + 2(u_{\frac{n}{2}}+v_{\frac{n}{2}}) \sum_{ \tilde{A}_{q-1}}  (U+V)_{J_{q-1}}] \Big]. \nonumber 
\end{align}
Note that, in this case we get non-zero contribution when $\{i_1,i_2,\ldots,i_{p-1}\}\cap \{j_1,j_2,\ldots,j_{q-1}\}=\emptyset$.
 Now similar to Case II of Step 1 and from (\ref{eqn:A,tildeA}), right side of (\ref{eqn:cov_e,k<p_sp}) will be
 \begin{align*} 
	&\lim_{n\to\infty} \Cov\big(\eta_p(t_1),\eta_q(t_2)\big)  \nonumber \\
  &= \lim_{n\to\infty} \frac{1}{4n^{\frac{p+q}{2}-1}} p q \Big[ \E  \big\{ 4u_0 (u_0+v_0) \sum_{A_{p-1},A_{q-1}}  U_{I_{p-1}} (U+V)_{J_{q-1}} \big\} \Big] \nonumber \\
    &= \left\{\begin{array}{ll} 	 \nonumber
		 	\displaystyle\lim_{n\to\infty} \frac{pq}{n^{\frac{p+q}{2}-1}} \sum_ {r=0,2,}^{q-1} \binom{q-1}{r}  \sum_{A_{p-1}, A_{q-1}} \E[ u_0(u_0 +v_0)] \E[U_{I_{p-1}}U_{J_r}V_{J_{r+1,q-1}} ] & \text{if}\ p, \mbox{} q \mbox{ both are odd},\\\\
			0 & \text{otherwise}, 	 	 
		 	  \end{array}\right.	\\ 
		 &= \left\{\begin{array}{ll} 
		 	 \displaystyle \frac{pq}{2^{(\frac{p+q}{2}-1)}}  \sum_ {r=0,2,}^{q-1} \binom{q-1}{r}  t_1^{ \frac{p+1+r}{2}} (t_2-t_1)^{ \frac{q-1-r}{2}} 
		 	 d_r  & \text{if}\ p,  q \mbox{ both are odd},\\\\ 
		 	 0 & \text{otherwise},
		 	  \end{array}\right.	
	 \end{align*} 
	 where the last equality comes from (\ref{eqn:lim_cov_o,k<p_sp}).
	 
Now, by combining Case I and Case II, 
we get
\begin{align} \label{eqn:lim_cov_e_sp}
	&\lim_{ \substack{{n\to\infty} \\ {n \mbox{ even} } }}  \Cov\big(\eta_p(t_1),\eta_q(t_2)\big)  \\ 
	& =  \left\{\begin{array}{ll} 	 
		 	\displaystyle \sum_ {r=2,4,}^{q} \binom{q}{r}  t_1^{ \frac{p+r}{2}} (t_2-t_1)^{ \frac{q-r}{2}} \Big\{  \frac{2a_1} {2^{\frac{p+q-4}{2}}  } \\ 
		 	\qquad \displaystyle + \sum_{m=2}^{ \min\{ \frac{p}{2},\frac{r}{2} \} } \Big( \frac{a_m}{2^{\frac{p+r-4m}{2}}  } \sum_{s=0}^{2m}\binom{2m}{s}^2 s!(2m-s)! \ h_{2m}(s) \Big) \Big\} & \text{if}\ p, \mbox{} q \mbox{ both are even},\\\\
		 	\displaystyle \sum_ {r=1,3,}^{q} \binom{q}{r}  t_1^{ \frac{p+r}{2}} (t_2-t_1)^{ \frac{q-r}{2}}  \sum_{m=0}^{ \min\{ \frac{p-1}{2},\frac{r-1}{2} \} }  \frac{b_m}{2^{\frac{p+r-4m-2}{2}}  } \\\\
		 \qquad	\displaystyle \sum_{s=0}^{2m+1} \Big( \binom{2m+1}{s}^2 s!(2m+1-s)! \ h_{2m+1}(s) \Big)  \\\\
		  \displaystyle \qquad +  \frac{pq}{2^{(\frac{p+q}{2}-1)}}  \sum_ {r=0,2,}^{q-1} \binom{q-1}{r}  t_1^{ \frac{p+1+r}{2}} (t_2-t_1)^{ \frac{q-1-r}{2}}  d_r
		 & \text{if}\ p, \mbox{} q \mbox{ both are odd,} \\\\	 	
			0 & \text{otherwise}.
		 	  \end{array}\right.	\nonumber	 	  
	\end{align}
	Finally \eqref{eqn:cov_sp} follows from 	(\ref{eqn:lim_cov_o_sp}) and (\ref{eqn:lim_cov_e_sp}).  
This completes the proof of Theorem \ref{thm:symcovar_sp}.
\end{proof}
The following result is a consequence of  Theorem \ref{thm:symcovar_sp}. 
\begin{corollary}\label{cor:existence of Gaussian process_sp} 
For $p\geq 2$, there exists a centred Gaussian process $\{N_p(t);t\geq 0\}$  such that
\begin{align} \label{eqn:cov_gaussian_sp}
	&\lim_{ \substack{{n\to\infty} }}  \Cov(N_p(t_1),N_p(t_2))  \\ 
	& =  \left\{\begin{array}{ll} 	 
		 	\displaystyle \sum_ {r=2,4,}^{p} \binom{p}{r}  t_1^{ \frac{p+r}{2}} (t_2-t_1)^{ \frac{p-r}{2}} \Big\{  \frac{2\tilde{a}_1} {2^{p-2}} +\sum_{m=2}^{\frac{r}{2} } \Big( \frac{\tilde{a}_m}{2^{\frac{p+r-4m}{2}}  } \\
		 	\qquad \displaystyle \sum_{s=0}^{2m}\binom{2m}{s}^2 s!(2m-s)! \ h_{2m}(s) \Big) \Big\} & \text{if}\ p \mbox{ is even},\\\\
		 	\displaystyle \sum_ {r=1,3,}^{p} \binom{p}{r}  t_1^{ \frac{p+r}{2}} (t_2-t_1)^{ \frac{p-r}{2}}  \sum_{m=0}^{\frac{r-1}{2} }  \frac{\tilde{b}_m}{2^{\frac{p+r-4m-2}{2}}  }  \sum_{s=0}^{2m+1} \Big( \binom{2m+1}{s}^2  \\
		 \qquad	\displaystyle s!(2m+1-s)! \ h_{2m+1}(s) \Big) \\
		 \qquad	\displaystyle + \frac{p^2}{2^{p-1}} \sum_ {r=0,2,}^{p-1} \binom{p-1}{r}  t_1^{ \frac{p+1+r}{2}} (t_2-t_1)^{ \frac{p-1-r}{2}}	 \tilde{d}_r
		 & \text{if}\ p \mbox{ is odd,} \\\\	 	
			0 & \text{otherwise}, 	 
		 	  \end{array}\right.	\nonumber		 	  
	\end{align}	 
	where $h_m(s)$ is as in Result \ref{result:def_h} and
	\begin{align*} 
	 \tilde{a}_m &=	\binom{p}{p-2m}\binom{p-2m}{\frac{p-2m}{2}} (\frac{p-2m}{2})! \binom{r}{r-2m}\binom{r-2m}{\frac{r-2m}{2}} (\frac{r-2m}{2})! \\
	  &\qquad  \binom{p}{p-2r}\binom{p-2r}{\frac{p-2r}{2}} (\frac{p-2r}{2})!, \nonumber \\
	 \tilde{b}_m &=	\binom{p}{p-2m-1}\binom{p-2m-1}{\frac{p-2m-1}{2}} (\frac{p-2m-1}{2})!\binom{r}{r-2m-1} \nonumber\\ 
	&\qquad  \binom{r-2m-1} {\frac{r-2m-1}{2}} (\frac{r-2m-1}{2})! \binom{p}{p-2r-1}\binom{p-2r-1}{\frac{p-2r-1}{2}} (\frac{p-2r-1}{2})!, \nonumber \\
	\tilde{d}_{r} &=  \binom{p-1}{(p-1)/2}\binom{r}{r/2} \binom{p-r-1}{(p-r-1)/2} \l((p-1)/2\r)! \l(r/2\r)! \l((p-r-1)/2\r)!.
	\end{align*}
\end{corollary}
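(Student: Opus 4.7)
The plan is to invoke the Kolmogorov consistency (extension) theorem, exactly as indicated for Corollary \ref{res:existence of Gaussian process_rp}. First I would define a candidate symmetric kernel $K \colon [0,\infty)^2 \to \mathbb{R}$ by setting $K(s,t)$ equal to the right-hand side of \eqref{eqn:cov_gaussian_sp} for $0 < s \leq t$, extending by $K(t,s) := K(s,t)$, and setting $K(0,t) := 0$. The latter is consistent with the displayed formula because each surviving summand carries a strictly positive power of $t_1$ (namely $t_1^{(p+r)/2}$ or $t_1^{(p+1+r)/2}$ with $r \geq 0$), so all terms vanish at $t_1 = 0$.

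The crucial observation is that specialising Theorem \ref{thm:symcovar_sp} to $q=p$ yields $K(s,t) = \lim_{n\to\infty} \Cov(\eta_p(s), \eta_p(t))$ for $0 < s \leq t$. A direct comparison of constants shows $\tilde a_m = a_m\big|_{q=p}$, $\tilde b_m = b_m\big|_{q=p}$, and $\tilde d_r = d_r\big|_{q=p}$, while the exponents $\tfrac{p+q-4}{2}$ and $\tfrac{p+q}{2}-1$ in Theorem \ref{thm:symcovar_sp} collapse to $p-2$ and $p-1$ respectively. Consequently, for every finite set of times $0 \leq t_1 < t_2 < \cdots < t_r$, the matrix $\bigl(K(t_i, t_j)\bigr)_{i,j=1}^r$ is the entrywise limit of the bona fide covariance matrices $\bigl(\Cov(\eta_p(t_i), \eta_p(t_j))\bigr)_{i,j=1}^r$ of real-valued random variables. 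Since positive semidefiniteness is preserved under entrywise limits, $\bigl(K(t_i, t_j)\bigr)_{i,j=1}^r$ is itself positive semidefinite, and $K$ is symmetric by construction.

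With $K$ symmetric and positive semidefinite on $[0,\infty)^2$, I would then associate to each finite tuple $(t_1, \ldots, t_r)$ of distinct times the centred multivariate normal measure $\mu_{t_1, \ldots, t_r}$ on $\mathbb{R}^r$ with covariance matrix $\bigl(K(t_i, t_j)\bigr)_{i,j}$. This family of finite-dimensional distributions is trivially consistent under permutation of coordinates and under marginalisation, so the Kolmogorov extension theorem produces a stochastic process $\{N_p(t); t \geq 0\}$ on some probability space whose finite-dimensional distributions are these Gaussian marginals. In particular, $\{N_p(t); t \geq 0\}$ is a centred Gaussian process with $\Cov(N_p(s), N_p(t)) = K(s,t)$, which is the desired conclusion.

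There is no substantial obstacle, since all the combinatorial content has already been extracted in Theorem \ref{thm:symcovar_sp}. The only point requiring any care is the positive semidefiniteness of $K$, and this is immediate from the limit argument above rather than from any direct (and otherwise unpleasant) verification involving the explicit constants $\tilde a_m, \tilde b_m, h_m(s), \tilde d_r$.
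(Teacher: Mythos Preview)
Your proposal is correct and follows the same approach as the paper, which simply states that the existence follows from Theorem~\ref{thm:symcovar_sp} and the Kolmogorov existence theorem. You have merely spelled out the details the paper leaves implicit---in particular, the observation that positive semidefiniteness of the limiting kernel is inherited from the pre-limit covariance matrices.
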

\begin{proof}

The existence of such a Gaussian process is due to Theorem \ref{thm:symcovar_sp} and Kolmogrov existence theorem.
\end{proof}

The following remark gives us a generalization (in the sense of input entries) of Theorem \ref{thm:revmulti_rp} and Theorem \ref{thm:symmulti_sp}. 
\begin{remark}
	  In the proof of Theorem \ref{thm:revmulti_rp} and \ref{thm:symmulti_sp}, we have used the following properties of the standard Brownian motion; $\{b_n(t); t \geq 0\}_{n\geq 0}$ is an  independent sequence of stochastic process, independent increment property of Brownian motion, $\E[b_n(t)]=0$ for all $t$ and $\sup_{1\leq j\leq \ell}\E|b_i(t_j)|^{2m} < \infty$, for all $\ell= 1, 2, \ldots$. Therefore our proof of Theorem \ref{thm:revmulti_rp} and \ref{thm:symmulti_sp} is also true for any other independent stochastic processes $\{X_n(t) ; t\geq 0\}_{n\geq 0}$, which have the following properties: \\
	For fixed $n$
	\begin{enumerate}
		\item [(i)] $\E[X_n(t)]=0$ for all $t$.
		\item [(ii)] $\{X_n(t) ; t\geq 0\}$ has independent increment property.
		\item [(iii)] $\E(|X_n(t)|^2)=\alpha_2<\infty \ \mbox{for all}\ n\geq 0$ and 
		 $\sup_{n \geq 0}\E(|X_n(t)|^k)<\infty \ \mbox{for}\ k\geq 3.$
	\end{enumerate}
\end{remark}

\section{Proof of Theorem \ref{thm:symmulti_sp}}\label{sec:joint convergence_sp}
We first recall the notion of connected vectors and clusters from Definitions \ref{def:connected_rev}
and \ref{def:cluster_rev} in Section \ref{sec:joint convergence_sp}. Also recall $A_{p}$ and $\tilde{A}_p$ from \eqref{def:A_p_sp} in Section \ref{sec:joint convergence_sp},
\begin{align*}
A_{p} & =\Big\{(j_1,\ldots,j_{p})\suchthat \sum_{i=1}^{p}\epsilon_i j_i=0\; \mbox{(mod n)}, \epsilon_i\in\{+1,-1\}, 1\le j_1,\ldots,j_{p}< \frac{n}{2}\Big\}, \\
\tilde{A}_{p} &=\Big\{(j_1,\ldots,j_{p})\suchthat \sum_{i=1}^{p}\epsilon_i j_i=0\; \mbox{(mod } \frac{n}{2}) \mbox{ and } \sum_{i=1}^{p}\epsilon_i j_i \neq 0\; \mbox{(mod }n),  \epsilon_i\in\{+1,-1\}, 1\le j_1,\ldots,j_{p}< \frac{n}{2}\Big\}.
\end{align*}  
We define a subset $B_{P_\ell}$ of the Cartesian product  $ A_{p_1} \times A_{p_2} \times \cdots \times A_{p_\ell}$, where $A_{p_i}$ is as  in \eqref{def:A_p_sp} for all $i=1,2,\ldots,\ell$.  
\begin{definition}\label{def:B_{P_l}_sp}
	Let $\ell \geq 2$ and  $P_\ell = (p_1,p_2, \ldots, p_\ell ) $. $B_{P_\ell}$ is a subset of $ A_{p_1} \times A_{p_2} \times \cdots \times A_{p_\ell}$ such that  $ (J_1, J_2, \ldots, J_\ell) \in B_{P_\ell} $ if 
	\begin{enumerate} 
		\item[(i)] $\{J_1, J_2, \ldots, J_\ell\} $ form a cluster, 
		\item[(ii)] each element in  $\displaystyle{\cup_{i=1}^{\ell} S_{J_i} }$ has  multiplicity greater than or equal to two. 
	\end{enumerate}
\end{definition} 
 The next Result gives us the cardinality of $B_{P_\ell}$.
\begin{result} \label{res:|B_{P_l}|_sp} (Lemma 13 \cite{SC_independent_2020})
	For  $\ell \geq 3 $,  
	\begin{equation*} 
	|B_{P_\ell }| = o \big(n^{\frac{p_1+p_2 + \cdots + p_\ell -\ell}{2} }\big).
	\end{equation*}
\end{result}

The following lemma is an easy consequence of Result \ref{res:|B_{P_l}|_sp}.
\begin{lemma}\label{lem:maincluster_sp}
Suppose $\{J_1, J_2, \ldots, J_\ell \} $ form a cluster where $J_i\in A_{p_i}$ with $p_i\geq 2$ for $1\leq i\leq \ell$. Then for $\ell \geq 3,$
\begin{equation}\label{equation:maincluster_sp}
	\frac{1}{ n^{\frac{p_1+p_2+ \cdots + p_\ell - \ell}{2}} } \sum_{A_{p_1}, A_{p_2}, \ldots, A_{p_\ell}} \E\Big[\prod_{k=1}^{\ell}\Big(b_{J_k}(t_k) - \E(b_{J_k}(t_k))\Big)\Big] = o(1),
\end{equation}
where $0<t_1 \leq t_2 \leq \cdots \leq t_\ell $ and  for $k \in \{1,2, \ldots, \ell \} $, 
$$J_k = (j_{k,1}, j_{k,2}, \ldots, j_{k,p_k} ) \ \mbox{and} \ b_{J_k}(t_k) = b_{j_{k,1}}(t_k) b_{j_{k,2}}(t_k) \cdots b_{j_{k,p_k}}(t_k).$$	
\end{lemma}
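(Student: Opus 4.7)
The plan is to follow exactly the template of the reverse circulant analogue (Lemma \ref{lem:maincluster_rp}), with $A_{2p_k}$ replaced by $A_{p_k}$ and the exponent $p_1+\cdots+p_\ell-\ell/2$ replaced by $(p_1+\cdots+p_\ell-\ell)/2$. The two ingredients are the same: a reduction of the summation domain to $B_{P_\ell}$ and a uniform bound on the mixed Brownian moments, after which Result \ref{res:|B_{P_l}|_sp} closes the argument.

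First I would reduce the sum. Since each $b_i(t)$ is a centred Brownian motion, $\E[b_i(t)]=0$, and by the independence of $\{b_i(t)\}_{i\ge 0}$, the expectation
\[
\E\Big[\prod_{k=1}^{\ell}\bigl(b_{J_k}(t_k) - \E(b_{J_k}(t_k))\bigr)\Big]
\]
vanishes unless every index appearing in $\bigcup_k S_{J_k}$ has multiplicity at least two in the full collection, and unless the vectors are linked into one cluster (a stand-alone centred factor forces the expectation to zero). Hence only $(J_1,\ldots,J_\ell)\in B_{P_\ell}$ contribute, giving the identity
\[
\sum_{A_{p_1},\ldots,A_{p_\ell}}\E\Big[\prod_{k=1}^{\ell}\bigl(b_{J_k}(t_k)-\E b_{J_k}(t_k)\bigr)\Big]
=\sum_{(J_1,\ldots,J_\ell)\in B_{P_\ell}}\E\Big[\prod_{k=1}^{\ell}\bigl(b_{J_k}(t_k)-\E b_{J_k}(t_k)\bigr)\Big].
\]

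Next I would obtain a uniform bound on each summand. Since $b_i(t_k)\sim N(0,t_k)$, for any integer $m\ge 1$ one has $\E|b_i(t_k)|^{2m}=(2m)!\,t_k^m/(2^m m!)\le (2m)!\,t_\ell^m/(2^m m!)$. Hence by H\"older (or by direct expansion of the centred product), there exists a constant $\alpha_\ell>0$ depending only on $t_\ell$ and $p_1,\ldots,p_\ell$ such that
\[
\Bigl|\E\bigl[\textstyle\prod_{k=1}^{\ell}(b_{J_k}(t_k)-\E b_{J_k}(t_k))\bigr]\Bigr|\le \alpha_\ell
\]
for every choice of $(J_1,\ldots,J_\ell)\in A_{p_1}\times\cdots\times A_{p_\ell}$.

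Combining the two steps,
\[
\Bigl|\sum_{A_{p_1},\ldots,A_{p_\ell}}\E\bigl[\textstyle\prod_{k=1}^{\ell}(b_{J_k}(t_k)-\E b_{J_k}(t_k))\bigr]\Bigr|\le |B_{P_\ell}|\,\alpha_\ell,
\]
so dividing by $n^{(p_1+\cdots+p_\ell-\ell)/2}$ and invoking Result \ref{res:|B_{P_l}|_sp}, which asserts exactly that $|B_{P_\ell}|=o(n^{(p_1+\cdots+p_\ell-\ell)/2})$ for $\ell\ge 3$, yields \eqref{equation:maincluster_sp}. There is no genuine obstacle; the only point to verify carefully is that the reduction to $B_{P_\ell}$ correctly uses both conditions in Definition \ref{def:B_{P_l}_sp} (multiplicity $\ge 2$ and cluster structure), both of which are forced by the centring and by the fact that Brownian motions indexed by different $n$ are independent.
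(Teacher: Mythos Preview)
Your proposal is correct and is precisely the argument the paper has in mind: the paper's own proof just reads ``similar to the proof of Lemma \ref{lem:maincluster_rp}, so we skip it,'' and your write-up is exactly that transcription with $A_{2p_k}\to A_{p_k}$, the exponent adjusted, and Result \ref{res:|B_{P_l}|_sp} in place of Result \ref{res:cluster_rp}. One small wording caution: the cluster condition in $B_{P_\ell}$ is given by the hypothesis of the lemma (the sum is implicitly over tuples forming a cluster), not forced by the centring alone---if the $J_k$ split into two nontrivial clusters the expectation need not vanish---so your parenthetical about ``a stand-alone centred factor'' only covers singletons; but since the lemma already assumes a cluster, nothing is lost.
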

\begin{proof} The proof of this lemma is similar to the proof of Lemma \ref{lem:maincluster_rp}. So, we skip it.
\end{proof}


\begin{lemma}\label{lem:cluster,decompose_A_sp}
	Suppose $J_i\in A_{d_i}$ and $d_i\geq 2$ for $1\leq i\leq \ell$. Then for $\ell \geq 3$,
	\begin{align*}
	\sum_{A_{d_1}, A_{d_2}, \ldots, A_{d_\ell}} \E\Big[\prod_{k=1}^{\ell}\Big(b_{J_k}(t_k) - \E(b_{J_k}(t_k))\Big)\Big]
	 &= \left\{\begin{array}{ll} 
		 	O( n^{\frac{d_1+d_2+ \cdots + d_\ell - \ell}{2}}) & \text{if} \ \{ J_1, J_2, \ldots, J_\ell\} \mbox{ decomposes } \\ 
		 	& \mbox{ into clusters of length } 2  \\\\
		 	 o( n^{\frac{d_1+d_2+ \cdots + d_\ell - \ell}{2}}) & \text{otherwise}, 	 
		 	  \end{array}\right.
	\end{align*}
where $0<t_1 \leq t_2 \leq \cdots \leq t_\ell $ and  for $k \in \{1,2, \ldots, \ell \} $, 
$$J_k = (j_{k,1}, j_{k,2}, \ldots, j_{k,d_k} ) \ \mbox{and} \ b_{J_k}(t_k) = b_{j_{k,1}}(t_k) b_{j_{k,2}}(t_k) \cdots b_{j_{k,d_k}}(t_k).$$	
\end{lemma}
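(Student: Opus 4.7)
The plan is to reduce the statement to Lemma \ref{lem:maincluster_sp} by partitioning $\{J_1,J_2,\ldots,J_\ell\}$ into its maximal clusters and then exploiting independence across distinct clusters. Let me first observe that, since $\{b_i(t);\,t\geq 0\}_{i\geq 0}$ is a family of independent Brownian motions with mean zero, the expectation inside the sum vanishes whenever any $J_k$ is isolated, i.e.\ is not connected to any other $J_j$ in the sense of Definition \ref{def:connected_rev}; this is because the factor corresponding to an isolated $J_k$ separates out and equals $\E[b_{J_k}(t_k)-\E b_{J_k}(t_k)]=0$. Hence only configurations in which every $J_k$ lies in a cluster of length at least $2$ can contribute.

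Next, let $\mathcal{C}_1,\mathcal{C}_2,\ldots,\mathcal{C}_s$ denote the maximal clusters obtained from $\{J_1,\ldots,J_\ell\}$, of sizes $\ell_1,\ell_2,\ldots,\ell_s$ respectively, with $\ell_1+\cdots+\ell_s=\ell$ and each $\ell_i\geq 2$. Because distinct clusters share no common indices (by maximality), the Brownian motions indexed by one cluster are independent of those indexed by another, so the expectation factorizes:
\begin{equation*}
\E\Big[\prod_{k=1}^{\ell}\big(b_{J_k}(t_k)-\E b_{J_k}(t_k)\big)\Big]
= \prod_{i=1}^{s}\E\Big[\prod_{J_k\in\mathcal{C}_i}\big(b_{J_k}(t_k)-\E b_{J_k}(t_k)\big)\Big].
\end{equation*}
Correspondingly, the sum over $A_{d_1}\times\cdots\times A_{d_\ell}$ splits, up to the combinatorics of grouping, into a product of sums indexed by the clusters, with each cluster contributing a sum over $\prod_{J_k\in\mathcal{C}_i}A_{d_k}$ restricted to those tuples that form a single cluster.

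For each cluster $\mathcal{C}_i$ of size $\ell_i\geq 3$, Lemma \ref{lem:maincluster_sp} gives that its contribution is $o(n^{(D_i-\ell_i)/2})$, where $D_i:=\sum_{J_k\in\mathcal{C}_i}d_k$. For a cluster $\mathcal{C}_i$ of size $\ell_i=2$, say $\mathcal{C}_i=\{J_a,J_b\}$, the contribution is at most
\begin{equation*}
\sum_{A_{d_a},A_{d_b}}\big|\E\big[(b_{J_a}(t_a)-\E b_{J_a}(t_a))(b_{J_b}(t_b)-\E b_{J_b}(t_b))\big]\big|
= O\big(n^{(d_a+d_b-2)/2}\big),
\end{equation*}
since non-zero contributions require that every index in $S_{J_a}\cup S_{J_b}$ appear with multiplicity at least two, forcing at least one cross-match between $J_a$ and $J_b$ and leaving at most $(d_a+d_b)/2$ free indices subject to one effective modular constraint from membership in $A_{d_a}$ or $A_{d_b}$ (compare the bookkeeping in the proof of Theorem \ref{thm:symcovar_sp}, in particular the estimate on $|B_{p,\ell}^m|$ in \eqref{card_B_k_l}); the uniform boundedness of Brownian moments on $[0,t_\ell]$ controls each summand by a constant depending only on the $d_k$ and $t_\ell$.

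Multiplying the per-cluster bounds and using $D_1+\cdots+D_s=d_1+\cdots+d_\ell$ and $\ell_1+\cdots+\ell_s=\ell$, the total contribution is $O(n^{(d_1+\cdots+d_\ell-\ell)/2})$ when every $\ell_i=2$ (that is, when the collection decomposes into clusters of length $2$), and is $o(n^{(d_1+\cdots+d_\ell-\ell)/2})$ as soon as some $\ell_i\geq 3$, which proves both cases. The main obstacle I anticipate is the careful pair-cluster bound: one must verify that the two constraints (the modular relations defining $A_{d_a}$ and $A_{d_b}$) combined with the mandatory cross-match between $J_a$ and $J_b$ do not allow more than $(d_a+d_b-2)/2$ free variables; this is handled exactly by the opposite-sign pair-matching analysis that underlies the estimates in Section \ref{sec:cov_sp}, so no new idea beyond Lemma \ref{lem:maincluster_sp} and Theorem \ref{thm:symcovar_sp} is required.
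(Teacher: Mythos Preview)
Your proposal is correct and follows the same cluster-decomposition strategy that the paper relies on (the paper itself omits the proof, deferring to Lemma~12 of \cite{SC_independent_2020}, but the argument there and in the parallel reverse-circulant case of Section~\ref{sec:joint convergence_rp} is precisely the one you give: factor over maximal clusters, invoke Lemma~\ref{lem:maincluster_sp} on clusters of length~$\geq 3$, and bound pair clusters via $|B_{P_2}|=O(n^{(d_a+d_b-2)/2})$). One small point of presentation: the sum over $A_{d_1}\times\cdots\times A_{d_\ell}$ does not literally split as a product---rather, you first partition the summation domain according to which cluster decomposition the tuple $(J_1,\ldots,J_\ell)$ induces, and only then, for each fixed cluster pattern, does the expectation (and hence the sum) factor over clusters; this is implicit in what you wrote but worth making explicit.
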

The  proof of this lemma is similar to the proof of Lemma $12$ of \cite{SC_independent_2020}, where only the input entries are different but the idea is same. So, we skip it.

\begin{remark} \label{rem:cluster,decompose_F_sp}
 	Suppose $J_i\in F_{d_i}$ and $d_i\geq 2$ for $1\leq i\leq \ell$, where $F_{d_i}$ is $A_{d_i}$ or $\tilde{A}_{d_i}$. Then from (\ref{eqn:A,tildeA}) and Lemma \ref{lem:cluster,decompose_A_sp}, we get
	\begin{align*}
	\sum_{F_{d_1}, \ldots, F_{d_\ell}} \E\Big[\prod_{k=1}^{\ell}\Big(b_{J_k}(t_k) - \E(b_{J_k}(t_k))\Big)\Big]
	 &= \left\{\begin{array}{ll} 
		 	O( n^{\frac{d_1+d_2+ \cdots + d_\ell - \ell}{2}}) & \text{if} \ \{ J_1, J_2, \ldots, J_\ell\} \mbox{ decomposes into clusters } \\ 
		 	& \mbox{ of length }2 \mbox{ and } F_{d_i} =A_{d_i} \forall \ i=1, 2, \ldots, \ell  \\\\
		 	 o( n^{\frac{d_1+d_2+ \cdots + d_\ell - \ell}{2}}) & \text{otherwise.}	 	 
		 	  \end{array}\right.
	\end{align*}
\end{remark}

\begin{proof}[Proof of Theorem \ref{thm:symmulti_sp}] 
The proof of Theorem \ref{thm:symmulti_sp}  is similar to the proof of Theorem \ref{thm:revmulti_rp}.
So, we point out only the main steps of the proof and skip the details.

		So, it is enough to show that, for $p_1, p_2, \ldots , p_\ell \geq 2$,
	\begin{align} \label{eqn:moment,product_w_p}
	\lim_{n\tends \infty}  \E[\eta_{p_1} (t_1)\eta_{p_2}(t_2) \cdots \eta_{p_\ell}(t_\ell)]=\E[N_{p_1}(t_1)N_{p_2}(t_2) \cdots N_{p_\ell}(t_\ell)].
	\end{align}
As before, we show (\ref{eqn:moment,product_w_p}) for odd and even value of $n$ separately. 

First suppose $n$ is odd. Then by using trace formula (\ref{trace,formula_sp}), we get 
	\begin{align}\label{eqn:expectation_thm2_sp} 
	\lim_{n\tends \infty}  \E[\eta_{p_1} (t_1)\eta_{p_2}(t_2) \cdots \eta_{p_\ell}(t_\ell)]
	&=\lim_{n\tends \infty}  \frac{1}{n^{\frac{p_1 + p_2 + \cdots +p_\ell -\ell}{2}}} \sum_{d_1=0}^{p_1} \cdots \sum_{d_\ell=0}^{p_\ell} \binom{p_1}{d_1} \cdots \binom{p_\ell}{d_\ell} \sum_{A_{d_1}, \ldots, A_{d_\ell}} \nonumber\\
	&\qquad \E \Big[ \prod_{k=1}^{\ell}  \Big((b_0(t_k))^{p_k-d_k} b_{J_{d_k}}(t_k) - \E [(b_0(t_k))^{p_k-d_k} b_{J_{d_k}}(t_k)] \Big) \Big] \nonumber \\ 
	& = \lim_{n\tends \infty}  \frac{1}{n^{\frac{p_1 + p_2 + \cdots +p_\ell -\ell}{2}}} \sum_{A_{p_1}, \ldots, A_{p_\ell}} \E\Big[\prod_{k=1}^{\ell}\Big( b_{J_{d_k}}(t_k) - \E [ b_{J_{d_k}}(t_k)] \Big)\Big],
	\end{align}
	where the last equality comes due to Lemma \ref{lem:cluster,decompose_A_sp}. Because 
	$$\sum_{A_{d_1}, \ldots, A_{d_\ell}} \E \Big[ \prod_{k=1}^{\ell}  \Big((b_0(t_k))^{p_k-d_k} b_{J_{d_k}}(t_k) - \E [(b_0(t_k))^{p_k-d_k} b_{J_{d_k}}(t_k)] \Big) \Big]  \leq O(n^{\frac{d_1 + d_2 + \cdots +d_\ell -\ell}{2}}).$$
Now on combining Lemma \ref{lem:cluster,decompose_A_sp} for $d_i=p_i$ and \eqref{eqn:expectation_thm2_sp}, we get
	\begin{align*} \label{eq:multisplit_sp}
	& \quad \lim_{n\tends \infty}  \E[\eta_{p_1}(t_1) \eta_{p_2}(t_2)\cdots \eta_{p_\ell}(t_\ell)]\\
	& \quad  =\lim_{n\to\infty} \frac{1}{n^{\frac{p_1 + p_2 + \cdots +p_\ell -\ell}{2}}} \sum_{A_{p_1}, \ldots, A_{p_\ell}} \E\Big[\prod_{k=1}^{\ell}\Big( b_{J_{p_k}}(t_k) - \E [ b_{J_{p_k}}(t_k)] \Big)\Big]\\
	& \quad  =\lim_{n\to\infty} \frac{1}{n^{\frac{p_1 + p_2 + \cdots +p_\ell -\ell}{2}}} \sum_{\pi \in \mathcal P_2(\ell)} \prod_{i=1}^{\frac{\ell}{2}}  \sum_{A_{p_{y(i)}},\ A_{p_{z(i)}}} \E\big[ \{ b_{J_{y(i)}} (t_{y(i)}) - \E b_{J_{y(i)}}(t_{y(i)})\} \{b_{J_{z(i)}}(t_{z(i)}) - \E b_{J_{z(i)}} (t_{z(i)}) \}\big],
	\end{align*}
	where  $\pi = \big\{ \{y(1), z(1) \}, \ldots , \{y(\frac{\ell}{2}), z(\frac{\ell}{2})  \} \big\}\in \mathcal P_2(\ell)$ and $\mathcal P_2(\ell)$ is the set of all pair partition of $ \{1, 2, \ldots, \ell\} $. Using Theorem \ref{thm:symcovar_sp}, from the last equation, we get
	\begin{equation}\label{eqn:product of expectation_sp}
	\lim_{n\tends \infty}  \E[\eta_{p_1}(t_1)\eta_{p_2}(t_2) \cdots \eta_{p_\ell}(t_\ell)]
	=\sum_{\pi \in P_2(\ell)} \prod_{i=1}^{\frac{\ell}{2}} \lim_{n\tends \infty} \E[\eta_{p_{y(i)}} (t_{y(i)})  \eta_{p_{z(i)}} (t_{z(i)})].
	\end{equation}
	Therefore using Wick's formula and Theorem \ref{thm:symcovar_sp},  from \eqref{eqn:product of expectation_sp} we get 
	\begin{align*}
	\lim_{ \substack{{n\to\infty}  \\ {n \mbox{ odd} } }} \E[\eta_{p_1}(t_1)\eta_{p_2}(t_2) \cdots \eta_{p_\ell}(t_\ell)]
	&=\E[ N_{p_1}(t_1)N_{p_2}(t_2) \cdots N_{p_\ell}(t_\ell) ].
	\end{align*} 
Now suppose $n$ is even. Then by using trace formula (\ref{trace,formula_sp}), we get
  \begin{align*}
	\eta_{p_k}(t_k) & = \frac{1}{n^{\frac{p_k -1}{2}}} \sum_{d_k=0}^{p_k}\binom{p_k}{d_k}  \Big[ y_{d_k}  \sum_{A_{d_k}} b_{J_{d_k}}(t_k) + \tilde{y}_{d_k} \sum_{ \tilde{A}_{d_k}}  b_{J_{d_k}}(t_k)  - \E [y_{d_k}  \sum_{A_{d_k}} b_{J_{d_k}}(t_k) + \tilde{y}_{d_k} \sum_{ \tilde{A}_{d_k}}  b_{J_{d_k}}(t_k) ] \Big] \\
	 & = \frac{1}{n^{\frac{p_k -1}{2}}} \sum_{d_k=0}^{p_k}\binom{p_k}{d_k}  \Big[ \sum_{A_{d_k}} y_{d_k} b_{J_{d_k}}(t_k) - \E[y_{d_k} b_{J_{d_k}}(t_k) ] +  \sum_{ \tilde{A}_{d_k}} \tilde{y}_{d_k} b_{J_{d_k}}(t_k)  - \E [\tilde{y}_{d_k} b_{J_{d_k}}(t_k)] \Big],\\
	\end{align*}
  where 
\begin{align*}
y_{d_k} &=  {[b_0(t_k) +  b_{\frac{n}{2}} (t_k)]}^{p_k-d_k} + {[b_0(t_k) -  b_{\frac{n}{2}} (t_k)]}^{p_k-d_k},\\
 \tilde{y}_{d_k} &= {[b_0(t_k) +  b_{\frac{n}{2}} (t_k)]}^{p_k-d_k} - {[b_0(t_k) -  b_{\frac{n}{2}} (t_k)]}^{p_k-d_k}.
\end{align*}  
  Now from the above expression of $\eta_{p_k}(t_k)$, we have
	\begin{align}\label{eqn:expectation_thm2,even_sp} 
	 \lim_{n\tends \infty}   \E[\eta_{p_1}(t_1)\eta_{p_2}(t_2) \cdots \eta_{p_\ell}(t_\ell)] 
	&= \lim_{n\tends \infty}  \frac{1}{n^{\frac{p_1 + p_2 + \cdots +p_\ell -\ell}{2}}} \sum_{d_1=0}^{p_1} \cdots \sum_{d_\ell=0}^{p_\ell} \binom{p_1}{d_1} \cdots \binom{p_\ell}{d_\ell} \E\Big[ \prod_{k=1}^{\ell} \Big(\nonumber\\ 
	& \qquad  \sum_{A_{d_k}} y_{d_k} b_{J_{d_k}}(t_k) - \E[ y_{d_k} b_{J_{d_k}}(t_k) ]  +  \sum_{ \tilde{A}_{d_k}} \tilde{y}_{d_k} b_{J_{d_k}}(t_k)  - \E [\tilde{y}_{d_k} b_{J_{d_k}} (t_k)] \Big) \Big]\nonumber \\
	& = \lim_{n\tends \infty}  \frac{1}{n^{\frac{p_1 + p_2 + \cdots +p_\ell -\ell}{2}}} \sum_{A_{p_1}, \ldots, A_{p_\ell}} \E\Big[\prod_{k=1}^{\ell}\Big(b_{J_k}(t_k) - \E(b_{J_k}) (t_k)\Big)\Big],
	\end{align}
	where the last equality follows from Lemma \ref{lem:cluster,decompose_A_sp} and Remark \ref{rem:cluster,decompose_F_sp}. Since (\ref{eqn:expectation_thm2,even_sp}) is same as (\ref{eqn:expectation_thm2_sp}), therefore by the the similar calculation as we have done for $n$ odd, we get
	$$\lim_{ \substack{{n\to\infty}  \\ {n \mbox{ even} } }} \E[\eta_{p_1}(t_1)\eta_{p_2}(t_2) \cdots \eta_{p_\ell}(t_\ell)]
	=\E[ N_{p_1}(t_1)N_{p_2}(t_2) \cdots N_{p_\ell}(t_\ell) ].$$
This completes the proof of Theorem \ref{thm:symmulti_sp}.
\end{proof}

\section{Proof of Theorem \ref{thm:symprocess}}\label{sec:process convergence_sp} 

The idea of the proof 
is similar to the proof of Theorem \ref{thm:revprocess}. So it is enough to prove Proposition \ref{pro:finite convergence_rp} and Proposition \ref{pro:tight_rp} for the process $\{\eta_p(t);t\ge 0\}$.

\begin{proof}[Proof of Theorem \ref{thm:symprocess}] The finite dimensional joint fluctuation of $\{\eta_p(t);t \geq 0\}$ follows from Theorem \ref{thm:symmulti_sp} once we take $p_i=p$. So, to finish the proof of Theorem \ref{thm:symprocess}, it is sufficient to prove Proposition \ref{pro:tight_rp} for the process $\{\eta_p(t);t \geq 0\}$. 
For that, we first note $\eta_p(0)=0$ and we shall show \eqref{eq:tight4_rp} for $\{\eta_p(t);t\ge 0\}$. In particular, we show 
\begin{equation*}
\E{|\eta_{p}(t)- \eta_{p}(s)|^ 4 } \leq M_T |t-s|^2 \ \ \ \forall \ n \in \mathbb{N} \ \mbox{and } t,s \in[0,\ T],
\end{equation*}
where $M_T$ is a positive constant which depends only on $p$ and $T$.

First note from the definition of $\eta_{p}(t)$ in (\ref{eqn:w_p(t)_sp}) 
\begin{align} \label{eqn:w_p(t-s)_sp}
 \eta_{p}(t) - \eta_{p}(s) &= \frac{1}{\sqrt{n}} \big[ \Tr(SC_n(t))^{p} - \Tr(SC_n(s))^{p} - \E[\Tr(SC_n(t))^{p} - \Tr(SC_n(s))^{p}]\big].
\end{align}
 Using binomial expansion for  $0< s <t $, we get
\begin{equation} \label{eqn:trace t-s_sp}
 \Tr (SC_n(t))^{p} - \Tr (SC_n(s))^{p}   
 = \Tr[ (SC'_n(t-s) )^{p}] + \sum_{d=1}^{p-1} \binom{p}{d} \Tr [(SC'_n(t-s))^d (SC_n(s))^{p-d}],
\end{equation}
where $SC'_n(t-s)$ is a symmetric circulant matrix with entries $\{ b_n(t) -b_n(s)\}_{n \geq 0}$. 

Now for odd $n$, from the trace formula (\ref{trace,formula_sp}) and (\ref{trace,product_sp}), we get
\begin{align} \label{eqn:tracemultiply_odd_sp}
	\Tr[ (SC'_n(t-s) )^{p}] & =\frac{1}{n^{\frac{p}{2}-1}} \sum_{k=0}^p \binom{p}{k} (b'_{0}(t-s))^{p-k} \sum_{A_{k}} b'_{I_{k}}(t-s),  \\
	\Tr [(SC'_n(t-s))^d (SC_n(s))^{p-d}] & = \frac{1}{n^{\frac{p}{2}-1}} \sum_{m=0}^d \sum_{r=0}^{p-d} \binom{d}{m} \binom{p-d}{r} (b'_{0}(t-s))^{d-m} (b_{0}(s))^{(p-d)-r} \nonumber \\
	 & \qquad \sum_{A_{m+r}} b'_{J_{m}}(t-s) b_{J_{r}}(s), \nonumber
\end{align}
where 
\begin{align*}
b'_{J_{k}}(t-s) & = (b_{j_1}(t)-b_{j_1}(s)) (b_{j_2}(t) - b_{j_2}(s)) \cdots (b_{j_{k}}(t) - b_{j_{k}}(s)), \\
b'_{J_{m}}(t-s) & = (b_{j_1}(t)-b_{j_1}(s)) (b_{j_2}(t) - b_{j_2}(s)) \cdots (b_{j_{m}}(t) - b_{j_{m}}(s)), \\
b_{J_{r}}(s) & = b_{j_{m+1}}(s) b_{j_{m+2}}(s)\cdots b_{j_{m+r}}(s).
\end{align*}
Note that, $b_n(t) -b_n(s)$ has same distribution as $b_n(t-s)$ and  $b_n(t)$ has same distribution as $N(0, \sqrt{t})$. Therefore
\begin{align}  \label{eqn:same distrib_odd}
b'_{J_{k}}(t-s) & \distas{D} x_{j_1} x_{j_2} \cdots x_{j_{k}} \distas{D} (t-s)^{\frac{k}{2}} \frac{x_{j_1}}{\sqrt{(t-s)}} \frac{x_{j_2}}{\sqrt{(t-s)}} \cdots  \frac{x_{j_{k}}} {\sqrt{(t-s)}} = (t-s)^{\frac{k}{2}} X_{J_{k}}, \mbox{ say, } \\
b'_{I_{m}}(t-s)  & \distas{D} (t-s)^{\frac{m}{2}} \frac{x_{j_1}}{\sqrt{(t-s)}} \frac{x_{j_2}}{\sqrt{(t-s)}} \cdots  \frac{x_{j_{m}}} {\sqrt{(t-s)}} = (t-s)^{\frac{m}{2}} X_{J_{m}}, \mbox{ say,} \nonumber \\
b_{J_{r}}(s) & \distas{D} y_{j_{m+1}} y_{j_{m+2}} \cdots y_{j_{m+r}} \distas{D} {s}^{\frac{r}{2}} \frac{y_{j_{m+1}}}{\sqrt{s}} \frac{y_{j_{m+2}}}{\sqrt{s}} \cdots  \frac{y_{j_{m+r}}} {\sqrt{s}} = {s}^{\frac{r}{2}} Y_{J_r}, \mbox{ say, } \nonumber
\end{align}
where $x_{i_r}$'s and $y_{i_r}$'s are independent normal random variable with mean zero, variance $(t-s)$ and $s$, respectively.  

Now, by using (\ref{eqn:tracemultiply_odd_sp}) and (\ref{eqn:same distrib_odd}) in (\ref{eqn:trace t-s_sp}), we get
\begin{align} \label{eqn:Tr_t-s_odd,1}
 \Tr (SC_n(t))^{p}- \Tr (SC_n(s))^{p}  & \distas{D}  \frac{1}{n^{\frac{p}{2}-1}} \Biggl\{ \sum_{k=0}^p  (t-s)^{\frac{p-k}{2} + \frac{k}{2}} \binom{p}{k} x_{0}^{p-k} \sum_{A_{k}} X_{J_{k}} + \sum_{d=1}^{p-1} \binom{p}{d}  \Big[ \sum_{m=0}^d \sum_{r=0}^{p-d} \binom{d}{m} \binom{p-d}{r} \nonumber \\
 & \qquad (t-s)^{\frac{d-m}{2} + \frac{m}{2}} {s}^{\frac{p-d-r}{2} + \frac{r}{2}}  x_0^{d-m} y_0^{p-d-r} \sum_{A_{m+r}} X_{J_{m}}  Y_{J_{r}} \Big] \Biggr\} \nonumber \\
 & = \frac{\sqrt{t-s}}{n^{\frac{p}{2}-1} } \Biggl\{ \sum_{k=0}^p  (t-s)^{\frac{p-1}{2}} \binom{p}{k} x_{0}^{p-k} \sum_{A_{k}} X_{J_{k}} + \sum_{d=1}^{p-1} \binom{p}{d}  \Big[ \sum_{m=0}^d \sum_{r=0}^{p-d} \binom{d}{m} \binom{p-d}{r}  \\
 & \qquad (t-s)^{\frac{d-1}{2}} {s}^{\frac{p-d}{2}}  x_0^{d-m} y_0^{p-d-r} \sum_{A_{m+r}} X_{J_{m}}  Y_{J_{r}} \Big] \Biggr\}. \nonumber 
\end{align} 
Since $x_{i_r}$'s and $y_{i_r}$'s are independent normal random variable and $A_k$ has a constraint, (\ref{def:A_p_sp}), therefore
\begin{align} \label{eqn:E,o(1)_odd}
\E \Big( \frac{1}{n^{\frac{p}{2}-1}} x_{0}^{p-k} \sum_{A_{k}} X_{J_{k}} \Big)
&= \left\{\begin{array}{ll} 
		 	O(1) & \text{if} \   k=p  \\\\
		 	 o(1) & \text{otherwise}, 	 
		 	  \end{array}\right. \\\
 \E \Big( \frac{1}{n^{\frac{p}{2}-1}} x_0^{d-m} y_0^{p-d-r} \sum_{A_{m+r}} X_{J_{m}}  Y_{J_{r}}  \Big)
&= \left\{\begin{array}{ll} 
		 	O(1) & \text{if} \   m+r=p  \\\\
		 	 o(1) & \text{otherwise}. 
		 	  \end{array}\right. \nonumber
\end{align}
Hence on using (\ref{eqn:E,o(1)_odd}) in (\ref{eqn:Tr_t-s_odd,1}), we get
\begin{align} \label{eqn:W_p,odd_sp}
 [\Tr (SC_n(t))^{p} - \Tr (SC_n(s))^{p}] \mathbb I_{\{ n \ odd\}} & \distas{D}  
  \sqrt{t-s} \Big[  \frac{ (t-s)^{\frac{p-1}{2}}} {n^{\frac{p}{2}-1 }}  \sum_{A_{p}} X_{J_{p}} + \frac{1}{n^{\frac{p}{2}-1}} \sum_{d=1}^{p-1} \binom{p}{d} \\
 & \qquad (t-s)^{\frac{d-1}{2}} {s}^{\frac{p-d}{2}} \sum_{A_{p}} X_{J_{d}}  Y_{J_{p-d}} + \theta_p \Big], \nonumber 
\end{align} 
where $\E[ \theta_p]=o(1).$ 

Now for even value of $n$, from the trace formula (\ref{trace,formula_sp}) and (\ref{trace,product_sp}), we get
\begin{align} \label{eqn:tracemultiply_even_sp}
	\Tr[ (SC'_n(t-s) )^{p}] & = \frac{1}{2n^{\frac{p}{2}-1}} \sum_{k=0}^{p}\binom{p}{k} \Big[ R_k  \sum_{ A_k} b'_{I_{k}}(t-s) + \tilde{R}_k \sum_{ \tilde{A}_k}  b'_{I_{k}}(t-s) \Big], \\
	\Tr [(SC'_n(t-s))^d (SC_n(s))^{p-d}] & = \frac{1}{2n^{\frac{p}{2}-1}} \sum_{m=0}^d \sum_{r=0}^{p-d} \binom{d}{m} \binom{p-d}{r} \Big[ Q_{m,r} \sum_{ A_{m+r}} b'_{J_{m}}(t-s)b_{J_{r}}(s) \nonumber\\
	& \qquad + \tilde{Q}_{m,r} \sum_{ \tilde{A}_{m+r}}  b'_{J_{m}}(t-s)b_{J_{r}}(t-s)  \Big],	\nonumber
\end{align}
where  
\begin{align*}
R_{k} &=  {[b_0(t-s) +  b_{\frac{n}{2}} (t-s)]}^{p-k} + {[b_0(t-s) -  b_{\frac{n}{2}} (t-s)]}^{p-k}, \\
 \tilde{R}_{k} &= {[b_0(t-s) +  b_{\frac{n}{2}} (t-s)]}^{p-k} - {[b_0(t-s) -  b_{\frac{n}{2}}) (t-s]}^{p-k}, \nonumber \\
 Q_{m,r} & = {[b_0(t-s) +  b_{\frac{n}{2}} (t-s)]}^{d-m} {[b_0(s) +  b_{\frac{n}{2}} (s)]}^{(p-d)-r}+  {[b_0(t-s) -  b_{\frac{n}{2}} (t-s)]}^{d-m} {[b_0(s) -  b_{\frac{n}{2}} (s)]}^{(p-d)-r}, \\
\tilde{Q}_{m,r} &={[b_0(t-s) +  b_{\frac{n}{2}} (t-s)]}^{d-m} {[b_0(s) +  b_{\frac{n}{2}} (s)]}^{(p-d)-r}-  {[b_0(t-s) -  b_{\frac{n}{2}} (t-s)]}^{d-m} {[b_0(s) -  b_{\frac{n}{2}} (s)]}^{(p-d)-r},\\
b'_{J_{k}}(t-s) & = (b_{j_1}(t)-b_{j_1}(s)) (b_{j_2}(t) - b_{j_2}(s)) \cdots (b_{j_{k}}(t) - b_{j_{k}}(s)), \\
b'_{J_{m}}(t-s) & = (b_{j_1}(t)-b_{j_1}(s)) (b_{j_2}(t) - b_{j_2}(s)) \cdots (b_{j_{m}}(t) - b_{j_{m}}(s)), \\
b_{J_{r}}(s) & = b_{j_{m+1}}(s) b_{j_{m+2}}(s)\cdots b_{j_{m+r}}(s).
\end{align*}
Using (\ref{eqn:tracemultiply_even_sp}) and (\ref{eqn:same distrib_odd}) in (\ref{eqn:trace t-s_sp}), we get
\begin{align} \label{eqn:Tr,t-s_even,1}
 \Tr (SC_n(t))^{p}- \Tr (SC_n(s))^{p}  & \distas{D}  
  \frac{\sqrt{t-s}}{2n^{\frac{p}{2}-1} } \Biggl\{ \sum_{k=0}^p  (t-s)^{\frac{p-1}{2}} \binom{p}{k} \Big[ R'_k \sum_{ A_k}  X_{J_{k}} + \tilde{R'}_k \sum_{ \tilde{A}_k}   X_{J_{k}} \Big]  \\
 &\qquad + \sum_{d=1}^{p-1} \binom{p}{d}  \Big[ \sum_{m=0}^d \sum_{r=0}^{p-d} \binom{d}{m} \binom{p-d}{r}  
  (t-s)^{\frac{d-1}{2}} {s}^{\frac{p-d}{2}}  \nonumber \\
 & \qquad \Big( Q'_{m,r} \sum_{ A_{m+r}} X_{J_{m}}Y_{J_{r}}  + \tilde{Q'}_{m,r} \sum_{ \tilde{A}_{m+r}}  X_{J_{m}} Y_{J_{r}}  \Big)  \Big] \Biggr\}, \nonumber 
\end{align} 
where 
\begin{align*}
R_k & \distas{D}  (t-s)^{\frac{p-k}{2}} R'_k , \ \ \tilde{R}_k \distas{D} (t-s)^{\frac{p-k}{2}} \tilde{R'}_k, \\
 Q_{m,r} & \distas{D}  (t-s)^{\frac{d-m}{2}} (s)^{\frac{p-d-r}{2}} Q'_{m,r},
\ \ \tilde{Q}_{m,r} \distas{D} (t-s)^{\frac{d-m}{2}} (s)^{\frac{p-d-r}{2}} \tilde{Q'}_{m,r}. 
\end{align*}
Observe from the definition of $A_k$ and $\tilde{A}_k$, (\ref{def:A_p_sp}) and (\ref{eqn:A,tildeA}), respectively that if $F_i=A_i$ or $\tilde{A}_i$, then
\begin{align} \label{eqn:E,o(1)_even}
\E \Big( \frac{1}{n^{\frac{p}{2}-1}} \sum_{F_{k}} X_{J_{k}} \Big)
&= \left\{\begin{array}{ll} 
		 	O(1) & \text{if} \   k=p  \mbox{ and } F_k=A_k\\\\
		 	 o(1) & \text{otherwise}, 	 
		 	  \end{array}\right. \\\
 \E \Big( \frac{1}{n^{\frac{p}{2}-1}} \sum_{F_{m+r}} X_{J_{m}}  Y_{J_{r}}  \Big)
&= \left\{\begin{array}{ll} 
		 	O(1) & \text{if} \   m+r=p  \mbox{ and } F_{m+r}=A_{m+r}  \\\\
		 	 o(1) & \text{otherwise}. 	 
		 	  \end{array}\right. \nonumber
\end{align}
So, by using (\ref{eqn:E,o(1)_even}) in (\ref{eqn:Tr,t-s_even,1}), we get
\begin{align} \label{eqn:W_p,even_sp}
 [\Tr (SC_n(t))^{p}- \Tr (SC_n(s))^{p}] \mathbb I_{\{ n \ even\}} & \distas{D}  
  \sqrt{t-s} \Big[  \frac{ (t-s)^{\frac{p-1}{2}}} {n^{\frac{p}{2}-1 }}  \sum_{A_{p}} X_{J_{p}} + \frac{1}{n^{\frac{p}{2}-1}} \sum_{d=1}^{p-1} \binom{p}{d} \\
& \qquad  (t-s)^{\frac{d-1}{2}} {s}^{\frac{p-d}{2}} \sum_{A_{p}} X_{J_{d}}  Y_{J_{p-d}} + \theta'_p \Big], \nonumber
\end{align} 
where $\E[ \theta'_p]=o(1).$ 
Now on combining (\ref{eqn:W_p,odd_sp}) and (\ref{eqn:W_p,even_sp}), we get
\begin{align} \label{eqn:W_p_sp}
 \Tr (SC_n(t))^{p}- \Tr (SC_n(s))^{p}  & \distas{D}  
  \sqrt{t-s} \Big[  \frac{ (t-s)^{\frac{p-1}{2}}} {n^{\frac{p}{2}-1 }}  \sum_{A_{p}} X_{J_{p}} + \frac{1}{n^{\frac{p}{2}-1}} \sum_{d=1}^{p-1} \binom{p}{d} 
  (t-s)^{\frac{d-1}{2}} {s}^{\frac{p-d}{2}} \sum_{A_{p}} X_{J_{d}}  Y_{J_{p-d}} + \phi_p \Big], \nonumber \\
 & = \sqrt{t-s} \Big[ \frac{1}{n^{\frac{p}{2}-1 }} \sum_{A_{p}} W_{I_{p}} +\phi_p\Big], \mbox{ say},
\end{align} 
where $\E[ \phi_p]=o(1).$

Finally, by using (\ref{eqn:W_p_sp}) in (\ref{eqn:w_p(t-s)_sp}), we get
\begin{align*}
 \eta_{p}(t) - \eta_{p}(s) \distas{D} \sqrt{t-s} \Big[ \frac{1}{n^{\frac{p-1}{2} }} \sum_{A_{p}} ( W_{I_{p}} -\E[W_{I_{p}}])+\phi_p- \E[\phi_p]\Big],
\end{align*}
and therefore 
\begin{align} \label{eqn:w_p^4_sp}
\E[\eta_{p}(t) - \eta_{p}(s)]^4 & = (t-s)^2\Bigl( \frac{1}{n^{2p-2 }} \sum_{A_{p}, A_{p}, A_{p}, A_{p}} \E \Big[ \prod_{k=1}^4 (W_{J^k_{p}} - \E[ W_{J^k_{p}}] ) \Big] +o(1) \Bigr),
\end{align}
where $J^k_{p}$ are vectors from $A_{p}$, for each $k=1, 2, 3, 4$. Depending on connectedness between $J^k_{p}$'s, 
the following three cases arise:
\vskip5pt
\noindent \textbf{Case I.} \textbf{At least of $J^k_{p}$ for $k=1,2,3,4$, is not connected with the remaining ones:}  By arguments similar to those in Case I of proof of Theorem \ref{thm:revprocess}, we get
$$\E \Big[ \prod_{k=1}^4 (Z_{J^k_{p}} - \E Z_{J^k_{p}}) \Big]= 0,$$ 
 and hence there exists $M_1 >0$, depending only on $p, T$ such that 
\begin{align} \label{case I,(t-s)_sp}
\E[\eta_{p}(t) -\eta_{p}(s)]^4 \leq M_1 (t-s)^2 \ \mbox{ for all }n\geq 1.
\end{align} 
\vskip5pt
 \noindent \textbf{Case II.} \textbf{$J^1_{p}$ is connected with one of $J^2_{p}, J^3_{p}, J^4_{p}$ only and the remaining two of $J^2_{p}, J^3_{p}, J^4_{p}$  are also connected with themselves only:} Without loss of generality, we assume $J^1_{p}$ is connected with $J^2_{p}$ only and $J^3_{p}$ is connected with $J^4_{p}$ only.
 Therefore from independence, the right side of (\ref{eqn:w_p^4_sp}) will be
\begin{align} \label{eqn:case II_sp} 
\frac{(t-s)^2}{n^{2p-2}} &\sum_{ A_{p}, A_{p}} \E \big[ (W_{J^1_{p}} - \E W_{J^1_{p}}) (W_{J^2_{p}} - \E W_{J^2_{p}}) \big]  \\ 
& \qquad \qquad \qquad \qquad \qquad \sum_{A_{p}, A_{p}} \E \big[ (W_{J^3_{p}} - \E W_{J^3_{p}}) (W_{J^4_{p}} - \E W_{J^4_{p}}) \big] + (t-s)^2 o(1) \nonumber.
\end{align}
We denote the above expression by $S_2$. Now by the arguments as given in Case II of proof of Theorem \ref{thm:revprocess}, we get
\begin{align} \label{eqn:case2, B_2}
\sum_{ A_{p}, A_{p}} \Big|\E \big[ (W_{J^1_{p}} - \E W_{J^1_{p}}) (W_{J^2_{p}} - \E W_{J^2_{p}}) \big] \Big| \leq \sum_{( J^1_{p}, J^2_{p}) \in B_{P_2}} \tilde{\alpha} =  \tilde{\alpha} |B_{P_2}|,
\end{align} 
where $B_{P_2}$ is as defined in Definition \ref{def:B_{P_l}_sp}. Note that
\begin{align*}B_{P_2}=\{( J^1_{p}, J^2_{p}) \in  A_{p} \times A_{p} &: S_{J^1_{p}} \cap S_{J^2_{p}} \neq \emptyset \mbox{ and each entries of } S_{J^1_{p}} \cup S_{J^2_{p}}\\
&\quad \mbox{ has multiplicity greater than or equal to two}\}.
\end{align*}
Therefore
\begin{align*}
 |B_{P_2}|  & = \left\{\begin{array}{ccc} 	 
		 	 O({n}^{p-1}) & \text{if}&  p \mbox{ and } |S_{J^1_{p}} \cap S_{J^2_{p}}| \mbox{ both are even or both are odd}, \\
			o({n}^{p-1}) & \text{otherwise},& 	 	 
		 	  \end{array}\right.
\end{align*}
and hence $|B_{P_2}|= O({n}^{p-1})$. So, if we use the above observation in (\ref{eqn:case2, B_2}), we get
\begin{align} \label{eqn:W_I,J_sp}
\sum_{ A_{p}, A_{p}} \Big|\E \big[(W_{J^1_{p}} - \E W_{J^1_{p}}) (W_{J^2_{p}} - \E W_{J^2_{p}}) \big] \Big| \leq \tilde{\alpha} O({n}^{p-1}).
\end{align} 
Finally from (\ref{eqn:W_I,J_sp}) and (\ref{eqn:case II_sp}), we get
\begin{align*}
|S_2|\leq (t-s)^2 \tilde{\alpha}^2 O(1) +(t-s)^2 o(1) = (t-s)^2 [ \tilde{\alpha}^2 O(1)+ o(1)].
\end{align*}
Since $t, s \in [0,T]$,  there exist $M_2 >0$, depending only on $p, T$ such that 
\begin{align} \label{case II,(t-s)_sp}
|S_2|\leq M_2 (t-s)^2 \ \mbox{ for all }n\geq 1.
\end{align} 
\vskip5pt
 \noindent \textbf{Case III.} \textbf{$J^1_{p}, J^2_{p}, J^3_{p}, J^4_{p}$ forms a cluster:}  
In this case we get
\begin{align*}
\sum_{ A_{p}, A_{p}, A_{p}, A_{p}} & \E \big[ \prod_{k=1}^4 (W_{J^k_{p}} - \E W_{J^k_{p}}) \big] 
 = \sum_{( J^1_{p}, J^2_{p}, J^3_{p}, J^4_{p}) \in B_{P_4}} \E \big[ \prod_{k=1}^4 (W_{J^k_{p}} - \E W_{J^k_{p}}) \big],
\end{align*}
 where $B_{P_4}$ as in Definition \ref{def:B_{P_l}_sp} for $\ell=4$ and $p_i=p$ for $i=1,2,3,4$. By the similar arguments as given  in Case II, there exists $\beta>0$ such that 
\begin{align*}
\sum_{ A_{p}, A_{p}, A_{p}, A_{p}} \Big|\E\big[\prod_{k=1}^4 (W_{J^k_{p}} - \E W_{J^k_{p}})\big]  \Big| & \leq \sum_{( J^1_{p}, J^2_{p}, J^3_{p}, J^4_{p}) \in B_{P_4}} \tilde{\beta}
 = \tilde{\beta} |B_{P_4}|.
\end{align*} 
Since from Result \ref{res:|B_{P_l}|_sp}, we have $|B_{P_4}| = o({b_n}^{2p-2})$, and therefore  
 \begin{align} \label{eqn:case III,W_sp}
\frac{(t-s)^2}{n^{2p-2}}  \sum_{ A_{p}, A_{p}, A_{p}, A_{p}} \Big|\E\big[ \prod_{k=1}^4 (W_{J^k_{p}} - \E W_{J^k_{p}})\big] \Big| &\leq \frac{(t-s)^2}{n^{2p-2}}  \tilde{\beta} o(n^{2p-2})\nonumber\\
& = \tilde{\beta} (t-s)^2  o(1).
 \end{align}
Now using (\ref{eqn:case III,W_sp}) in (\ref{eqn:w_p^4_sp}), we get
\begin{align*}
\E[\eta_{p}(t) -\eta_{p}(s)]^4 \leq \tilde{\beta} (t-s)^2 o(1)+ (t-s)^2 o(1)= (t-s)^2[\tilde{\beta} o(1)+ o(1)],
\end{align*}
and hence there exist $M_3 >0$, depending only on $p, T$ such that 
\begin{align} \label{case III,(t-s)_sp}
\E[\eta_{p}(t) -\eta_{p}(s)]^4 \leq M_3 (t-s)^2\  \mbox{ for all }n\ge 1.
\end{align}
Now on combining \eqref{case I,(t-s)_sp}, \eqref{case II,(t-s)_sp} and \eqref{case III,(t-s)_sp}, we get that there exist a positive constant $M_T$, depending only on $p, T$ such that
\begin{align} \label{eqn:M_T_sp}
\E[\eta_{p}(t) -\eta_{p}(s)]^4 & \leq M_T (t-s)^2 \ \ \ \forall \ n \in \mathbb{N} \ \mbox{and } t,s \in[0,\ T]. 
\end{align}
This complete the proof of Theorem \ref{thm:symprocess}.
\end{proof}

\begin{remark} 
	Note that the constant $M_T$ of (\ref{eqn:M_T_sp}) depends on $p$. So there is a possibility that it tends to infinity as $p \tends \infty$. Therefore, from Theorem \ref{thm:symprocess} we cannot conclude anything about the process convergence of $\{ \eta_{p}(t) ; t \geq 0, p \geq 2\}$. But the proof of Theorem \ref{thm:symprocess} yields
		\begin{equation*}
	\{ \eta_p(t) ; t \geq 0, 2 \leq p \leq N\} \stackrel{\mathcal D}{\rightarrow} \{N_p(t) ; t \geq 0 , 2 \leq p \leq N\}, 
	\end{equation*}  
	for any fixed $N \in \mathbb{N}$.
\end{remark}

\begin{remark} \label{rem:poly_test_function_sp}
For a given real polynomial $Q(x)=\sum_{k=1}^da_kx^{k}$
with degree $d$, $d\geq 2$. If we define
$$\eta_Q(t) := \frac{1}{\sqrt{n}} \bigl\{ \Tr(Q(SC_n(t))) - \E[\Tr(Q(SC_n(t)))]\bigr\}.$$
Then we can extend our results, Theorem \ref{thm:symcovar_sp}, \ref{thm:symmulti_sp} and \ref{thm:symprocess} for $\eta_Q(t)$. The proofs will be similar to the proofs of Theorem \ref{thm:symcovar_sp}, \ref{thm:symmulti_sp} and \ref{thm:symprocess}, respectively.
\end{remark}

\bibliographystyle{amsplain}

\providecommand{\bysame}{\leavevmode\hbox to3em{\hrulefill}\thinspace}
\providecommand{\MR}{\relax\ifhmode\unskip\space\fi MR }
\providecommand{\MRhref}[2]{%
  \href{http://www.ams.org/mathscinet-getitem?mr=#1}{#2}
}
\providecommand{\href}[2]{#2}

\end{document}